\documentclass[11pt,leqno,oneside]{amsart}
\usepackage{layout}
\usepackage{mathrsfs,dsfont}
\usepackage{amsmath,amstext,amsthm,amssymb,bbm,color}
\usepackage{comment}
\usepackage{charter}
\usepackage{typearea}
\usepackage{pdfsync}
\usepackage[width=6.5in,height=8.7in]{geometry}

\def\bt{\begin{thm}}
\def\et{\end{thm}}
\def\bl{\begin{lem}}
\def\el{\end{lem}}
\def\bd{\begin{defn}}
\def\ed{\end{defn}}
\def\bc{\begin{cor}}
\def\ec{\end{cor}}
\def\bp{\begin{proof}}
\def\ep{\end{proof}}
\def\br{\begin{rem}}
\def\er{\end{rem}}

\newtheorem{thm}{Theorem}[section]
\newtheorem{prop}[thm]{Proposition}
\newtheorem{lem}[thm]{Lemma}
\newtheorem{defn}[thm]{Definition}

\newtheorem{rem}[thm]{Remark}
\newtheorem{cor}[thm]{Corollary}

\numberwithin{equation}{section}

\newcommand{\bthm}{\begin{thm}}
\newcommand{\ethm}{\end{thm}}
\newcommand{\bstp}{\begin{stp}}
\newcommand{\estp}{\end{stp}}
\newcommand{\blemma}{\begin{lemma}}
\newcommand{\elemma}{\end{lemma}}
\newcommand{\bprop}{\begin{prop}}
\newcommand{\eprop}{\end{prop}}
\newcommand{\bpf}{\begin{pf}}
\newcommand{\epf}{\end{pf}}
\newcommand{\bdefn}{\begin{defn}}
\newcommand{\edefn}{\end{defn}}
\newcommand{\brk}{\begin{rmrk}}
\newcommand{\erk}{\end{rmrk}}
\newcommand{\bcrl}{\begin{crl}}
\newcommand{\ecrl}{\end{crl}}

\title[EQUIDISTRIBUTION]{Random systems of holomorphic sections of a sequence of line bundles on compact K\"{a}hler manifolds}

\address{Faculty of Engineering and Natural Sciences, Sabanc{\i} University, \.{I}stanbul, Turkey}

\email{afrimbojnik@sabanciuniv.edu}

\email{ozangunyuz@sabanciuniv.edu}

\keywords{Random holomorphic sections, equidistribution of zeros, compact Kähler manifolds, zero currents, universality, variance estimate.}
\subjclass[2020]{Primary: 32A60, 60D05, 32U40, 58A25, 41A65 Secondary: 81Q50}

\begin{document}
\author{Afrim Bojnik and Ozan Günyüz}
\thanks{A.\ Bojnik was partially supported by T\"{U}B\.{I}TAK grant ARDEB-3501/118F049 and Tosun Terzioğlu Postdoctoral Fellowship.}

\begin{abstract}

This paper primarily establishes an asymptotic variance estimate for smooth linear statistics associated with zero sets of systems of random holomorphic sections in a sequence of positive Hermitian holomorphic line bundles on a compact Kähler manifold $(X, \omega)$ in a general non-Gaussian setting. Using this variance estimate and the expected distribution, we derive an equidistribution result for zeros of these random systems, which proves that the smooth positive closed form $\omega^{k}$ can be approximated by currents of integration along analytic subsets of $X$ of codimension $k$, $k \in \{1, \ldots, n\}$. The probability measures taken into consideration in this paper are sufficiently general to include a wide range of the measures commonly encountered in the literature, for which we give equidistribution results at the end, such as the standard Gaussian measure, Fubini-Study measure, the area measure of spheres, probability measures whose distributions have bounded densities with logarithmic decaying tails and locally moderate measures among others.

\end{abstract}
\maketitle

%%%%%%%%%%%%%%%%%%%%%%%%%%%%%%%%%%%%%%%%%%%%%%%%%%%%%%%%%%%%%%%%%%%%%%%%%%%%%%%

\section{Introduction}

In recent years, the equidistribution and statistical behavior of zeros of random holomorphic sections have been studied extensively. There are numerous results as to the distribution of zeros of holomorphic sections in diverse probabilistic backgrounds. Amongst these, what has been more largely focused on is the tensor powers of a given positive Hermitian line bundle over a compact (or non-compact) Kähler manifold within a Gaussian setting. In this background, \cite{SZ99} is one of the first papers considering the equidistribution problem of (Gaussian) random holomorphic sections. In later studies \cite{SZ08, SZ10}, asymptotic variance formulas are obtained not only for linear statistics but for their smooth analogs as well, all within the same geometric and probabilistic context. \cite{SZ10} also deals with the central limit theorem in the above complex geometric background. In the present setting of a sequence of line bundles, the authors of the current paper also obtained a more general asymptotic normality inspired by \cite{SZ10}, see \cite{BG1} for more details. One of the most recent results, proved via the techniques of \cite{SZ10} in \cite{Shif1}, is the asymptotic expansion of the variance for the codimension \(1\) case in the aforementioned setting. This asymptotic expansion also shows that the coefficient of the first term in the expansion, which also appeared as the leading-order term in the asymptotic formula proved in \cite{SZ10}, is sharp. In this framework, Dinh and Sibony \cite{DS06} introduced a method from complex dynamics for analyzing zero distribution, and set convergence speed bounds in the compact case, enhancing Shiffman and Zelditch's initial results, namely \cite{SZ99}. More recent studies, such as those by Dinh, Marinescu, and Schmidt \cite{DMS12}, and Drewitz, Liu, and Marinescu \cite{DLM23}, \cite{DLM1} have extended the equidistribution problem to non-compact complex manifolds, providing significant insights into the behavior of zeros in these wider settings (see also \cite{BG24} for related results in this non-compact framework). Alongside the Gaussian setting, in the papers \cite{Bay16}, \cite{BCHM}, \cite{BloomL}, \cite{CLMM}, \cite{CM1}, \cite{CMM}, \cite{G1} and \cite{BCM}, more general scenarios are investigated, including the Gaussian case as a particular instance. For example, in \cite{BloomL}, the authors focus on the complex random variables that possess bounded distribution functions on the whole complex plane \( \mathbb{C} \) and outside of a very large disk with radius \( \rho \), its integral with respect to the two-dimensional Lebesgue measure has an upper bound depending on \( \rho \); the latter condition is called the tail-end estimate. Meanwhile, in \cite{BCM, CMM} the authors expand their research to the equidistribution problem within an extensive context, involving a sequence of Hermitian line bundles over a normal reduced complex Kähler space with singularities.

It is essential to highlight that global holomorphic sections are natural generalizations of polynomials. In relation to the general setting, there has been a great deal of interest in the statistical problems related to the zero sets of random polynomials in several variables in both real and complex domains.  For a comprehensive overview of results in this direction, interested readers can refer to \cite{BloomL, BloomS, BL05, BloomD, EDKO, ROJ, SHSM, HN08, IZ, Gnyz1} (and references cited therein). These sources cover a wide range of results including both Gaussian and non-Gaussian cases along with historical developments of the polynomial theory. Long before these developments, it is worth acknowledging the pioneering work of mathematicians such as Littlewood-Offord, Kac, Hammersley, and Erd\"{o}s-Turan, who were among the first to investigate the distribution of roots of random algebraic equations in a single real variable. For more insights into these foundational studies, interested readers can consult the papers \cite{KAC, LO, HAM56, ET}.

On the other hand, there is a growing body of physics literature addressing equidistribution and probabilistic problems related to the zeros of complex random polynomials. For studies of fundamental importance in this area, see, for instance, \cite{NV98}, which slightly predates \cite{SZ99} and can be regarded as a foundational work on the zeros of holomorphic sections in the specific context of theta bundles over an elliptic curve $\mathbb{C}/\mathbb{Z}^{2}$.

The present article follows a general setting akin to papers \cite{CLMM} and \cite{CMM}. In the recent work presented by Coman-Lu-Ma-Marinescu \cite{CLMM}, they establish an equidistribution result for a sequence of line bundles $(L_{p}, h_{p})$, instead of the tensor powers of a single line bundle $L$, i.e., $(L^{\otimes p}, h^{\otimes p})$, by imposing a natural convergence condition on the Chern curvature forms $c_{1}(L_{p}, h_{p}).$ As a probability measure they consider the  Fubini-Study measures and use the standard formalism of meromorphic transforms from complex dynamics, as introduced by Dinh and Sibony in \cite{DS06}.

 In contrast to their work, the novelty of the current paper lies in its reliance on the variance estimate of zero currents of integration over the zero sets of systems of random holomorphic  sections for any codimension. This classical but efficient approach allows for the extension of the previous results to a broader spectrum of probability distributions, generalizing Theorem 0.4 considered in \cite{CLMM} as well. It is worth noting that, in the context of sequences of line bundles, the variance estimate is proven for the first time in this work setting it apart from previous related results in the literature. Furthermore, our findings can be viewed as a form of universality result within a wide range of probability measures studied in the literature, including those with continuous distributions.

We consider a sequence of holomorphic line bundles \( (L_{p}, h_{p})_{p \geq 1} \) on a compact Kähler manifold \( (X, \omega) \) of complex dimension $n = \text{dim}_{\mathbb{C}} \, X$ with a complex structure $J$ on $X$, where $\omega$ is a fixed Kähler form on $X$ and $h_{p}$ are \( \mathscr{C}^{2} \) Hermitian metrics (see Section 2 below). We assume that the curvature forms \( c_{1}(L_{p}, h_{p}) \) satisfy the following \textit{diophantine approximation} condition:
\begin{itemize}
	\item [\textbf{(A)}] There exists a sequence \( A_{p} > 0 \) with \( \lim_{p \rightarrow \infty} A_{p} = +\infty \) and a constant \( a > 0 \) such that
	\begin{equation}\label{cur}
		\left \| \frac{1}{A_{p}} c_{1}(L_{p}, h_{p}) - \omega \right \|_{\mathscr{C}^{0}} = O(A_{p}^{-a}).
	\end{equation}
\end{itemize}
Let $g^{TX}(u, v)= \omega(u, Jv)$, $u, v \in TX$, be the Riemannian metric on $TX$ induced by $\omega$ and $J$. The $k$-volume form \( \mathrm{Vol}_k \) is expressed as \( \mathrm{Vol}_{k}(A) = \int_{A}{\omega^{n-k}} \). We suppress the subindex because it will be clear from the context which codimension is meant. We also remark that, unlike \cite{CLMM}, for the sake of simplifying our notation, we will not utilize another volume form on the base manifold $X$ besides $\omega$. Even though employing a different form than $\omega$ might alter the notation, it will not affect the equidistribution results of this paper. The adjustment mainly involves substituting the appropriate powers of $\omega$ with another form $\vartheta$ in the relevant parts, such as basic cohomology arguments and the total variation of the signed measure $dd^{c} \phi$ for a test form $\phi$.

In the standard setup of geometric quantization, one works with a compact Kähler manifold $X$ with a fixed K\"{a}hler form $\omega$ on it, denoted by $(X, \omega)$, equipped with a Hermitian holomorphic line bundle $(L, h)$, known as the prequantum line bundle, fulfilling the prequantization condition given by 
\begin{equation}\label{preq}\omega=\frac{\sqrt{-1}}{2\pi} R^{L}=c_{1}(L, h).\end{equation} 
Here $R^L$ is the curvature of the Chern connection on $L$, and $c_1(L,h)$ is the Chern curvature form of $(L,h)$. The prequantum line bundle $(L,h)$ provides the geometric framework for quantizing the Kähler manifold $(X,\omega)$. The corresponding space of quantum states is the Hilbert space $H^0(X,L)$. Different quantization levels are obtained by considering the tensor powers $(L^{\otimes p},h^{\otimes p})$, and the limit $p\to\infty$ corresponds to the semiclassical regime, with effective Planck constant $\hbar=1/p$.

The condition \eqref{preq} is an integrality condition on the Kähler class $[\omega]\in H^2(X,\mathbb{R})$. If $[\omega]$ is integral, i.e. if it lies in the image of the natural map $H^2(X,\mathbb{Z})\to H^2(X,\mathbb{R})$, then $\omega$ can be realized as the Chern form of a prequantum holomorphic line bundle. If $[\omega]$ is not integral, such a realization is not available in general. More generally, if one wants to approximate $[\omega]$ by normalized first Chern classes of holomorphic line bundles, then $[\omega]$ has to belong to the real Néron--Severi subspace; for a precise discussion, see Demailly \cite{Demailly2010}. Outside this subspace, one can in general only approximate using smooth complex line bundles, which are generally not holomorphic.

Accordingly, in the present paper, condition \eqref{cur} is taken as an assumption on the existence of a sequence of positive Hermitian holomorphic line bundles $(L_p,h_p)$ whose normalized Chern forms approximate $\omega$. This is the geometric setting considered in \cite{CLMM}. In this setting, \cite{CLMM} proves a full asymptotic expansion of the Bergman kernel restricted to the diagonal, and in the present work we prove an equidistribution result for the zeros of systems of random holomorphic sections. For the equidistribution problem, following \cite{CLMM}, we work in a setting as general as possible, where the metrics are of class $\mathscr{C}^{2}$ and the convergence of Chern forms in \eqref{cur} is in the $\mathscr{C}^{0}$-topology.

It is also important to note that, within the examples of sequences of line bundles $(L_{p}, h_{p})$ fulfilling the condition \eqref{cur}, one natural instance is $(L_{p}, h_{p})=(L^{\otimes p},h^{\otimes p})$ for some fixed prequantum line bundle $(L,h)$. Other examples include cases where $(L_{p}, h_{p})=(L^{\otimes p},h_{p})$ but here, $h_{p}$ is not necessarily the product metric $h^{p}$, e.g. $h_{p}=h^{p}e^{-\varphi_{p}}$ with appropriate weights $\varphi_{p}$. For examples involving tensor powers of several line bundles, see \cite{CLMM}.

We denote the vector space of global holomorphic sections of $L_p$ by $H^{0}(X, L_{p})$. We take into consideration the following inner product on the space of smooth sections $\mathscr{C}^{\infty}(X,L_{p})$  with respect to the metric $h_{p}$ and the volume form $\omega^{n}$ on $X$:
\begin{equation*}
\big\langle s_{1}, s_{2} \big\rangle_{p}:=\int_{X}{\big \langle s_{1}(x), s_{2}(x) \big \rangle_{h_{p}} \omega^{n}}\, \,\, \,\text{and}\,\,\, \|s\|^{2}_{p}:=\big\langle s, s \big\rangle_{p}.\end{equation*}
By virtue of Cartan-Serre finiteness theorem (see, e.g., chapter 6, \cite{GR3}), the space $H^{0}(X, L_{p})$ is finite dimensional and we will write $d_{p}:= \dim{H^{0}(X, L_{p})}$. The space $\mathcal{L}^{2}(X, L_{p})$ represents the completion of $\mathscr{C}^{\infty}(X,L_{p})$ with respect to this norm, forming the Hilbert space of square integrable sections of $L_{p}$. Consider the orthogonal projection operator  $\Pi_{p}: \mathcal{L}^{2}(X, L_{p})\rightarrow H^{0}(X, L_{p}).$  The Bergman kernel, denoted as $K_{p}(x, y)$ is defined as the integral kernel associated to this projection (for details see, e.g., \cite{MM1}, section $1.4$). If $\{S^{p}_{j} \}_{j=1}^{d_{p}}$ is an orthonormal basis for $H^{0}(X, L_{p}),$ by using the orthonormal representation of $K_{p}(x, y)$ with respect to the basis $\{S^{p}_{j} \}_{j=1}^{d_{p}}$ and making use of the reproducing property on the space $H^{0}(X, L_{p})$ we have: $$K_{p}(x, y)=\sum_{j=1}^{d_{p}}{S^{p}_{j}(x)\otimes S^{p}_{j}(y)^{*}}\in L_{p, x}\otimes L_{p, y}^{*},$$where $S^{p}_{j}(y)^{*}=\langle \, .\,, S^{p}_{j}(y)\rangle_{h_{p}}\in L^{*}_{p, y}$. The restriction of the Bergman kernel to the diagonal of \( X \) is called the Bergman kernel function of \( H^{0}(X, L_{p}) \), which we denote by \( K_{p}(x) := K_{p}(x, x) \). This kernel function has the dimensional density property, meaning that \( d_{p} = \int_{X} K_{p}(x) \omega^{n} \). We now make the following assumption on the behavior of \( K_{p}(x) \):

	\textbf{(B)} There exists a constant \( M_{0} > 1 \) and \( p_{0} \in \mathbb{N} \) such that
	\begin{equation}\label{0.11}
		\frac{A_{p}^{n}}{M_{0}} \leq K_{p}(x) \leq M_{0} A_{p}^{n},
	\end{equation}
	for every \( x \in X \) and \( p \geq p_{0} \).

This assumption produces the following estimates on the dimension \( d_{p} \), which will be useful in the upcoming analysis:
\begin{equation}\label{dim}
	\frac{\mathrm{Vol}(X) A_{p}^{n}}{M_{0}} \leq d_{p} \leq M_{0} \mathrm{Vol}(X) A_{p}^{n},
\end{equation}
for all \( p \geq p_{0} \).

We also make another simple observation that will be useful in the fourth section. Choosing $p'\in\mathbb{N}$ sufficiently large so that $A_{p}\geq M_{0}$ so that the assumption (\ref{0.11}) can be written as follows \begin{equation}\label{m1.3} A_{p}^{n-1}\leq K_{p}(x)\leq A_{p}^{n+1}\end{equation} for all such $p \geq p'$.

To introduce randomness, let us fix an orthonormal basis \( \{S^{p}_{j}\}^{d_{p}}_{j=1} \) of \( H^{0}(X, L_{p}) \). Then, each \( s_{p} \in H^{0}(X, L_{p}) \) can be uniquely expressed as:
\begin{equation}\label{rep}
	s_{p} = \sum_{j=1}^{d_{p}}{a_{j}^{p} S^{p}_{j}}.
\end{equation}

Using this representation, we identify the space \( H^{0}(X, L_{p}) \) with \( \mathbb{C}^{d_{p}} \) and equip it with the \( d_{p} \)-fold probability measure \( \sigma_{p} \), which satisfies the following conditions:

\begin{enumerate}
	\item [\textbf{(C1)}] The measure \( \sigma_{p} \) does not charge pluripolar sets.
	\item [\textbf{(C2)}] (Moment condition) There exists a constant \( \alpha \geq 2 \) and, for each \( p \geq 1 \), constants \( C_{p} > 0 \) such that
	$$
	\int_{\mathbb{C}^{d_{p}}} \big| \log |\langle a, v \rangle| \big|^{\alpha} d\sigma_{p}(a) \leq C_{p},
	$$
	for every \( v \in \mathbb{C}^{d_{p}} \) with \( \|v\| = 1 \).
\end{enumerate}

The probability space \( (H^{0}(X, L_{p}), \sigma_{p}) \) depends on the choice of the orthonormal basis used in the identification of \( H^{0}(X, L_{p}) \), unless \( \sigma_{p} \) is unitary invariant. Additionally, we consider the product probability space \( (\mathcal{H}_{\infty}, \sigma_{\infty}) = \left(\prod_{p=1}^{\infty} H^{0}(X, L_{p}), \prod_{p=1}^{\infty} \sigma_{p}\right) \), which consists of random sequences of global holomorphic sections of \( L_{p} \) for increasing values of \( p \).

Similarly, for the central objectives of this paper, we consider codimensions greater than $1$ by examining the product probability spaces $(H^{0}(X, L_{p})^{k}, \sigma_{p}^{k})$, where $\sigma_{p}^{k} = \sigma_{p} \times \cdots \times \sigma_{p}$ denotes the $k$-fold product measure on the space $H^{0}(X, L_{p})^{k}$. In this context, we take sequences in $H^{0}(X, L_{p})^{k}$ (i.e., sequences of random systems of global holomorphic sections) into account by considering the associated infinite product probability space, denoted as
 $$(\mathcal{H}^{k}_{\infty}, \sigma^{k}_{\infty}) = \left(\prod_{p=1}^{\infty} H^{0}(X, L_{p})^{k}, \prod_{p=1}^{\infty} \sigma^{k}_{p}\right).$$

Unlike \cite{BCM}, we have two probabilistic conditions on $H^{0}(X, L_{p})$. The condition \textbf{(C1)} is necessary and crucial for higher codimensional analysis, and is needed in the proof of the probabilistic version of Bertini’s theorem (see Proposition \ref{bertiniseveral} and Proposition \ref{wdberti} in Appendix) to ensure proper intersections of zero sets of each random holomorphic section as a component of a given random system. Obviously, it is not used when considering only a single holomorphic section, as was investigated in \cite{BCM}. The condition \textbf{(C2)} is a slightly different moment condition compared to the one given in \cite{BCM}, that is we take $\alpha \geq 2$ (see, e.g., p.3, assumption (B) in \cite{BCM}, where $\alpha \geq 1$ was the condition for the exponent $\alpha$). This alteration plays a key role in determining variance bounds, for further details, see Section \ref{S2}.

$\mathcal{D}^{p, q}(X)$ denotes the space of test forms of bidegree $(p, q)$ on the complex manifold $X$ and  we will let $\mathcal{D'}_{p, q}(X)$ denote the space of currents of bidegree $(p, q)$ on $X,$ so $\langle T, \varphi\rangle=T(\varphi)$ will mean the pairing of \,$T\in \mathcal{D'}_{p, q}(X)$ and $\varphi\in \mathcal{D}^{n-p, n-q}(X)$. We will work with only \textbf{real-valued} test forms for simplicity, (necessarily) of bidegree $(p, p)$. For a thorough investigation of currents in the context of complex manifolds, see, e.g.,  \cite{Dem12}.

Given a system \( \Sigma_{p}^{k}:= (s^{1}_{p}, \ldots, s^{k}_{p}) \) of \( k \) holomorphic sections of \( L_{p} \), where \( 1 \leq k \leq \dim X \), we denote their simultaneous zero locus by
$$
Z_{\Sigma_{p}^{k}} := \{ x \in X : s_{p}^{1}(x) = \cdots = s_{p}^{k}(x) = 0 \}.
$$
Its current of integration (with multiplicities) along the analytic subvariety \( Z_{\Sigma_{p}^{k}} \) is defined as follows. For \( \phi \in \mathcal{D}^{n-k, n-k}(X) \), we have
$$
\langle [Z_{\Sigma_{p}^{k}}], \phi \rangle := \int_{\operatorname{Reg}(Z_{\Sigma_{p}^{k}})} \phi,
$$
where \( \operatorname{Reg}(Z_{\Sigma_{p}^{k}}) \) denotes the set of regular points of the simultaneous zero locus \( Z_{\Sigma_{p}^{k}} \). The base locus is defined as \( \operatorname{Bs}(H^{0}(X, L_{p})):=\{x\in X: s(x)=0 \,\,\text{for} \,\,\text{all}\,\, s\in H^{0}(X, L_{p}) \} \). By \textbf{(B)}, the base locus \( \operatorname{Bs}(H^{0}(X, L_{p})) \) is empty for \( p \geq p_{0} \). Consequently, for $\sigma_{p}^{k}$-almost every system $\Sigma_{p}^{k}\in H^{0}(X, L_{p})^{k}$, the simultaneous zero set $Z_{\Sigma_{p}^{k}}$ is achieved as a smooth complete intersection of the zero loci of individual sections. Moreover, the current of integration over $Z_{\Sigma_{p}^{k}}$ is given by $$[Z_{\Sigma_{p}^{k}}]:=[Z_{s_{p}^{1}}]\wedge \cdots \wedge [Z_{s_{p}^{k}}]$$
for $\sigma^{k}_{p}$-almost all $\Sigma_{p}^{k}$. See Section 3 for details. We will call such a probabilistically generic element \textit{typical}.

\vspace{5mm}

 The expectation and the  variance of the current valued random variable $(H^{0}(X,L_{p})^{k}, \sigma_{p}^{k})\ni \Sigma_{p}^{k}\longmapsto [Z_{\Sigma_{p}^{k}} ]$ are defined by \begin{align}\label{expvari}
\mathbb{E} \langle[Z_{\Sigma_{p}^{k}}], \phi \rangle &:=\int_{H^{0}(X, L_{p})^{k}}{\langle [Z_{\Sigma_{p}^{k}}], \phi\rangle\, d\sigma_{p}^{k}(\Sigma_{p}^{k})} \\  \mathrm{Var} \langle [Z_{\Sigma_{p}^{k}}], \, \phi \rangle &:= \mathbb{E}\langle [Z_{\Sigma_{p}^{k}}], \, \phi \rangle^{2}- (\mathbb{E}\langle [Z_{\Sigma_{p}^{k}}], \, \phi \rangle)^{2},\end{align} where $\phi\in \mathcal{D}^{n-k, n-k}(X)$.
 Consistent with the diophantine approximation (\ref{cur}), we will consider normalized (up to the volume of $X$ times some dimensional constant) currents of integration along the zero set $Z_{\Sigma_{p}^{k}}$ of the random system $\Sigma_{p}^{k} = (s^{1}_{p}, \ldots, s^{k}_{p})$, more precisely,
  \begin{equation*}
  	\big[\widehat{Z}_{\Sigma_{p}^{k}}\big]:=\frac{1}{A_{p}^{k}}\big[Z_{\Sigma_{p}^{k}}\big].
  \end{equation*}

 Now, we are ready to give the main results of this paper:

\begin{thm}\label{th1}
	Let \( (L_{p}, h_{p})_{p \geq 1} \), \( (X, \omega) \), and \( \sigma_{p} \) be as defined above. Assume that they satisfy the conditions \textbf{(A)}, \textbf{(C1)}-\textbf{(C2)},
and the linear system $H^{0}(X, L_p)$ is base-point free, i.e. $\operatorname{Bs}(H^{0}(X, L_p))=\varnothing$ for large values of $p$. Further, let the systems \( \Sigma^{k}_{p}= (s_{p}^{1}, s_{p}^{2}, \ldots, s_{p}^{k}) \) with sections chosen independently with respect to \( \sigma^{k}_p \) be given. Then, there exists \( P \in \mathbb{N} \) such that for all \( p \geq P \) and any \( \phi \in \mathcal{D}^{n-k, n-k}(X) \), the following estimate holds:
	\begin{equation}\label{mainvar}
			\mathrm{Var}\langle [\widehat{Z}_{\Sigma_{p}^{k}}], \phi \rangle  \leq D_{n} \frac{(C_{p})^{2/\alpha}}{A^{2}_{p}}\,(B_{\phi})^{2}(\mathrm{Vol}(X))^{2}
			 \end{equation}where  $B_{\phi}= b \|\phi\|_{\mathscr{C}^{2}}$ is a positive constant depending on the form $\phi$, and $D_{n}$ is a constant that depends only on the dimension of $X$.
\end{thm}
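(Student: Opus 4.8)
The plan is to split each zero current into a deterministic closed part plus a centered random part, and then to exploit the mutual independence of the $k$ sections together with the second–moment control furnished by \textbf{(C2)}. First I would fix $p\ge p_{0}$, so that $Bs(H^{0}(X,L_{p}))=\emptyset$ and, by the probabilistic Bertini theorem (Proposition~\ref{bertiniseveral}), for $\sigma_{p}^{k}$-typical systems the zero loci intersect properly. Writing $s_{p}^{i}=\sum_{j}a_{j}^{p,i}S_{j}^{p}$ in a local frame, the Poincaré--Lelong formula gives $[Z_{s_{p}^{i}}]=dd^{c}\log\|s_{p}^{i}\|_{h_{p}}+c_{1}(L_{p},h_{p})$, while the pointwise identity $\|s_{p}^{i}(x)\|_{h_{p}}^{2}=K_{p}(x)\,|\langle a^{p,i},u_{p}(x)\rangle|^{2}$, with $u_{p}(x)\in\mathbb{C}^{d_{p}}$ a unit vector, yields $\log\|s_{p}^{i}\|_{h_{p}}=\tfrac12\log K_{p}+g_{p}^{i}$, where $g_{p}^{i}(x):=\log|\langle a^{p,i},u_{p}(x)\rangle|$ is the only random term and the $g_{p}^{i}$ are i.i.d.\ in $i$. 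Setting $\bar g_{p}:=\mathbb{E}[g_{p}^{i}]$, $h^{i}:=g_{p}^{i}-\bar g_{p}$, $\gamma:=\mathbb{E}[[Z_{s_{p}^{i}}]]$ (a positive closed current in the class $c_{1}(L_{p})$) and $R^{i}:=[Z_{s_{p}^{i}}]-\gamma=dd^{c}h^{i}$, each factor becomes $[Z_{s_{p}^{i}}]=\gamma+R^{i}$ with $\mathbb{E}[R^{i}]=0$.

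Next I would use the telescoping identity $\bigwedge_{i}[Z_{s_{p}^{i}}]-\gamma^{k}=\sum_{i=1}^{k}\gamma^{i-1}\wedge R^{i}\wedge\bigwedge_{j>i}[Z_{s_{p}^{j}}]$ and pair it with $\phi$, obtaining $\langle[Z_{\Sigma_{p}^{k}}],\phi\rangle-\langle\gamma^{k},\phi\rangle=\sum_{i=1}^{k}W_{i}$, where $W_{i}:=\langle\gamma^{i-1}\wedge R^{i}\wedge\bigwedge_{j>i}[Z_{s_{p}^{j}}],\phi\rangle$ depends only on sections $i,i+1,\dots,k$ and is linear in $R^{i}$. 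Since $\mathbb{E}[R^{i}]=0$, conditioning on all sections but the $i$-th gives $\mathbb{E}[W_{i}]=0$, hence $\mathbb{E}\langle[Z_{\Sigma_{p}^{k}}],\phi\rangle=\langle\gamma^{k},\phi\rangle$; for $i<i'$ the variable $W_{i'}$ does not involve section $i$, so the same conditioning forces $\mathbb{E}[W_{i}W_{i'}]=0$. The variance therefore collapses to $\mathrm{Var}\langle[Z_{\Sigma_{p}^{k}}],\phi\rangle=\sum_{i=1}^{k}\mathbb{E}[W_{i}^{2}]$.

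To bound each $\mathbb{E}[W_{i}^{2}]$ I would integrate by parts: as $\Theta_{i}:=\gamma^{i-1}\wedge\bigwedge_{j>i}[Z_{s_{p}^{j}}]$ is a positive closed $(k-1,k-1)$-current, $dd^{c}$ lands only on $\phi$, giving $W_{i}=\int_{X}h^{i}\,\Theta_{i}\wedge dd^{c}\phi$. Because $h^{i}$ (section $i$) is independent of $\Theta_{i}$ (sections $>i$), conditioning on $\Theta_{i}$ gives $\mathbb{E}[W_{i}^{2}\mid\Theta_{i}]=\int_{X\times X}\mathbb{E}[h^{i}(x)h^{i}(y)]\,d\mu_{i}(x)\,d\mu_{i}(y)$ with $\mu_{i}=\Theta_{i}\wedge dd^{c}\phi$, and by Cauchy--Schwarz $|\mathbb{E}[h^{i}(x)h^{i}(y)]|\le\mathbb{E}[|g_{p}^{i}(x)|^{2}]^{1/2}\mathbb{E}[|g_{p}^{i}(y)|^{2}]^{1/2}$. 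Applying \textbf{(C2)} to the unit vectors $u_{p}(x)$ together with Jensen's inequality (this is where $\alpha\ge2$ is essential) gives $\mathbb{E}[|g_{p}^{i}(x)|^{2}]\le(\mathbb{E}[|g_{p}^{i}(x)|^{\alpha}])^{2/\alpha}\le C_{p}^{2/\alpha}$ uniformly in $x$, whence $\mathbb{E}[W_{i}^{2}]\le C_{p}^{2/\alpha}\big(\int_{X}|\Theta_{i}\wedge dd^{c}\phi|\big)^{2}$. The total variation is at most $\|dd^{c}\phi\|_{\mathscr{C}^{0}}$ times the mass of $\Theta_{i}$, which is purely cohomological: $\int_{X}\Theta_{i}\wedge\omega^{n-k+1}=\int_{X}c_{1}(L_{p})^{k-1}\wedge\omega^{n-k+1}\le C_{n}A_{p}^{k-1}\mathrm{Vol}(X)$ by \textbf{(A)}. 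Using $\|dd^{c}\phi\|_{\mathscr{C}^{0}}\le b\|\phi\|_{\mathscr{C}^{2}}$, summing over $i\le k\le n$ and dividing by $A_{p}^{2k}$ coming from $[\widehat{Z}_{\Sigma_{p}^{k}}]=A_{p}^{-k}[Z_{\Sigma_{p}^{k}}]$ reproduces exactly \eqref{mainvar}.

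The step I expect to be the genuine obstacle is the analytic justification underlying the third paragraph: that for typical systems the products $\bigwedge_{j>i}[Z_{s_{p}^{j}}]$ and $\Theta_{i}$ are well-defined positive closed currents, that the integration by parts $\langle dd^{c}h^{i},\Theta_{i}\wedge\phi\rangle=\langle h^{i},\Theta_{i}\wedge dd^{c}\phi\rangle$ remains valid despite the logarithmic singularities of $h^{i}$ along $Z_{s_{p}^{i}}$ and the singular support of $\Theta_{i}$, and that Fubini may be invoked to interchange $\mathbb{E}_{h^{i}}$ with integration against $\mu_{i}$. These points rely on the proper-intersection genericity secured by \textbf{(C1)} and on the integrability of $h^{i}$ against $\mu_{i}$; once they are in place, the probabilistic reduction and the moment estimate are straightforward.
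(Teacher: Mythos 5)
Your argument is correct in substance and arrives at the same bound, but it is organized differently from the paper. The paper proves the estimate by induction on the codimension (Theorem \ref{genvar}): at each step the variance is split as $\mathbb{E}[I_{1}]+\mathbb{E}[I_{2}]$, where $I_{1}$ is the conditional variance in the last section restricted to $Y=Z_{\Sigma_{p}^{k-1}}$ (handled by the codimension-one case, Theorem \ref{cod1}) and $I_{2}$ is controlled by Cauchy--Schwarz and the induction hypothesis restricted to $Y=Z_{s_{p}^{k}}$; the cohomological mass bound is Lemma \ref{coh_lemma}, and the factorization of the expectation is Theorem \ref{indpw}. Your telescoping $\bigwedge_{i}[Z_{s_{p}^{i}}]-\gamma^{k}=\sum_{i}\gamma^{i-1}\wedge R^{i}\wedge\bigwedge_{j>i}[Z_{s_{p}^{j}}]$ with pairwise-uncorrelated increments $W_{i}$ is essentially the unrolled form of that induction --- a one-shot martingale-difference decomposition in place of the recursive two-term split --- and your estimate $\mathbb{E}[|g_{p}^{i}(x)|^{2}]\le C_{p}^{2/\alpha}$ via Cauchy--Schwarz on the centered covariance kernel plus Jensen replaces the paper's double application of H\"older to the uncentered term $B_{3}$ in Theorem \ref{cod1}; both exploit $\alpha\ge 2$ in the same way, and both end with the same cohomological computation $\int_{X}c_{1}(L_{p},h_{p})^{k-1}\wedge\omega^{n-k+1}\le(2A_{p})^{k-1}\mathrm{Vol}(X)$ from \textbf{(A)}. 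What your version buys is cleaner global bookkeeping (no induction, an explicit orthogonal decomposition of the fluctuation); what the paper's version buys is that every analytic issue you flag in your final paragraph --- well-definedness of the mixed wedge products $\gamma^{i-1}\wedge\bigwedge_{j>i}[Z_{s_{p}^{j}}]$, the integration by parts $\langle dd^{c}h^{i}\wedge\Theta_{i},\phi\rangle=\int_{X}h^{i}\,\Theta_{i}\wedge dd^{c}\phi$, and the Fubini interchange of $\mathbb{E}$ with the wedge pairing --- is discharged by restricting to the smooth generic complete intersection $Y$ and applying the codimension-one statement there, with the genericity supplied by Propositions \ref{bertiniseveral} and \ref{wdberti} and the restriction identity (\ref{restt}) together with Theorem \ref{indpw}. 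Those are exactly the ingredients you would need to make your third paragraph rigorous, so nothing essential is missing from your outline, but completing it would require the same appendix-level machinery rather than a shortcut around it.
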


Although we do not focus on deriving the asymptotic formula for the variance in this paper, we wish to mention some important results in this direction from the literature. In their article \cite{SZ08}, Shiffman and Zelditch, in Section 3, first define the variance current in the prequantum setting, associated with the current of integration over the zero set of a system of holomorphic sections. They then prove an explicit formula for this current (Theorem 3.13), particularly for the pair correlation current, using a so-called pluribipotential function, in the standard Gaussian setting. In \cite{SZ10}, this result from \cite{SZ08} is used to derive an explicit formula (Corollary 3.3) for the variance of smooth linear statistics associated with a holomorphic system. One of the key advantages of the standard Gaussian setting is that, in codimension $1$, the variance current admits an alternative representation via a pluribipotential function (In particular, \cite[lemma 3.3]{SZ08} provides a useful property for expectation relations of complex joint Gaussian random variables). Specializing our framework to the prequantum line bundle case with $(L_{p}, h_{p})=(L^{\otimes p}, h^{\otimes p})$ and $A_p=p$, our general argument yields an $O(p^{-2})$ bound for the variance of the normalized zero current, which they also obtained in their paper \cite{SZ99}, whereas the Gaussian case considered by Shiffman-Zelditch's asymptotic result (\cite[Theorem 1.1]{SZ10}) provides the order $O(p^{-(n+2)})$, which is obviously sharper.

\begin{thm}\label{th2}
	Let \( (L_{p}, h_{p})_{p \geq 1} \), \( (X, \omega) \), and \( \sigma_{p} \) be as defined above. Assume that they satisfy the conditions \textbf{(A)}, \textbf{(B)}, and \textbf{(C1)}-\textbf{(C2)}.
	
	\begin{itemize}
		\item [(i)] If \( \lim_{p \to \infty} \frac{C_{p}^{1/\alpha}}{A_{p}} = 0 \), then for \( 1 \leq k \leq \dim_{\mathbb{C}} X \),
		$$
		\mathbb{E}\big[\widehat{Z}_{\Sigma_{p}^{k}}\big] \longrightarrow \omega^k
		$$
		in the weak* topology of currents as \( p \to \infty \).
		
		\item [(ii)] If \( \sum_{p=1}^{\infty} \frac{C_{p}^{2/\alpha}}{A_{p}^{2}} < \infty \), then for \( \sigma_{\infty}^{k} \)-almost every sequence \( \{\Sigma_{p}^{k}\} \in \mathcal{H}_{\infty}^{k} \),
		$$
		\big[\widehat{Z}_{\Sigma_{p}^{k}}\big] \longrightarrow \omega^k
		$$
		in the weak* topology of currents as \( p \to \infty \).
	\end{itemize}
\end{thm}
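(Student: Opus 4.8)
The plan is to reduce everything to the codimension one case via the Poincaré--Lelong formula, build up the higher codimensions by an inductive slicing argument for the expectation in part~(i), and then obtain the almost sure statement in part~(ii) from part~(i) together with the variance bound of Theorem~\ref{th1}.

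First I would treat part~(i) in codimension $k=1$. Writing $\beta_p := \frac{1}{A_p}c_1(L_p,h_p)$ and $\psi_p := \frac{1}{A_p}\log\|s_p\|_{h_p}$, the Poincaré--Lelong formula gives $\frac{1}{A_p}[Z_{s_p}] = \beta_p + dd^c\psi_p$. Expanding $s_p=\sum_j a_j^p S^p_j$ in the orthonormal basis and factoring through a local frame yields the pointwise identity $\log\|s_p(x)\|_{h_p} = \log|\langle a, u_p(x)\rangle| + \tfrac12\log K_p(x)$, where $u_p(x)$ is a unit vector in $\mathbb{C}^{d_p}$ depending measurably on $x$. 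Taking expectations and applying the moment condition \textbf{(C2)} with Hölder's inequality gives $|\mathbb{E}\log|\langle a,u_p(x)\rangle|| \le C_p^{1/\alpha}$ uniformly in $x$, so $\bar\psi_p := \mathbb{E}\psi_p$ is a bounded $\beta_p$-psh function with $\|\bar\psi_p\|_\infty \le \frac{C_p^{1/\alpha}}{A_p} + \frac{1}{2A_p}\|\log K_p\|_\infty$. By \eqref{m1.3} the second summand is $O(\log A_p/A_p)$, and under the hypothesis $C_p^{1/\alpha}/A_p\to 0$ the first tends to zero as well, so $\bar\psi_p\to 0$ uniformly. Since $\beta_p\to\omega$ in $\mathscr{C}^0$ by \textbf{(A)}, the expected normalized current $\Theta_p := \beta_p + dd^c\bar\psi_p = \frac{1}{A_p}\mathbb{E}[Z_{s_p}]$ is a positive closed $(1,1)$-current with bounded potential converging weakly to $\omega$.

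For the inductive step in part~(i) I would condition on the first $k-1$ sections. By the Bertini-type Proposition~\ref{bertiniseveral} (which is exactly where \textbf{(C1)} enters), for $\sigma_p^{k-1}$-almost every choice the variety $Y_{k-1}=Z_{s^1_p}\cap\cdots\cap Z_{s^{k-1}_p}$ is smooth and is met properly by $Z_{s^k_p}$, so $[Z_{\Sigma_p^k}] = [Z_{s^k_p}]\wedge[Y_{k-1}]$. Applying Poincaré--Lelong to $s^k_p|_{Y_{k-1}}$ and integrating by parts on the smooth $Y_{k-1}$ gives $\langle [Z_{s^k_p}]\wedge[Y_{k-1}],\phi\rangle = A_p\int_{Y_{k-1}}(\beta_p + dd^c\psi_p^k)\wedge\phi$; taking the conditional expectation over $s^k_p$ and interchanging $\mathbb{E}$ with $\int_{Y_{k-1}}$ by Fubini--Tonelli (legitimate because $\mathbb{E}|\psi_p^k|$ is bounded by \textbf{(C2)}) replaces $\psi_p^k$ by $\bar\psi_p$, yielding $\frac{1}{A_p^k}\mathbb{E}_{s^k_p}[Z_{\Sigma_p^k}] = \Theta_p\wedge\frac{1}{A_p^{k-1}}[Y_{k-1}]$. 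Since $\Theta_p$ is deterministic with bounded potential, wedging with it is linear and continuous in the closed current $[Y_{k-1}]$, so the tower property gives the recursion $\frac{1}{A_p^k}\mathbb{E}[Z_{\Sigma_p^k}] = \Theta_p\wedge\big(\frac{1}{A_p^{k-1}}\mathbb{E}[Z_{\Sigma_p^{k-1}}]\big)$. The inductive hypothesis gives $\mu_p := \frac{1}{A_p^{k-1}}\mathbb{E}[Z_{\Sigma_p^{k-1}}]\to\omega^{k-1}$ with uniformly bounded mass; writing $\Theta_p-\omega = (\beta_p-\omega)+dd^c\bar\psi_p$ with both terms tending to $0$ uniformly, a direct pairing-and-integration-by-parts estimate gives $\langle(\Theta_p-\omega)\wedge\mu_p,\phi\rangle\to 0$, while $\omega\wedge\mu_p\to\omega^k$, which closes the induction. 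I expect the verification that these wedge products are well defined for typical systems and that $\mathbb{E}$ commutes with the wedge and with $dd^c$ to be the main obstacle, since it is here that the probabilistic hypotheses \textbf{(C1)}--\textbf{(C2)} must be combined with Bedford--Taylor intersection theory.

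Finally, for part~(ii) I would fix a real test form $\phi\in\mathcal{D}^{n-k,n-k}(X)$ and apply Chebyshev's inequality with the variance bound of Theorem~\ref{th1}: for each $\varepsilon>0$, $\sigma_p^k\{|\langle[\widehat{Z}_{\Sigma_p^k}],\phi\rangle - \mathbb{E}\langle[\widehat{Z}_{\Sigma_p^k}],\phi\rangle| > \varepsilon\} \le \varepsilon^{-2}D_n\frac{C_p^{2/\alpha}}{A_p^2}B_\phi^2(\mathrm{Vol}(X))^2$. Since $\sum_p C_p^{2/\alpha}/A_p^2<\infty$ forces $C_p^{1/\alpha}/A_p\to 0$, part~(i) applies and the means converge to $\langle\omega^k,\phi\rangle$; the Borel--Cantelli lemma then gives $\langle[\widehat{Z}_{\Sigma_p^k}],\phi\rangle\to\langle\omega^k,\phi\rangle$ for $\sigma_\infty^k$-almost every sequence. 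To upgrade this from a fixed $\phi$ to weak$^*$ convergence I would choose a countable family $\{\phi_m\}$ dense in $\mathscr{C}^0$, intersect the corresponding almost sure events, and use that the positive currents $[\widehat{Z}_{\Sigma_p^k}]$ have almost surely bounded mass (which follows by taking $\phi=\omega^{n-k}$ in the convergence just established) together with the order-zero bound $|\langle T,\phi\rangle|\le c_{n,k}\|T\|\,\|\phi\|_{\mathscr{C}^0}$ valid for positive currents, to extend the convergence to every test form.
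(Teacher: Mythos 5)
Your proposal is correct and follows essentially the same route as the paper: Poincar\'e--Lelong plus the moment condition \textbf{(C2)} and H\"older for the codimension-one expected distribution, induction on the codimension via the factorization $\mathbb{E}[\widehat{Z}_{\Sigma_p^k}]=\big(\tfrac{1}{A_p}\mathbb{E}[Z_{s_p}]\big)^{k}$ (the paper's Theorem \ref{indpw}, with the telescoping identity of \cite{CLMM} in place of your explicit recursion), and a second-moment argument from Theorem \ref{th1} for the almost sure statement. The only cosmetic difference is that you conclude part (ii) by Chebyshev and Borel--Cantelli where the paper applies Beppo--Levi to $\sum_p\langle[\widehat{Z}_{\Sigma_p^k}]-\mathbb{E}[\widehat{Z}_{\Sigma_p^k}],\phi\rangle^2$; both rest on the same summability of variances and the same countable $\mathscr{C}^0$-dense family of test forms.
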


If the measure \( \sigma_{p} \) satisfies the moment condition \textbf{(C2)} with constants \( C_{p} = \Lambda \), independent of \( p \), then the assumption in (i), \( \lim_{p \to \infty} C_{p}^{1/\alpha} A_{p}^{-1} = 0 \), is automatically satisfied. Furthermore, hypothesis (ii) reduces to \( \sum_{p=1}^{\infty} A_{p}^{-2} < \infty \). \cite[Theorem 0.4]{CLMM}, which was proved by using the meromorphic transforms from complex dynamics for Fubini-Study probability measures, is one of the examples of this situation, that is to say, it is a special case of Theorem \ref{th2}, see Section \ref{sec5} for this case and more other probability measures satisfying \textbf{(C2)}.

For codimension one, the approach employed by Bayraktar, Coman, and Marinescu \cite{BCM}—which circumvents the use of variance and expected distribution in the setting where the first Chern currents may not converge—works effectively. However, this method does not extend to codimensions greater than one in the same setting. Our approach, utilizing the convergence condition \eqref{cur} on the Chern forms, generalizes the results of equidistribution in higher codimensions in the present setting of line bundle sequences.

At the same time, our principal result, which has been established for codimension $k$, is applicable to a multitude of frequently investigated probability measures. These are the area measure of spheres, Gaussian measure, Fubini-Study measure, and measures with bounded density having logarithmically decaying tails. When our main result is applied to these measures in the context of tensor powers of a fixed prequantum line bundle, the required summability assumption can be dropped. Our methods extend the results of Shiffman and Zelditch (\cite{SZ99}).

\section{Background}\label{S2}

Let \((X, \omega)\) be a compact Kähler manifold with \(\dim_{\mathbb{C}}{X}=n\).
A holomorphic line bundle \(L\) over $X$ is defined by compiling complex lines \(\{L_{x}\}_{x\in X}\)
and constructing a complex manifold of dimension \(1 + \dim_{\mathbb{C}}{X}\) with a projection map
\(\pi : L \rightarrow X \) such that \(\pi\) assigns each line (or fiber) \(L_{x}\) to \(x\) and is holomorphic.
Using an open cover \(\{U_{\alpha}\}\) of \(X\), we can locally trivialize \(L\) through biholomorphisms
\(\Psi_{\alpha}: \pi^{-1}(U_{\alpha}) \rightarrow U_{\alpha} \times \mathbb{C}\) which map \(L_{x} = \pi^{-1}(x)\)
isomorphically onto \(\{x\} \times \mathbb{C}\). The line bundle \(L\) is then uniquely (i.e., up to isomorphism)
determined by these transition functions \(g_{\alpha \beta}\), which are non-vanishing holomorphic functions
on \(U_{\alpha \beta}:=U_{\alpha} \cap U_{\beta}\) defined by \(g_{\alpha \beta}= \Psi_{\alpha} \circ \Psi^{-1}_{\beta}|_{\{x\} \times \mathbb{C}}\).
These functions \(g_{\alpha \beta}\) satisfy the cocycle condition \(g_{\alpha \beta} g_{\beta \gamma} g_{\gamma \alpha}=1\).

Because the transition functions satisfy the cocycle condition, they define a cohomology class, denoted as
\([g_{\alpha \beta}] \in H^{1}(X, \mathcal{O}^{*})\). Here, \(H^{1}(X, \mathcal{O}^{*})\) is the first sheaf
cohomology group of the manifold \(X\) with coefficients in the sheaf of non-zero holomorphic functions,
denoted by \(\mathcal{O}^{*}\). The exponential short exact sequence \(0 \rightarrow \mathbb{Z} \rightarrow \mathcal{O}
\rightarrow \mathcal{O}^{*} \rightarrow 0\) produces a mapping \(c_{1}: H^{1}(X, \mathcal{O}^*) \rightarrow H^{2}(X, \mathbb{Z})\) and the first Chern class
\(c_{1}(L)\) is defined as the image of \([g_{\alpha \beta}]\) under this mapping.

Let $\{U_{\alpha}\}$ be an open covering of $X$ and $e_{\alpha}$ be  holomorphic frames of $L$ on each $U_{\alpha}$. Then the metric is given by a family of functions $h_{\alpha}=|e_{\alpha}|^{2}_{h}:U_{\alpha}\rightarrow [0, \infty]$ with local weights $\varphi_{\alpha}=-\frac{1}{2}\log{h_{\alpha}}\in \mathscr{C}^{2}(U_{\alpha})$. Given the transition functions $g_{\alpha\beta}=\frac{e_{\beta}}{e_{\alpha}}\in \mathcal{O}^{*}_{X}(U_{\alpha}\cap U_{\beta})$ of $L$, then on $U_{\alpha}\cap U_{\beta},$\, we have that $h_{\beta}=|g_{\alpha\beta}|^{2}h_{\alpha}$ and therefore $\log{h_{\beta}}=\log{|g_{\alpha\beta}|^{2}}+ \log{h_{\alpha}},$ which is equivalent to the following  $$\varphi_{\alpha}=\varphi_{\beta}+ \log{|g_{\alpha\beta}|}\,\, \text{in}\,\,\mathscr{C}^{2}(U_{\alpha}\cap U_{\beta}).$$

In this paper, the notion of positivity we use will be what is known in the literature as Nakano-Griffiths positivity for line bundles: A holomorphic line bundle $(L, h),$ with a Hermitian metric (that may not be $\mathscr{C}^{\infty}$)  is said to be positive or semi-positive if the local $\mathscr{C}^{2}$ weight functions $\varphi$ corresponding to the metric $h$ are strictly plurisubharmonic or plurisubharmonic, respectively.

Since we will be dealing with positive line bundles, the local weight functions $\varphi_{\alpha}$ above are strictly plurisubharmonic, which means that, in addition to being of class $\mathscr{C}^{2}$, $dd^{c} \varphi_{\alpha} >0$ holds pointwise. By Proposition 2.4 of \cite{CMM}, the line bundles we consider are ample and hence $X$ is projective.

We use the anticommutativity convention for a wedge product of a current $T$ of bidegree $(p, q)$ and a complex differential form $\theta$ of type $(r, s)$ (recall that $(T \wedge \theta)(\varphi):= T(\theta \wedge\varphi)$ for $\varphi \in \mathcal{D}^{n-p-r, \,n-q-s}(X)$), \begin{equation} \label{antc}\theta \wedge T:= (-1)^{(p+q)(r+s)} (T \wedge \theta).\end{equation} Given that all the objects we consider are of bidegree or type $(p, p)$, we will not distinguish between $\theta \wedge T$ and $T \wedge \theta$. This simplification will be helpful in the proof of Theorem \ref{exdist}.

The following fact (see section 4 of \cite{BCM}) will be required in the next section: There exists a constant $b>0$ such that for every  $\phi\in \mathcal{D}^{n-k, n-k}(X)$, $k \in \{1, 2, \ldots, n\}$, \begin{equation}\label{hercomp}
                              -b\, \| \phi \|_{\mathscr{C}^{2}}\, \omega^{n-k+1}\leq dd^{c}\phi \leq b \, \| \phi \|_{\mathscr{C}^{2}} \,  \omega^{n-k+1},
                            \end{equation}which amounts to saying that the total variation of $dd^{c}\phi$ verifies the inequality $|dd^{c}\phi|\leq b \, \| \phi \|_{\mathscr{C}^{2}} \, \omega^{n-k+1}$.

 In our local analysis throughout the paper, we always consider trivializing open neighborhoods of a given point $x\in X$. Let $U_{p}$ be such a trivializing neighborhood of $x$, and let $\{S_{1}^{p}, \ldots, S^{p}_{d_{p}}\}$ be an orthonormal basis for $H^{0}(X, L_{p})$. For a local holomorphic frame  $e_{p}$ of $L_{p}$ in $U_{p}$, we have $S^{p}_{j}=s^{p}_{j}e_{p},$\,where $s^{p}_{j}$ is a holomorphic function in $U_{p}.$ Then the Bergman kernel functions and the Fubini-Study currents are defined as follows:
\begin{equation}\label{bergfub}
  K_{p}(x)=\sum^{d_{p}}_{j=1}{|S^{p}_{j}(x)|^{2}_{h_{p}}}\,\,\,\text{and}\,\,\,\gamma_{p}|_{U_{p}}=\frac{1}{2}dd^{c}\log\big(\sum^{d_{p}}_{j=1}{|s^{p}_{j}|^{2}}\big),
\end{equation}
where $d=\partial+\overline{\partial},$ and $d^{c}=\frac{1}{2\pi i}(\partial- \overline{\partial})$.  It is important to emphasize that $K_{p}$ and $\gamma_{p}$ are independent of the chosen basis $\{{S_{1}^{p}, \ldots, S^{p}_{d_{p}}}\}$, (see \cite{CM1}, section 3). Moreover, $\gamma_{p}$ is a positive closed current of bidegree $(1, 1),$ smooth away from the base locus $\operatorname{Bs}(H^{0}(X,L_{p}))$ of $H^{0}(X,L_{p}),$ and by (\ref{bergfub}) we have that \begin{equation}\label{fubstu} \log{K_{p}}\in L^{1}(X,  \omega^{n})\,\,\, \text{and}\,\,\,\gamma_{p}- c_{1}(L_{p}, h_{p})=\frac{1}{2}dd^{c}\log{K_{p}},\end{equation}
which essentially means that $\gamma_{p}$  has the same de Rham cohomology class as $c_{1}(L_{p},h_{p})$.

 For large enough $p\in \mathbb{N}$, if $\Phi_{p}: X\dashrightarrow\mathbb{P}^{d_{p}-1}$ is the Kodaira map defined by the basis $\{S_{j}^{p}\}_{j=1}^{d_{p}}$ via $$\Phi_{p}(x)=[S_{1}^{p}(x):\ldots:S_{d_{p}}^{p}(x)],\,\,\,\,$$ then we have $\gamma_{p}=\Phi_{p}^{*}(\omega_{FS}),$ which justifies their name. This mapping $\Phi_{p}$ is also a holomorphic embedding by Corollary 1.3 of \cite{CLMM} whose proof can be carried out by the same arguments in \cite{GH} (p. 189), and thus it is a generalization of the classical Kodaira embedding theorem to the current setting.

Before finishing this section, let us give an important tool also known as Poincar\'{e}-Lelong formula (see \cite{MM1}, Theorem 2.3.3): Given $s\in H^{0}(X, L_{p}),$ we have \begin{equation}\label{lelpoi}
  [Z_{s}]= c_{1}(L_{p}, h_{p})+ dd^{c}\log{|s|_{h_{p}}}.
\end{equation}

%%%%%%%%%%%%%%%%%%%%%%%%%%%%%%%%%%%%%%%%%%%%%%%%%%%%%%%%%%%%%%%%%%%%%%%%%%%%%%%

\section{Variance Estimate}\label{S3} In this section, we will give the proof of Theorem \ref{th1} using an inductive approach based on the codimension $k$ by proving it in a more general setting of several line bundles inspired by \cite[Theorem 3.1]{Shif}. Theorem \ref{th1} will follow from this theorem.

Until we present the proof of Theorem \ref{th1} at the end of this section, we will temporarily adjust the notation for spaces of holomorphic sections and probability measures since we will be working with $k$ distinct holomorphic line bundles, $k\in \{1, \ldots, n\}$.

 For $j=1, \ldots, k$, let $\mathcal{S}_{j} \subset H^{0}(X, L_{j})$ be (finite dimensional) subspaces of dimension $d_{j}$ with their fixed orthonormal bases $\{S_{j, d_{j}}\}$ and identifications $\mathcal{S}_{j} \simeq \mathbb{C}^{d_{j}}$. For these subspaces we denote the associated Bergman kernels by $K_{\mathcal{S}_{p,j}}(x)$.
When we say that \textbf{(B)} and \textbf{(C2)} hold in this auxiliary setting, we mean that they hold with the same constants relative to the orthonormal bases and subspaces fixed above. Likewise, \textbf{(C1)} is understood to mean that the corresponding measures assign zero mass to pluripolar subsets of $\mathcal{S}_j$.

We will start with the case of codimension $1$, serving as the initial step in our induction process. To begin, let us make some initial observations.

Let $s_{p} \in \mathcal{S}_{p} \subset H^{0}(X, L_{p})$. We can write it as $$s_{p}=\sum_{j=1}^{d_{p}}{a_{j}^{p}S^{p}_{j}}=\langle a, \Gamma_{p} \rangle,$$ where $\Gamma_{p}=(S_{1}^{p}, \ldots, S^{p}_{d_{p}}),\,a=(a_{1}^{p}, \ldots, a_{d_{p}}^{p})\in \mathbb{C}^{d_{p}}$\, and\, $\{S^{p}_{j}\}_{j=1}^{d_{p}}$\, is an orthonormal basis of $\mathcal{S}_{p}$.

For \( x \in X \), let \( U \) be a small open neighborhood of \( x \) such that
for each \( p \), we take a local holomorphic frame \( e_{p} \) of \( L_{p} \) on \( U \). Then locally $S^{p}_{j}=f_{j}e_{p}$, where $f_{j}$ are holomorphic functions in $U$\, and so, by writing $f=(f_{1}, \ldots, f_{d_{p}}),$ $$s_{p}=\sum_{j=1}^{d_{p}}{a_{j}^{p}f_{j}e_{p}}=\langle a, f \rangle e_{p}.$$By Poincar\'{e}-Lelong formula (\ref{lelpoi}), on the neighborhood $U$, we have $$ [Z_{s_{p}}]=dd^{c}\log{|\langle a, f \rangle|}= dd^{c}\log|\langle a, \Gamma_{p}\rangle|_{h_{p}}+c_{1}(L_{p}, h_{p}).$$ Now, for any $\phi \in \mathcal{D}^{n-1, n-1}(X)$, we define the following random variable \begin{equation} \label{Wss} W_{s_{p}}:=[Z_{s_{p}}] - c_{1}(L_{p},  h_{p})=dd^{c}\log{|\langle a, \Gamma_{p} \rangle|_{h_{p}}}.\end{equation}By a basic feature of the variance, \begin{equation}\label{variancesum}
\mathrm{Var}\langle[Z_{s_{p}}], \phi \rangle= \mathrm{Var}\langle W_{s_{p}}, \phi \rangle .
\end{equation}
Therefore, in the light of (\ref{variancesum}) it is enough to estimate $\mathrm{Var}\langle W_{s_{p}}, \phi \rangle$.
 Employing  certain methods from \cite{SZ99} and \cite{SZ08} in our setting, we have the following theorem for codimension one.
\begin{thm}\label{cod1}
  Let \( (L_{p}, h_{p})_{p \geq 1} \), \( (X, \omega) \), and \( \sigma_{p} \) be as defined above. Assume that they satisfy the condition \textbf{(C2)}. If $s_{p}\in \mathcal{S}_{p} \subset H^{0}(X, L_{p})$, then for all $p\geq 1$ and any  $\phi \in\mathcal{D}^{n-1, n-1}(X)$, we have the following variance estimate \begin{equation}\label{ddcform}
  	\mathrm{Var} \langle [\widehat{Z}_{s_{p}}], \phi \rangle  \leq \frac{(C_{p})^{2/\alpha}}{A^{2}_{p}} (\int_{X}{|dd^{c} \phi|})^{2}.
  \end{equation}
Moreover, using (\ref{hercomp}), one gets \begin{equation*}\mathrm{Var} \langle [\widehat{Z}_{s_{p}}], \phi \rangle \leq \frac{(C_{p})^{2/\alpha}}{A^{2}_{p}} (B_{\phi} \mathrm{Vol}(X))^{2}, \end{equation*}where $B_{\phi}= b \|\phi\|_{\mathscr{C}^{2}}$ is a constant depending on the form $\phi$.
\end{thm}

\begin{proof}
First, by (\ref{Wss}), we have
\begin{equation}\label{cor}
	\mathbb{E}\langle W_{s_{p}}, \, \phi \rangle ^{2}= \int_{\mathcal{S}_{p}}\int_{X}\int_{X}{\log{|\langle a, \Gamma_{p}(x) \rangle|_{h_{p}}}\log{|\langle a, \Gamma_{p}(y) \rangle|_{h_{p}}}dd^{c}\phi(x)dd^{c}\phi(y)d\sigma_{p}(s)}.
\end{equation}
Now, writing
$$
|\Gamma_{p}(x)|_{h_{p}} := \Big(\sum_{j=1}^{d_{p}}{|S^{p}_{j}(x)|^{2}_{h_{p}}}\Big)^{1/2} = \sqrt{K_{\mathcal{S}_{p}}(x)}
$$
allows us to express \( \Gamma_{p}(x) \) as \( \Gamma_{p}(x) = |\Gamma_{p}(x)|_{h_{p}} u_{p}(x) \), where \( u_{p}(x) \) is a unit vector in the direction of \( \Gamma_{p}(x) \) such that \( |u_{p}(x)|_{h_{p}} = 1 \). Substituting \( \Gamma_{p}(x) = |\Gamma_{p}(x)|_{h_{p}} u_{p}(x) \) into the integrand in (\ref{cor}) breaks it into four terms:
\begin{align}
\log{|\Gamma_{p}(x)|_{h_{p}}}\log{|\Gamma_{p}(y)|_{h_{p}}}+ \log{|\Gamma_{p}(x)|_{h_{p}}}\log{| \langle a, u_{p}(y)\rangle|_{h_{p}}} +  \log{|\Gamma_{p}(y)|_{h_{p}}}\log{| \langle a, u_{p}(x)\rangle|_{h_{p}}}
 \label{forones1} \\ \label{forones} +\log{| \langle a, u_{p}(x)\rangle|_{h_{p}}}\log{| \langle a, u_{p}(y)\rangle|_{h_{p}}}. \nonumber
\end{align}

Before continuing with the variance estimate, we will see that \( \mathbb{E}\langle W_{s_{p}}, \, \phi \rangle \) is bounded. To do this, we prove an auxiliary inequality to begin with. First, by (\ref{hercomp}), we have
\begin{equation} \label{jens}
  \Big| \int_{X}{\log{|\Gamma_{p}(x)|_{h_{p}}} \,dd^{c}\phi(x)} \Big| \leq
	\int_{X}{\big|\log{|\Gamma_{p}(x)|_{h_{p}}}\big| \, \big|dd^{c}\phi(x) \big|} \leq \frac{b \, \| \phi \|_{\mathscr{C}^{2}}}{2} \int_{X}{\big|\log{K_{\mathcal{S}_{p}}(x)}\big| \,\omega^{n}} < \infty,
\end{equation}
since \( \log{K_{\mathcal{S}_{p}}(x)} \in L^{1}(X, \omega^{n}) \) for every \( p \geq 1 \). Now, it is evident that
\begin{align} \label{3..6}
	\big{|} \mathbb{E}\langle W_{s_{p}}, \phi \rangle \big{|} &= \Big| \int_{\mathcal{S}_{p}} \int_{X}{\Big(\log{|\Gamma_{p}(x)|_{h_{p}}} + \log{| \langle a, u_{p}(x)\rangle|_{h_{p}}}\Big)} \, dd^{c}\phi(x) \, d\sigma_{p}(s_{p}) \Big| \\ \nonumber 
	&\leq \int_{\mathcal{S}_{p}} \int_{X}{\big|\log{|\Gamma_{p}(x)|_{h_{p}}}\big| \, \big| dd^{c}\phi(x) \big| \, d\sigma_{p}(s_{p})} \\ \nonumber
	&\quad + \int_{\mathcal{S}_{p}} \int_{X}{\big| \log{| \langle a, u_{p}(x)\rangle|_{h_{p}}}\big|} \, \big| dd^{c}\phi(x) \big| \, d\sigma_{p}(s_{p}). \nonumber
\end{align}
 The first integral has an upper bound by (\ref{jens}), given that \(\sigma_{p}\) is a probability measure on \(H^{0}(X, L_{p})\). For the second double integral, we identify \(\mathcal{S}_{p}\simeq\mathbb{C}^{d_{p}}\). Using the moment condition \textbf{(C2)}, H\"{o}lder's inequality, and (\ref{hercomp}), we have
 \begin{equation} \label{2010}
 	\int_{X}\int_{\mathbb{C}^{d_{p}}}{\big{|}\log{| \langle a, \rho_{p}(x)\rangle|}\big{|}} \, d\sigma_{p}(a) \, \big| dd^{c}\phi(x) \big| \leq (C_{p})^{\frac{1}{\alpha}} \, b \, \| \phi \|_{\mathscr{C}^{2}} \, \mathrm{Vol}(X),
 	\end{equation}
 where
 $$\rho_{p}(x) = \left( \frac{f_{1}(x)}{\sqrt{\sum_{j=1}^{d_{p}}{|f_{j}(x)|^{2}}}}, \ldots, \frac{f_{d_{p}}(x)}{\sqrt{\sum_{j=1}^{d_{p}}{|f_{j}(x)|^{2}}}} \right). $$
 Applying Fubini-Tonelli’s theorem, we can write
 \begin{equation}\label{seci}
 	\int_{\mathcal{S}_{p}}\int_{X}{\big| \log{| \langle a, u_{p}(x)\rangle|_{h_{p}}} \big|} \, \big| dd^{c}\phi(x) \big| \, d\sigma_{p}(s_{p}) = \int_{X}\int_{\mathbb{C}^{d_{p}}}{\big| \log{| \langle a, \rho_{p}(x)\rangle|} \big|} \, d\sigma_{p}(a) \, \big| dd^{c}\phi(x) \big|,
 \end{equation}
 and thus we obtain a bound for the second integral.

 We now return to estimating the variance of \( W_{s_{p}} \). Expanding the term \( \langle \mathbb{E}[W_{s_{p}}], \, \phi \rangle^{2} \) using the expression for the expected distribution, we have
 \begin{equation*}
 	\big( \mathbb{E}\langle W_{s_{p}}, \phi \rangle \big)^{2} = J_{1} + 2 J_{2} + J_{3},
 \end{equation*}
 where
 \begin{align}
 	J_{1} &= \left( \int_{\mathcal{S}_{p}} \int_{X}{\log{|\Gamma_{p}(x)|_{h_{p}}} \, dd^{c}\phi(x) \, d\sigma_{p}(s_{p})} \right)^{2}, \label{J1} \\
 	J_{2} &= \left( \int_{\mathcal{S}_{p}} \int_{X}{\log{|\Gamma_{p}(x)|_{h_{p}}} \, dd^{c}\phi(x) \, d\sigma_{p}(s_{p})} \right) \times \left( \int_{\mathcal{S}_{p}} \int_{X}{\log{| \langle a, u_{p}(x)\rangle|_{h_{p}}} \, dd^{c}\phi(x) \, d\sigma_{p}(s_{p})} \right), \label{J2} \\
 	J_{3} &= \left( \int_{\mathcal{S}_{p}} \int_{X}{\log{| \langle a, u_{p}(x)\rangle|_{h_{p}}} \, dd^{c}\phi(x) \, d\sigma_{p}(s_{p})} \right)^{2}. \label{J3}
 \end{align}
 Since \( \mathbb{E}\langle W_{s_{p}}, \phi \rangle \) is bounded, it follows that \( J_{1}, J_{2} \), and \( J_{3} \) are all finite.

 From the expressions (\ref{forones1}), we write \( \mathbb{E}\langle W_{s_{p}}, \phi \rangle^{2} = B_{1} + 2B_{2} + B_{3} \), where
 \begin{align}
 	B_{1} &= \int_{\mathcal{S}_{p}} \int_{X} \int_{X}{\log{| \Gamma_{p}(x)|_{h_{p}}} \, \log{| \Gamma_{p}(y)|_{h_{p}}} \, dd^{c}\phi(x) \, dd^{c}\phi(y) \, d\sigma_{p}(s_{p})}, \label{B1} \\
 	B_{2} &= \int_{\mathcal{S}_{p}} \int_{X} \int_{X}{\log{|\Gamma_{p}(x)|_{h_{p}}} \, \log{| \langle a, u_{p}(y)\rangle|_{h_{p}}} \, dd^{c}\phi(x) \, dd^{c}\phi(y) \, d\sigma_{p}(s_{p})}, \label{B2} \\
 	B_{3} &= \int_{X} \int_{X}{dd^{c}\phi(y) \, dd^{c}\phi(x)} \int_{\mathbb{C}^{d_{p}}}{\log{|\langle a, \rho_{p}(x) \rangle|} \, \log{|\langle a, \rho_{p}(y) \rangle|} \, d\sigma_{p}(a)}. \label{B3}
 \end{align}
 From (\ref{jens}), the moment assumption \textbf{(C2)}, and Fubini-Tonelli’s theorem, we observe that \( B_{1}, B_{2} \), and \( B_{3} \) are all finite, and also that \( B_{1} = J_{1} \) and \( B_{2} = J_{2} \). Thus, the remaining terms are \( J_{3} \) and \( B_{3} \), and we find
 \begin{equation}\label{varip}
 	\mathrm{Var}\langle W_{s_{p}}, \phi \rangle = B_{3} - J_{3}.
 \end{equation}
 It suffices to estimate \( B_{3} \) from above to complete the variance estimation. Using Tonelli's theorem and H\"{o}lder's inequality with \( \frac{1}{\alpha} + \frac{1}{\beta} = 1 \), where \( \alpha \geq 2 \) is the constant satisfying the moment condition \textbf{(C2)}, we have
  \begin{align}
 	|B_{3}| &\leq \int_{X} \int_{X}{|dd^{c}\phi(y)| \, |dd^{c}\phi(x)|} \int_{\mathbb{C}^{d_{p}}}{\big| \log{|\langle a, \rho_{p}(x) \rangle|} \big| \, \big| \log{|\langle a, \rho_{p}(y) \rangle|} \big| \, d\sigma_{p}(a)} \notag \\
 	&\leq \int_{X} \int_{X}{|dd^{c}\phi(y)| \, |dd^{c}\phi(x)|} \left\{ \int_{\mathbb{C}^{d_{p}}}{\big| \log{|\langle a, \rho_{p}(x) \rangle|} \big|^{\alpha} d\sigma_{p}(a)} \right\}^{\frac{1}{\alpha}} \notag  \left\{ \int_{\mathbb{C}^{d_{p}}}{\big| \log{|\langle a, \rho_{p}(y) \rangle|} \big|^{\beta} d\sigma_{p}(a)} \right\}^{\frac{1}{\beta}} \notag \\
 	&\leq \int_{X} \int_{X}{|dd^{c}\phi(y)| \, |dd^{c}\phi(x)| \, (C_{p})^{\frac{1}{\alpha}}} \left\{ \int_{\mathbb{C}^{d_{p}}}{\big| \log{|\langle a, \rho_{p}(y) \rangle|} \big|^{\beta} d\sigma_{p}(a)} \right\}^{\frac{1}{\beta}}. \notag
 \end{align}
 Applying H\"{o}lder's inequality again to the innermost integral in the last line (since \( \alpha \geq 2 \geq \beta \) allows us to do so), we get
 \begin{equation}\label{lasth}
 	B_{3} \leq (C_{p})^{\frac{2}{\alpha}} \int_{X}\int_{X}{|dd^{c}\phi(y)| \, |dd^{c}\phi(x)|} \leq (C_{p})^{\frac{2}{\alpha}}\,\, (\int_{X}{|dd^{c} \phi|})^{2}.
 \end{equation}
 Consequently, applying the total variation inequality (\ref{hercomp}) for \( dd^{c} \) twice in (\ref{lasth}), we obtain the following inequality:
 \begin{equation}\label{A'}
 	B_{3} \leq (C_{p})^{2/\alpha} \, b^{2} \, \| \phi \|^{2}_{\mathscr{C}^{2}} \, \mathrm{Vol}(X)^{2},
 \end{equation}
 which, after dividing by $A^{2}_{p}$ and defining \( B_{\phi} := b\, \| \phi \|_{\mathscr{C}^{2}} \), provides the variance estimate as intended.
\end{proof}

For $\phi \in \mathcal{D}^{n-1, n-1}(X)$, from the arguments in the proof of Theorem \ref{cod1}, we have
\begin{equation*}\label{drr}
\mathbb{E}\langle [\widehat{Z}_{s_{p}}], \phi \rangle=\frac{1}{A_{p}} \int_{X} {c_{1}(L_{p}, h_{p}) \wedge \phi} + \int_{X}{\log{|K_{\mathcal{S}_{p}}(x)|_{h_{p}} dd^{c}\phi(x)}} + \int_{X}\int_{\mathcal{S}_{p}}{\log{|\langle a, u_{p}(x)\rangle|_{h_{p}} d\sigma_{p}(s_{p})dd^{c}\phi(x)}}.
\end{equation*}

Notice that even though $dd^{c} \phi$ is a signed measure, it is still possible to apply Fubini-Tonelli's theorem to interchange the integrals using the Jordan decomposition of $dd^{c} \phi$. Since $\log{\big|\langle a, u_{p}(x)\rangle\big|_{h_{p}}}= \log{\frac{|s_{p}(x)|_{h_{p}}}{\sqrt{K_{\mathcal{S}_{p}}(x)}}}$, it follows from this last expression that

\begin{equation}\label{excur}
	\mathbb{E}[\widehat{Z}_{s_{p}}]=\frac{1}{A_{p}}c_{1}(L_{p}, h_{p}) + \frac{1}{2A_{p}}dd^{c}\log K_{\mathcal{S}_{p}}(x)+\frac{1}{A_{p}}dd^{c} \big(\int_{s_{p} \in \mathcal{S}_{p}}{\log\big|\langle a, u_{p}(x)\rangle\big|_{h_{p}} d\sigma_{p}(s_{p})} \big),
\end{equation}which shows that $\mathbb{E}\langle [\widehat{Z}_{s_{p}}], \phi \rangle= \langle \mathbb{E}[\widehat{Z}_{s_{p}}], \phi \rangle$, that is, the expected value of a smooth linear statistic can be seen as a current, also referred to as the expected current of integration, which is a positive, closed $(1, 1)$-current.

Let $(L, h)$ be a $\mathscr{C}^2$-Hermitian line bundle over $X$ and $\mathcal{S} \subset H^{0}(X, L)$. Suppose that the base locus $\operatorname{Bs}(\mathcal{S})$ is empty. We observe that, for $s\in \mathcal{S}$, $\mathbb{E}[Z_{s}]$ is of Bedford-Taylor class (and so the wedge product of these currents in our setting is well-defined). To see this, given an orthonormal basis $\{S_{j}\}^{d}_{j=1}$ of $\mathcal{S}$ and $s\in \mathcal{S} \simeq \mathbb{C}^d$ with $s=\sum_{j=1}^{d}{a_{j} S_{j}}$, we look at the local picture, i.e., holomorphic frame representation $S_{j}=f_{j}\,e$ on some open set $U$, where each $f_{j}$ is a holomorphic function on $U$ for $j=1, \ldots, d$, and $e$ is a non-vanishing holomorphic section with $|e|_{h}=e^{-\varphi}$ for some $\mathscr{C}^{2}$ weight function $\varphi$ on $U$. Then $s=\langle a, f \rangle \,e$, where $f=(f_{1}, \ldots, f_{d})$. Let us write $F(x)=\int_{a\in \mathbb{C}^d}{\log| \langle a, f(x)\rangle| d\sigma(a)}$. $F$ is (locally uniformly) bounded on $U$ since the Bergman kernel is locally uniformly bounded (since the base locus is empty), $\varphi$ is $\mathscr{C}^2$ and \textbf{(C2)} holds, so, by Fubini-Tonelli's theorem we first have the following
\begin{equation}\label{lpf}
\int_{s\in \mathcal{S}}{\langle [Z_{s}], \phi \rangle d\sigma(s)}=\int_{a\in \mathbb{C}^d}\int_{X}{\log{|\langle a, f(x) \rangle|} \,dd^c\phi \, d\sigma(a)}= \langle \, dd^c \, \Big(\int_{a\in \mathbb{C}^d}{\log| \langle a, f(x)\rangle| d\sigma(a)} \Big), \phi \rangle.
\end{equation} Since expected current is positive (and closed), it follows from (\ref{lpf}) and basic pluripotential theory (see, for example, \cite[Theorem 4.15]{BrTr07}), the non-negativity of $dd^c F$ implies that there is a plurisubharmonic function $G$ on $U$ such that $F=G$ almost everywhere, so $\mathbb{E}[Z_{s}]=dd^c G=dd^c F$.

We present some lemmata, one of them is the following concerning cohomology classes of integration currents that will be instrumental in the sequel. This lemma has been previously proven as part of \cite[Theorem 3.1]{Shif}, the relation $(35)$ there, and for the sake of the reader, we give its proof here.

\begin{lem}\label{coh_lemma}
	Let $(X,\omega)$ be a compact K\"{a}hler manifold with the fixed K\"{a}hler form $\omega.$ If $s_{j} \in \mathcal{S}_{j} \subset H^{0}(X, L_{j}), \,\,j=1, \ldots, k$, are smooth and intersect transversally, then for $\Sigma^{k}=(s_1, \ldots, s_k)$, we have
	\begin{equation}\label{cohol}
		\left \langle \big[Z_{\Sigma^{k}}\big], \omega^{n-k} \right \rangle = \int_{X}c_{1}(L_{1}, h_{1})\wedge \cdots \wedge c_{1}(L_{k}, h_{k}) \wedge \omega^{n-k}.
	\end{equation}
	\begin{proof}
		For $k=1,$ this is just a consequence of Poincar\'{e}-Lelong formula and the fact that $\omega$ is a closed form. Indeed,
		\begin{equation*}
			\left \langle \big[Z_{s_{1}}\big], \omega^{n-1} \right \rangle = \int_{X}c_{1}(L_{1}, h_{1})\wedge \omega^{n-1}+\int_{X} dd^{c}\log |s_{1}|_{h_{1}}\wedge\omega^{n-1}=\int_{X}c_{1}(L_{1}, h_{1})\wedge \omega^{n-1}.
		\end{equation*}
		Let us now suppose that the assertion (\ref{cohol}) is true for $k-1$ sections $\Sigma^{k-1}=(s_{2},\ldots, s_{k}).$ Then by the induction hypothesis and the base  step of induction,  it yields that
		\begin{align*}
				 \left \langle \big[Z_{s_{1}}\cap Z_{\Sigma^{k-1}}\big], \omega^{n-k} \right \rangle &= \int_{Z_{s_{1}}}c_{1}(L_{2}, h_{2})\wedge \cdots \wedge c_{1}(L_{k}, h_{k}) \wedge \omega^{n-k}\\
			&=\int_{X} c_{1}(L_{1}, h_{1}) \wedge c_{1}(L_{2}, h_{2})\wedge \cdots \wedge c_{1}(L_{k}, h_{k}) \wedge\omega^{n-k} \end{align*}
	Since $\big[Z_{\Sigma^{k}}\big]= \big[Z_{s_{1}}\cap Z_{\Sigma^{k-1}}\big]$ by our assumption, we complete the proof.	\end{proof}
\end{lem}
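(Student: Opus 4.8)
The plan is to argue by induction on the codimension $k$, exactly mirroring the structure already begun in the statement's base step: reduce a $k$-fold transversal intersection to a $(k-1)$-fold one living on the smooth hypersurface $Z_{s_1}$, and then push the resulting integral back up to $X$ through the Poincar\'{e}--Lelong formula.

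First I would dispose of the base case $k=1$ using (\ref{lelpoi}) to write $[Z_{s_1}] = c_1(L_1,h_1) + dd^c\log|s_1|_{h_1}$ and pairing with $\omega^{n-1}$; the correction term $\int_X dd^c\log|s_1|_{h_1}\wedge\omega^{n-1}$ vanishes after integrating by parts (Stokes), since $\omega$, and hence $\omega^{n-1}$, is closed. This is the computation already displayed in the base step.

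For the inductive step, assuming the identity for any $k-1$ transversally meeting smooth sections on an arbitrary compact K\"{a}hler manifold, I would view $Z_{s_1}$ as a compact K\"{a}hler manifold of complex dimension $n-1$ equipped with the restricted K\"{a}hler form $\omega|_{Z_{s_1}}$. Transversality of the full system ensures the restricted sections $s_2|_{Z_{s_1}}, \dots, s_k|_{Z_{s_1}}$ are smooth and meet transversally inside $Z_{s_1}$, so the induction hypothesis applies with $(n,k)$ replaced by $(n-1,k-1)$ (note $(n-1)-(k-1)=n-k$). Since Chern curvature forms restrict functorially, $c_1(L_j,h_j)|_{Z_{s_1}} = c_1(L_j|_{Z_{s_1}}, h_j|_{Z_{s_1}})$, this yields the first displayed equality of the induction. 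To convert $\int_{Z_{s_1}}$ into $\int_X$, I would set $\eta := c_1(L_2,h_2)\wedge\cdots\wedge c_1(L_k,h_k)\wedge\omega^{n-k}$, a closed $(n-1,n-1)$-form on $X$, write $\int_{Z_{s_1}}\eta = \langle[Z_{s_1}],\eta\rangle$, apply (\ref{lelpoi}) once more, and discard the $dd^c$-term by Stokes because $\eta$ is closed; what remains is exactly $\int_X c_1(L_1,h_1)\wedge\eta$, the claimed identity on $X$.

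I expect the main difficulty to be the bookkeeping for the restriction rather than any genuine computation: one must confirm that $Z_{s_1}$ is smooth (from smoothness of $s_1$ together with transversality), that restriction preserves both smoothness and transversality of the remaining sections so the induction hypothesis is legitimately available, and above all that the $k$-fold intersection current on $X$, paired against $\omega^{n-k}$, coincides with the $(k-1)$-fold intersection current computed intrinsically on $Z_{s_1}$ paired against $\omega^{n-k}|_{Z_{s_1}}$. Once this identification of $[Z_{\Sigma^k}]$ with $[Z_{s_1}\cap Z_{\Sigma^{k-1}}]$ is in place, the two Stokes applications are immediate, since every form appearing is a wedge of closed forms.
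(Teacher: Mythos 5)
Your proposal is correct and follows essentially the same route as the paper's proof: induction on the codimension $k$, with the base case handled by the Poincar\'{e}--Lelong formula together with Stokes' theorem, and the inductive step obtained by applying the hypothesis on the hypersurface $Z_{s_1}$ and then invoking the base case once more for the closed form $c_{1}(L_{2}, h_{2})\wedge \cdots \wedge c_{1}(L_{k}, h_{k}) \wedge \omega^{n-k}$. The bookkeeping you flag (functorial restriction of the curvature forms, transversality of the restricted system, and the identification $[Z_{\Sigma^{k}}]=[Z_{s_{1}}\cap Z_{\Sigma^{k-1}}]$) is precisely what the paper leaves implicit.
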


\vspace{1mm}

\subsection{Random Systems}\label{sc3.1}
Let $(L_{p, j}, h_{p, j})$ be sequences of positive $\mathscr{C}^{2}$-Hermitian holomorphic line bundles for $j=1, \cdots, k$, over $X$. Fix some large $p_{0} \in \mathbb{N}$ so that the base locus is empty. Let $\sigma_{p}^{k}:=\sigma_{p, 1} \times \cdots \times \sigma_{p, k}$ be the product measure on $\mathcal{S}_{p, 1} \times \cdots \times \mathcal{S}_{p, k}$, where $S_{p, j}$ is a (finite) dimensional subspace of $H^{0}(X, L_{p, j})$ for $j=1, \ldots, k$. Consider the set $\mathcal{B}$ of systems $\Sigma^k_{p}=(s^{1}_{p}, \ldots, s^{k}_{p})$ with the property that $Z_{s^{j}_{p}}$ is smooth for $j=1, \ldots, k$. By the classical Bertini's theorem, \textbf{(C1)} and the product measure, we get $\sigma^{k}_{p}(\mathcal{B})=1$. Also, by Proposition \ref{bertiniseveral}, the set $\mathcal{A}$ of systems $\Sigma^{k}_{p}=(s^{1}_{p}, \cdots, s^{k}_{p})$ such that the individual zero sets $Z_{s^{j}_{p}}$ are in general position has full $\sigma^{k}_{p}$-measure. Therefore, we immediately have that $\sigma^k_{p}(\mathcal{A} \cap \mathcal{B})=1$, that is, for $\sigma^{k}_{p}$-almost all $(s^{1}_{p}, \ldots, s^{k}_{p}) \in \mathcal{S}_{p, 1} \times \ldots \times \mathcal{S}_{p, k}$, \,$Z_{\Sigma_{p}^{k}}$ is a smooth complete intersection of $Z_{s^{j}_{p}}$, i.e., it is a compact complex submanifold of codimension $k$. We shall continue to refer to such elements as \textit{typical} as in the Introduction. Proposition \ref{wdberti} gives that $[Z_{\Sigma^{k}_{p}}]:= [Z_{s^{1}_{p}}] \wedge \cdots \wedge [Z_{s^{k}_{p}}]$ is well-defined and is equal to the current of integration with multiplicities over the complete intersection $Z_{\Sigma^{k}_{p}}$ of pure dimension $n-k$. Let $\Sigma^{k}_{p}=(s^{1}_{p}, s^{2}_{p},\ldots , s^{k}_{p})$ be such a typical system of independent random holomorphic sections $s_{p}^{j}\in \mathcal{S}_{p, j} \subset H^{0}(X,L_{p, j})$ for $j=1,2, \cdots,k\,$ where $1\leq k \leq \textup{dim}_{\mathbb{C}}X=n$. Let $\phi \in \mathcal{D}^{n-k, n-k}(X)$. Then, by applying Lemma \ref{coh_lemma}, we get
\begin{equation}\label{uniboex}
\big|\big \langle \big[Z_{\Sigma_{p}^{k}}\big], \phi \big \rangle\big|=\Big|\int_{Z_{\Sigma_{p}^{k}}}\phi \, \Big|\leq \sup \left \| \phi \right \| \int_{Z_{\Sigma_{p}^{k}}}{\omega ^{n-k}} \leq \sup \left \| \phi \right \| \,\int_{X}c_{1}(L_{p, 1}, h_{p, 1})\wedge \cdots \wedge c_{1}(L_{p, k}, h_{p, k}) \wedge \omega^{n-k},\end{equation}
which means that $\big \langle \big[Z_{\Sigma_{p}^{k}}\big], \phi \big \rangle$ is bounded  for  almost all $\Sigma_{p}^{k} .$ Consequently,  $\mathbb{E}\langle [Z_{\Sigma_{p}^{k}}], \phi \rangle$ is well-defined.

The currents, such as $[Z_{s^1_{p}}] \wedge \mathbb{E}_{\sigma_{p, 2}}[Z_{s^2_{p}}]$, are also well-defined by \cite[Chapter 3, Corollary 4.11]{Dem12}, so \begin{equation} \label{restt}\langle [Z_{s^1_{p}}] \wedge \mathbb{E}_{\sigma_{p, 2}}[Z_{s^2_{p}}], \phi \rangle =\int_{Z_{s^1_{p}}}{\mathbb{E}_{\sigma_{p, 2}}[Z_{s^2_{p}}] \wedge \phi}.\end{equation}

\begin{thm} \label{indpw}

Let $(X,\omega)$ be a compact complex manifold of dimension $n$, and fix $1\le k\le n$.
For each $p\geq 1$, let $(L_{p,j},h_{p,j})\to X$ ($j=1,\dots,k$) be sequences of $\mathscr{C}^{2}$-Hermitian holomorphic line bundles over $X$, and let $\mathcal{S}_{p,j}\subset H^0(X,L_{p,j})$ be subspaces endowed with the probability measures $\sigma^{j}_{p}$ satisfying \textbf{(C2)} and \textbf{(C1)}. Let $\phi \in \mathcal{D}^{n-k, n-k}(X)$. Assume that for every $j=1,\ldots,k$,
the linear system $\mathcal{S}_{p,j}$ is base-point free, i.e. $\operatorname{Bs}(\mathcal{S}_{p,j})=\varnothing$, for large values of $p$. Then, for large enough values of $p\in \mathbb{N}$ and independent random systems $\Sigma^k_{p}=(s^{1}_{p}, \ldots, s^{k}_{p})\in \mathcal{S}_{p,1} \times \ldots \times \mathcal{S}_{p, k}$, the expected simultaneous zero current satisfies
$$
\mathbb{E}_{\sigma_{p,1}\times\cdots\times\sigma_{p,k}}\!\big\langle [Z_{\Sigma^{k}_{p}}], \phi \big\rangle
\;=\; \,\!\big\langle \bigwedge_{j=1}^{k} \mathbb{E}_{\sigma_{p,j}}[Z_{\,s^{j}_{p}}] ,\phi \big\rangle:= \langle \mathbb{E}_{\sigma_{p,1}\times\cdots\times\sigma_{p,k}}\big[Z_{\Sigma^{k}_{p}}\big], \phi \rangle,
$$where, as was given in (\ref{excur}), \begin{equation*}
	\mathbb{E}_{\sigma_{p, j}}[Z_{s^{j}_{p}}]=c_{1}(L_{p, j}, h_{p, j}) + \frac{1}{2}dd^{c}\log K_{\mathcal{S}_{p, j}}(x)+dd^{c} \big(\int_{s^{j}_{p} \in \mathcal{S}_{p, j}}{\log\big|\langle a, u^{j}_{p}(x)\rangle\big|_{h_{p, j}} d\sigma_{p, j}(s^{j}_{p})} \big),
\end{equation*} As a result, $\mathbb{E}_{\sigma_{p,1}\times\cdots\times\sigma_{p,k}}\big[Z_{\Sigma^k_{p}}\big]$ is a positive, closed $(k, k)$-current.

\end{thm}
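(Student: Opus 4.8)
The plan is to prove the identity by induction on the codimension $k$, isolating the content into two interchange steps: commuting the expectation in one fixed section past a fixed closed positive current, and commuting the expectation in the remaining system past a fixed closed positive $(1,1)$-current with bounded potential. The base case $k=1$ is exactly the computation recorded in (\ref{excur}) and the Bedford--Taylor discussion following it, which already shows that $\mathbb{E}_{\sigma_{p,1}}[Z_{s^1_p}]=c_1(L_{p,1},h_{p,1})+dd^c G_1$, where $G_1:=\mathbb{E}_{\sigma_{p,1}}\log|s^1_p|_{h_{p,1}}$ is a locally bounded potential by \textbf{(B)} and \textbf{(C2)}, so that $\mathbb{E}_{\sigma_{p,1}}[Z_{s^1_p}]$ is a positive closed $(1,1)$-current of Bedford--Taylor class.

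For the inductive step I would write $\Sigma^k_p=(s^1_p,\Sigma')$ with $\Sigma'=(s^2_p,\dots,s^k_p)$ and use the product structure of $\sigma_{p,1}\times\cdots\times\sigma_{p,k}$ to factor $\mathbb{E}_{\sigma_{p,1}\times\cdots\times\sigma_{p,k}}=\mathbb{E}_{\sigma'}\mathbb{E}_{\sigma_{p,1}}$; this iteration is legitimate since $\langle[Z_{\Sigma^k_p}],\phi\rangle$ is uniformly bounded over typical systems by (\ref{uniboex}), so Fubini--Tonelli applies. The first (inner) interchange fixes a typical $\Sigma'$, for which $[Z_{\Sigma'}]$ is the closed positive integration current over a smooth codimension-$(k-1)$ submanifold. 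Applying Poincar\'e--Lelong (\ref{lelpoi}) to $s^1_p$, wedging with $[Z_{\Sigma'}]$, pairing with $\phi$ and integrating by parts on the compact submanifold $Z_{\Sigma'}$ gives
\[
\langle[Z_{s^1_p}]\wedge[Z_{\Sigma'}],\phi\rangle=\langle c_1(L_{p,1},h_{p,1})\wedge[Z_{\Sigma'}],\phi\rangle+\int_{Z_{\Sigma'}}\log|s^1_p|_{h_{p,1}}\,dd^c\phi .
\]
Pulling $\mathbb{E}_{\sigma_{p,1}}$ inside $\int_{Z_{\Sigma'}}(\cdot)\,dd^c\phi$ by Fubini--Tonelli, with integrability furnished by \textbf{(C2)} and \textbf{(B)} exactly as in (\ref{jens})--(\ref{2010}), replaces $\log|s^1_p|_{h_{p,1}}$ by $G_1$ and yields $\mathbb{E}_{\sigma_{p,1}}\big([Z_{s^1_p}]\wedge[Z_{\Sigma'}]\big)=\mathbb{E}_{\sigma_{p,1}}[Z_{s^1_p}]\wedge[Z_{\Sigma'}]$ for every typical $\Sigma'$.

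It remains to carry out the second (outer) interchange with $R:=\mathbb{E}_{\sigma_{p,1}}[Z_{s^1_p}]=c_1(L_{p,1},h_{p,1})+dd^c G_1$ now held fixed. Splitting $R$ into its smooth part $c_1(L_{p,1},h_{p,1})$ and $dd^c G_1$, the smooth part commutes with $\mathbb{E}_{\sigma'}$ trivially, since it amounts to pairing the current with the smooth form $c_1(L_{p,1},h_{p,1})\wedge\phi$; for the remaining part one uses the Bedford--Taylor integration by parts $\langle dd^c G_1\wedge[Z_{\Sigma'}],\phi\rangle=\langle G_1[Z_{\Sigma'}],dd^c\phi\rangle$ and interchanges $\mathbb{E}_{\sigma'}$ with the pairing against the bounded form $G_1\,dd^c\phi$. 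Invoking the inductive hypothesis $\mathbb{E}_{\sigma'}[Z_{\Sigma'}]=\bigwedge_{j=2}^{k}\mathbb{E}_{\sigma_{p,j}}[Z_{s^j_p}]$, this produces $\mathbb{E}_{\sigma'}\big(R\wedge[Z_{\Sigma'}]\big)=R\wedge\bigwedge_{j=2}^{k}\mathbb{E}_{\sigma_{p,j}}[Z_{s^j_p}]=\bigwedge_{j=1}^{k}\mathbb{E}_{\sigma_{p,j}}[Z_{s^j_p}]$. Chaining the two interchanges proves the asserted identity, and the final positivity and closedness of $\bigwedge_{j=1}^{k}\mathbb{E}_{\sigma_{p,j}}[Z_{s^j_p}]$ follow at once from Bedford--Taylor theory, each factor being a positive closed $(1,1)$-current with locally bounded potential.

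I expect the main obstacle to be the outer interchange, namely justifying that $\mathbb{E}_{\sigma'}$ commutes with wedging against the fixed current $R$ whose potential $G_1$ is only bounded rather than smooth. This requires the expected current $\mathbb{E}_{\sigma'}[Z_{\Sigma'}]$ --- which by induction is a positive closed current with locally bounded potential, hence carries a genuine positive trace measure --- to be paired with the merely bounded Borel test object $G_1\,dd^c\phi$, and the commutation to be obtained by approximating $G_1$ from above by smooth plurisubharmonic functions, using the weak continuity of Bedford--Taylor products under decreasing uniformly bounded sequences together with dominated convergence for the probabilistic average, the finiteness of all relevant masses being guaranteed by (\ref{uniboex}).
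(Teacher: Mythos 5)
Your argument is correct and follows essentially the same route as the paper: induction on the codimension $k$, peeling off one section via the decomposition $[Z_{\Sigma^{k}_{p}}]=[Z_{s^{1}_{p}}]\wedge[Z_{\Sigma^{k-1}_{p}}]$, Fubini--Tonelli justified by the uniform cohomological bound (\ref{uniboex}), and the $k=1$ identity (\ref{excur}) as the base case. The only difference is the order of integration --- you integrate out $s^{1}_{p}$ first (restricting to $Z_{\Sigma'}$) and then $\Sigma'$, whereas the paper restricts to $Z_{s^{1}_{p}}$, applies the induction hypothesis there, and integrates over $s^{1}_{p}$ last --- and the technical point you correctly flag, namely commuting one expectation past a wedge with a fixed non-smooth positive closed current with bounded potential, arises symmetrically in the paper's final step and is resolved there by the same appeal to Bedford--Taylor theory.
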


\begin{proof}

	We induct on the codimension $k.$ The base step $k=1$ follows from (\ref{excur}). Assume that the formula holds for $k-1$ sections, that is for any typical system $\Sigma^{k-1}_{p}=(s_{p}^{2},\dots,s_{p}^{k}) \in \mathcal{S}_{p, 2} \times \ldots \times \mathcal{S}_{p, k}$. Let $\phi \in \mathcal{D}^{n-k, n-k}(X)$. Take a typical system $\Sigma^{k}_{p}=(s_{p}^{1}, \dots, s_{p}^{k}) \in \mathcal{S}_{p, 1} \times \ldots \times \mathcal{S}_{p, k}$. Write $\sigma^{k}_{p}:= \sigma_{p, 1} \times \ldots \times \sigma_{p, k}$ and $\sigma^{k-1}_{p}:= \sigma_{p, 2} \times \ldots \times \sigma_{p, k}$as above for short. It will be enough to show that $ \mathbb{E}_{\sigma^{k}_{p}}\big[Z_{\Sigma^{k}_{p}}\big]=\mathbb{E}_{\sigma_{p, 1}}[Z_{s^1_{p}}] \wedge  \mathbb{E}_{\sigma^{k-1}_{p}}\big[Z_{\Sigma^{k-1}_{p}}\big]$. By using the wedge product decomposition $\langle [Z_{\Sigma^{k}_{p}}], \phi \rangle =\langle [Z_{s^{1}_{p}}] \wedge [Z_{\Sigma^{k-1}_{p}}], \phi \rangle$, definition of a current of integration, the Fubini-Tonelli theorem and the induction hypothesis, we have
	\begin{align*}
		\int_{\mathcal{S}_{p, 2} \times \cdots \times \mathcal{S}_{p, k}}\big \langle \big[Z_{\Sigma_{p}^{k}}\big], \phi \big \rangle d\sigma^{k-1}_{p}(\Sigma_{p}^{k-1})&=\int_{\mathcal{S}_{p, 2} \times \cdots \times \mathcal{S}_{p, k}}\left \langle [Z_{s_{p}^{1}}] \wedge [Z_{\Sigma_{p}^{k-1}}], \phi \right \rangle d\sigma^{k-1}_{p}(\Sigma_{p}^{k-1})\\
		&= \int_{\mathcal{S}_{p, 2} \times \cdots \times \mathcal{S}_{p, k}} \big(\int_{Z_{s_{p}^{1}}} { [Z_{\Sigma_{p}^{k-1}}]} \,\wedge \phi \big) \, d\sigma_{p}^{k-1}(\Sigma_{p}^{k-1})   \\
		&=\int_{Z_{s^{1}_{p}}}{\mathbb{E}_{\sigma^{k-1}_{p}}\big[Z_{\Sigma^{k-1}_{p}}\big] \wedge \phi}\\
        &=\big \langle [Z_{s_{p}^{1}}] \wedge \mathbb{E}_{\sigma^{k-1}_{p}}[Z_{\Sigma_{p}^{k-1}}], \phi \big \rangle, \nonumber \\
	\end{align*}where, in the last equality, we have used (\ref{restt}) with $\mathbb{E}_{\sigma_{p, 2} \times \cdots \times \sigma_{p, k}}[Z_{\Sigma^{k-1}_{p}}]$. Finally, integrating over all $s_{p}^{1}$ with respect to the measure $\sigma_{p, 1}$ in the last expression and using the initial case of induction one more time give what is desired.

\end{proof} Now we go on with the proof of the variance estimate in higher codimensions. We adapt the methods in \cite[Theorem 3.1]{Shif} into our setting.

\begin{thm} \label{genvar}
Let $L_{p, 1}, \ldots, L_{p, k}, \,\,k\in \{1, \ldots, n\}$, be sequences of $\mathscr{C}^{2}$-Hermitian holomorphic line bundles on a compact K\"{a}hler manifold $(X, \omega)$. Assume that the subspaces $\mathcal{S}_{p, j} \subset H^{0}(X, L_{p, j}),\,j=1,\ldots, k$, equipped with the probability measures $\sigma_{p, j}$ satisfy \textbf{(C1)-(C2)}, and that for every $j=1,\ldots,k$,
the linear system $\mathcal{S}_{p,j}$ is base-point free, i.e. $\operatorname{Bs}(\mathcal{S}_{p,j})=\varnothing$ for large values of $p$. Then for $\Sigma^{k}_{p}=(s_{p}^{1}, \ldots, s_{p}^{k})\in \mathcal{S}_{p, 1} \times \ldots \times \mathcal{S}_{p, k}$ with random sections $s_{p}^{1}, \ldots, s_{p}^{k}$ selected independently with respect to the product probability measure $\sigma_{p, 1} \times \cdots \times \sigma_{p, k}$, we have

\begin{equation} \label{varra}\mathrm{Var} \langle [Z_{\Sigma_{p}^{k}}], \phi \rangle \leq (C_{p})^{2/\alpha}\,(B_{\phi})^{2}(\int_{X}{\omega^{n-k+1} \wedge \sum_{\beta=1}^{k}{\Big[ \prod_{1\leq j \leq k, j \neq \beta}{c_{1}(L_{p, j}, h_{p, j})}\Big]}\,})^{2}\end{equation}

\end{thm}

\begin{proof}

Pick a typical system of $k$ independent random holomorphic sections
$\Sigma_p^{k}=(s_p^{1},\ldots,s_p^{k})\in \mathcal{S}_{p,1}\times\cdots\times \mathcal{S}_{p,k}$ and write
$\Sigma_p^{k}=(\Sigma_p^{k-1},s_p^{k})$ with $\Sigma_p^{k-1}=(s_p^{1},\ldots,s_p^{k-1})$.
For typical choices one has
$[Z_{\Sigma_p^{k}}]=[Z_{\Sigma_p^{k-1}}]\wedge [Z_{s_p^{k}}]$.
Let $\phi\in \mathcal{D}^{n-k,n-k}(X)$ be a test form.

By Theorem \ref{indpw}, we have
$$
\mathbb{E}[Z_{\Sigma_p^{k}}]=\mathbb{E}[Z_{\Sigma_p^{k-1}}]\wedge \mathbb{E}[Z_{s_p^{k}}].
$$
Hence
\begin{equation}\label{varidef}
\begin{aligned}
\mathrm{Var}\big(\langle [Z_{\Sigma_p^{k}}],\phi\rangle\big)
&=\mathbb{E}\,\langle [Z_{\Sigma_p^{k}}],\phi\rangle^{2}
-\big(\mathbb{E}\,\langle [Z_{\Sigma_p^{k}}],\phi\rangle\big)^{2} \\
&=\mathbb{E}\,\langle [Z_{\Sigma_p^{k-1}}]\wedge [Z_{s_p^{k}}],\phi\rangle^{2}
-\Big(\big\langle \mathbb{E}[Z_{\Sigma_p^{k-1}}]\wedge \mathbb{E}[Z_{s_p^{k}}],\phi\big\rangle\Big)^{2}.
\end{aligned}
\end{equation}
We write
$$
\langle [Z_{\Sigma_p^{k-1}}]\wedge [Z_{s_p^{k}}],\phi\rangle^{2}
-\big\langle \mathbb{E}[Z_{\Sigma_p^{k-1}}]\wedge \mathbb{E}[Z_{s_p^{k}}],\phi\big\rangle^{2}
= I_1+ \ldots +I_k,
$$
where

\begin{align}
I_k(\Sigma_p^{k-1},s^{k}_{p}) &:=\langle [Z_{\Sigma_p^{k-1}}]\wedge [Z_{s_p^{k}}],\phi\rangle^{2}
-\langle [Z_{\Sigma_p^{k-1}}]\wedge \mathbb{E}[Z_{s_p^{k}}],\phi\rangle^{2}, \\ \nonumber
I_{k-1}(\Sigma_p^{k-2},s^{k-1}_{p}) &:=\langle [Z_{\Sigma_p^{k-2}}]\wedge [Z_{s^{k-1}_{p}}] \wedge \mathbb{E}[Z_{s_p^{k}}],\phi\rangle^{2}
-\big(\langle [Z_{\Sigma_p^{k-2}}]\wedge \mathbb{E}[Z_{s^{k-1}_{p}}] \wedge \mathbb{E}[Z_{s_p^{k}}],\phi\rangle\big)^{2}, \\ \nonumber
\vdots \\ \nonumber
I_{1}(s^{1}_{p}) &:= \langle [Z_{s_p^{1}}]\wedge \mathbb{E}[Z_{s_p^{2}}] \wedge \cdots \wedge \mathbb{E}[Z_{s^{k}_{p}}],\phi\rangle^{2}
-\langle \mathbb{E}[Z_{s_p^{1}}]\wedge \mathbb{E}[Z_{s^{2}_{p}}] \wedge \cdots \wedge \mathbb{E}[Z_{s_p^{k}}],\phi\rangle^{2}. \nonumber
\end{align}
These quantities are well-defined for typical choices (see the proof of Theorem \ref{indpw}).
Set $\sigma_p^{k-1}:=\sigma_{p,1}\times\cdots\times \sigma_{p,k-1}$.
By taking expectations in the above identity, we have
\begin{equation}\label{sumvar}
\mathrm{Var}\big(\langle [Z_{\Sigma_p^{k}}],\phi\rangle\big)=\mathbb{E}[I_1]+\mathbb{E}[I_2] + \cdots + \mathbb{E}[I_{k}].
\end{equation}

\noindent\textbf{Estimation of $\mathbb{E}[I_k]$:}

Let us fix $\Sigma_p^{k-1}$ and put
$$
T:=[Z_{\Sigma_p^{k-1}}]=[Z_{s_p^{1}}]\wedge\cdots\wedge [Z_{s_p^{k-1}}],
$$
a positive closed current of bidegree $(k-1,k-1)$.
Then
\begin{equation}\label{condvarIk}
\int_{\mathcal{S}_{p,k}} I_k(\Sigma_p^{k-1},s_p^{k})\,d\sigma_{p,k}(s_p^{k})
=
\mathrm{Var}_{\sigma_{p, k}}\Big(\big\langle [Z_{s_p^{k}}]\wedge T,\phi\big\rangle\Big).
\end{equation}

\smallskip

Define the random variable on $(\mathcal{S}_{p,k},\sigma_{p,k})$ by
$$
X(s_p^{k})
:=\big\langle [Z_{s_p^{k}}]\wedge T ,\phi\big\rangle .
$$
Set
$$
\mu:=T \wedge dd^c\phi.
$$

\smallskip

We do not apply Theorem~\ref{cod1} directly (since $T$ need not be smooth), instead,
we follow the idea of the proof of Theorem~\ref{cod1} with the signed measure
$\mu$ in place of $dd^c\phi$.
Since $T$ is closed and independent of the randomness in $s_p^k$, the Poincar\'e--Lelong formula (\ref{lelpoi}) and Demailly's theory (see \cite[Chapter III, Section 4]{Dem12}) show that $X$ is equal to a deterministic constant plus
$\int_X \log|s_p^k|_{h_{p,k}}\, \mu$.
Decomposing $\log|s_p^k|_{h_{p,k}}$ as in the proof of Theorem~3.1 and using the same arguments with the moment condition
\textbf{(C2)}, we obtain
\begin{equation}\label{cod1psi-ell}
\mathrm{Var}_{\sigma_{p,k}}\!(X)
=
\mathrm{Var}_{\sigma_{p,k}}\!\left(\big\langle [Z_{s_p^{k}}]\wedge T, \phi\big\rangle\right)
\le (C_p)^{2/\alpha}\left(\int_X |\mu|\right)^2
=
(C_p)^{2/\alpha}\left(\int_X \big|T \wedge dd^c\phi\big|\right)^2,
\end{equation} which yields, by Lemma \ref{coh_lemma} and (\ref{hercomp}),
\begin{equation}\label{VarXl-bound}
\mathrm{Var}_{\sigma_{p,k}}\!\left(\big\langle [Z_{s_p^{k}}]\wedge T,\phi\big\rangle\right)
\le (C_p)^{2/\alpha}(B_\phi)^{2}
\left(
\int_X \omega^{n-k+1}\wedge \bigwedge_{j=1}^{k-1}c_1(L_{p,j},h_{p,j})
\right)^{2}.
\end{equation}

Together with \eqref{condvarIk}, (\ref{VarXl-bound}) and the fact that $\sigma_p^{k-1}$ is a probability measure, this gives
the desired estimate for $\mathbb{E}[I_k]$:
\begin{equation}\label{eik}
\begin{aligned}
\mathbb{E}[I_k]
&=\int_{\mathcal{S}_{p,1}\times\cdots\times\mathcal{S}_{p,k-1}}
\int_{\mathcal{S}_{p,k}} I_k(\Sigma_p^{k-1},s_p^{k})\,d\sigma_{p,k}(s_p^{k})\,d\sigma_p^{k-1}(\Sigma_p^{k-1}) \\
&\le (C_p)^{2/\alpha}(B_\phi)^{2}
\left(\int_X \omega^{n-k+1}\wedge \bigwedge_{j=1}^{k-1}c_1(L_{p,j},h_{p,j})\right)^{2}.
\end{aligned}
\end{equation}

\medskip

\vspace{5mm}

\noindent\textbf{Estimations of $\mathbb{E}[I_j], \,j=1, \ldots, k-1$:} All estimations can be done in the same way by using Theorem \ref{indpw}, so we will treat just one of them in detail. To this end, consider the term $I_{1}$. Let us write $\Sigma^{k-1}_{p}:= (s^{2}_{p}, \ldots, s^{k}_{p})$ only for this argument, the reader should note that, as observed above, this notation actually indicates that the sections decrease from the end, however, for the sake of brevity, we introduce this minor modification of the notation in this part of the proof and accordingly let $[Z_{\Sigma^{k-1}_{p}}]=[Z_{s^{2}_{p}}] \wedge \cdots \wedge [Z_{s_{p}^{k}}]$. In order to get the upper bound for $\mathbb{E}[I_{1}]$, first observe, by Theorem \ref{indpw},  \begin{equation} \label{werr} \mathbb{E}[I_{1}]= \mathbb{E}\big\langle \big([Z_{s_{p}^{1}}]- \mathbb{E} [Z_{s_{p}^{1}}]\big) \wedge \mathbb{E}[Z_{\Sigma_{p}^{k-1}}], \phi  \big\rangle^{2}.\end{equation}Also we see that \begin{align*}\big\langle \big([Z_{s_{p}^{1}}]- \mathbb{E} [Z_{s_{p}^{1}}]\big) \wedge \mathbb{E}[Z_{\Sigma_{p}^{k-1}}], \phi  \big\rangle^{2} &=\Big\{\int_{\mathcal{S}_{p, 2} \times \ldots \times \mathcal{S}_{p, k}}{\big\langle\big([Z_{s_{p}^{1}}]- \mathbb{E}[Z_{s_{p}^{1}}]\big) \wedge [Z_{\Sigma_{p}^{k-1}}], \phi \big\rangle d\sigma_{p}^{k-1}(\Sigma_{p}^{k-1})} \Big\}^{2}  \\
& \leq \int_{\mathcal{S}_{p, 2} \times \ldots \times \mathcal{S}_{p, k}}{\Big\{\big\langle([Z_{s_{p}^{1}}]- \mathbb{E}[Z_{s_{p}^{1}}]) \wedge [Z_{\Sigma_{p}^{k-1}}], \phi \big\rangle \Big\}^{2} d\sigma^{k-1}_{p}(\Sigma_{p}^{k-1})}, \end{align*}where, in the second line, we have used Cauchy-Schwarz inequality. We then get \begin{equation}\label{varsev}
         \mathbb{E}[I_{1}]\leq \int_{\mathcal{S}_{p, 2} \times \ldots \times \mathcal{S}_{p, k}}\int_{\mathcal{S}_{p, 1}}\Big\{\big\langle([Z_{s_{p}^{1}}]- \mathbb{E}[Z_{s_{p}^{1}}]) \wedge [
         Z_{\Sigma_{p}^{k-1}}], \phi \big\rangle\Big\}^{2}\,  d\sigma_{p, 1}(s_{p}^{1})\, d\sigma^{k-1}_{p}(\Sigma_{p}^{k-1}). \end{equation} The righthand side of (\ref{varsev}) takes the following form by using the same notation $T= [Z_{\Sigma^{k-1}_{p}}]$, \begin{equation}\label{varsev2}
\int_{\mathcal{S}_{p, 2} \times \ldots \times \mathcal{S}_{p, k}}{\mathrm{Var}_{\sigma_{p, 1}} \big(\big\langle [Z_{s_{p}^{1}}] \wedge T, \phi \big\rangle \big) d\sigma_{p}^{k-1}(\Sigma_{p}^{k-1})}.\end{equation} Proceeding in the same way as in the estimation of $\mathbb{E}[I_k]$, we arrive at the following expression:

\begin{equation}\label{ei1}
\begin{aligned}
\mathbb{E}[I_1]
&=\int_{\mathcal{S}_{p, 2}\times\cdots\times\mathcal{S}_{p, k}}
\int_{\mathcal{S}_{p,1}} I_1(s_p^{1})\,d\sigma_{p, 1}(s_p^{1})\,d\sigma_p^{k-1}(\Sigma_p^{k-1}) \\
&\le (C_p)^{2/\alpha}(B_\phi)^{2}
\Bigg(\int_X \omega^{n-k+1}\wedge \bigwedge_{j=2}^{k}c_1(L_{p, j},h_{p, j})\Bigg)^{2}.
\end{aligned}
\end{equation}

Hence, by applying the same method for each $I_{j}(\Sigma_p^{j-1},s^{j}_{p})$,\, $j=2, \ldots, k-1$, using (\ref{sumvar}) and the basic inequality $A_{1}^2 + \ldots A^{2}_{k} \leq (A_{1} + \ldots + A_{k})^{2}$, we have (\ref{varra}). 
\end{proof}

 \begin{proof}[Proof of Theorem \ref{th1}]
 Let us take all holomorphic line bundles as identical and $\mathcal{S}_{p, j}= H^{0}(X, L_{p})$ in Theorem \ref{genvar}. We immediately have

 \begin{equation} \label{spevar}
  \mathrm{Var} \langle [Z_{\Sigma_{p}^{k}}], \phi \rangle \leq (C_{p})^{2/\alpha}\,(B_{\phi})^{2}(n \int_{X}{\omega^{n-k+1} \wedge c_{1}(L_{p}, h_{p})^{k-1}})^{2}.
 \end{equation}By the diophantine approximation \textbf{(A)}, there exists $p_{1} \in \mathbb{N}$ such that
 \begin{equation*}\label{cur1}\frac{A_{p}}{2}\,\, \omega \leq c_{1}(L_{p}, h_{p}) \leq 2A_{p}\,\, \omega \end{equation*} for all $p \geq p_{1}$, which implies that \begin{equation}\label{voll}\int_{X}{\omega^{n-k+1}\wedge c_{1}(L_{p}, h_{p})^{k-1}}\leq (2A_{p})^{k-1} \,\mathrm{Vol}(X) \leq 2^{n-1} A^{k-1}_{p} \mathrm{Vol}(X)\end{equation} for all $p \geq \max\{p_0, p_1\}$. Finally, by using the normalization and writing $D_{n}:= (n \,2^{n-1})^{2}$, Theorem 1.1 follows.

 \end{proof}

\section{Equidistribution of Zeros of Random Sections}

In this section, we provide the proof for Theorem \ref{th2}. We divide the proof of it into three separate theorems. We begin with the asymptotic behavior of the expected zero distribution. Building on the variance estimates obtained earlier and the expected distribution, we prove that the zero sets possess the self-averaging property provided a summability condition is met.

Since the estimate (\ref{uniboex}) is easily seen to hold for continuous real-valued forms, following \cite{SZ99} (see also \cite{MM1}), we restrict our attention to a countable $\mathscr{C}^{0}$-dense family of smooth forms and work with a test form from this family.

\subsection{Expected Distribution of Zeros}
We will prove expected distribution for the case of codimension one, followed by a generalization of the proof to handle higher codimensions. While one could rephrase the results of this section using subspace notation as in the previous section, in codimension $k$, the subspaces necessarily must be taken to be the same. Since it just a matter of slight notation change, we keep the main formulation from the Introduction and prefer to work with $H^{0}(X, L_{p})$.

\begin{thm}\label{exdist1}
	 Let $(L_{p}, h_{p})_{p\geq 1}$,\, $(X, \omega)$ and $\sigma_{p}$  be as defined above. Assume that they  satisfy the conditions \textbf{(A)}, \textbf{(B)} and \textbf{(C2)}. If $\ \lim_{p\rightarrow \infty}\frac{C_{p}^{1/\alpha}}{A_{p}}=0,$ then
	\begin{equation}
		\frac{1}{A_{p}}\,\mathbb{E}[Z_{s_{p}}]\longrightarrow \omega
	\end{equation}
	in the weak* topology of currents as $p\rightarrow \infty.$
\end{thm}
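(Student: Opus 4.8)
The plan is to start from the explicit decomposition of the expected normalized current recorded in (\ref{excur}),
\[
\mathbb{E}[\widehat{Z}_{s_{p}}]=\frac{1}{A_{p}}c_{1}(L_{p},h_{p})+\frac{1}{2A_{p}}dd^{c}\log K_{\mathcal{S}_{p}}+\frac{1}{A_{p}}dd^{c}F_{p},\qquad F_{p}(x):=\int_{\mathcal{S}_{p}}\log\big|\langle a,u_{p}(x)\rangle\big|_{h_{p}}\,d\sigma_{p}(s_{p}),
\]
and to show that the first term converges to $\omega$ while the other two vanish in the weak* topology. Since $\tfrac{1}{A_{p}}\mathbb{E}[Z_{s_{p}}]=\mathbb{E}[\widehat{Z}_{s_{p}}]$, this is exactly the claim. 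As explained at the start of the section, it suffices to test against a fixed $\phi\in\mathcal{D}^{n-1,n-1}(X)$ drawn from a countable $\mathscr{C}^{0}$-dense family.

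For the first term, condition \textbf{(A)} gives $\big\|\tfrac{1}{A_{p}}c_{1}(L_{p},h_{p})-\omega\big\|_{\mathscr{C}^{0}}=O(A_{p}^{-a})\to 0$, so $\tfrac{1}{A_{p}}c_{1}(L_{p},h_{p})\to\omega$ in $\mathscr{C}^{0}$ and a fortiori weak*. For the second term I would integrate by parts — legitimate because $X$ is compact without boundary and $\log K_{\mathcal{S}_{p}}\in L^{1}(X,\omega^{n})$ by (\ref{fubstu}) — to get $\big\langle\tfrac{1}{2A_{p}}dd^{c}\log K_{\mathcal{S}_{p}},\phi\big\rangle=\tfrac{1}{2A_{p}}\int_{X}\log K_{\mathcal{S}_{p}}\,dd^{c}\phi$. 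Writing $\log K_{\mathcal{S}_{p}}=n\log A_{p}+(\log K_{\mathcal{S}_{p}}-n\log A_{p})$, the constant piece integrates to $0$ since $\int_{X}dd^{c}\phi=0$ on a closed manifold, while \textbf{(B)} bounds the remainder by $\log M_{0}$ pointwise; combined with the total-variation estimate (\ref{hercomp}) this yields $\big|\big\langle\tfrac{1}{2A_{p}}dd^{c}\log K_{\mathcal{S}_{p}},\phi\big\rangle\big|\le\tfrac{\log M_{0}}{2A_{p}}\,b\|\phi\|_{\mathscr{C}^{2}}\,\mathrm{Vol}(X)\to 0$.

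The third term is where the hypothesis $C_{p}^{1/\alpha}/A_{p}\to 0$ is used and is the crux of the argument. I would first establish a uniform bound on $F_{p}$: since $u_{p}(x)$ (equivalently $\rho_{p}(x)$) is a unit vector and $\sigma_{p}$ is a probability measure, Jensen's inequality together with the moment condition \textbf{(C2)} gives
\[
|F_{p}(x)|\le\int_{\mathbb{C}^{d_{p}}}\big|\log|\langle a,\rho_{p}(x)\rangle|\big|\,d\sigma_{p}(a)\le\Big(\int_{\mathbb{C}^{d_{p}}}\big|\log|\langle a,\rho_{p}(x)\rangle|\big|^{\alpha}d\sigma_{p}(a)\Big)^{1/\alpha}\le C_{p}^{1/\alpha}
\]
for every $x\in X$. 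After integration by parts — valid because $F_{p}$ is bounded and coincides almost everywhere with a plurisubharmonic potential, as noted after (\ref{excur}) — the total-variation bound (\ref{hercomp}) gives $\big|\big\langle\tfrac{1}{A_{p}}dd^{c}F_{p},\phi\big\rangle\big|\le\tfrac{C_{p}^{1/\alpha}}{A_{p}}\,b\|\phi\|_{\mathscr{C}^{2}}\,\mathrm{Vol}(X)$, which tends to $0$ by assumption. Summing the three estimates yields $\langle\mathbb{E}[\widehat{Z}_{s_{p}}],\phi\rangle\to\langle\omega,\phi\rangle$, the desired convergence.

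The genuinely delicate points are not computational but structural: justifying the two integrations by parts by checking that the relevant potentials ($\log K_{\mathcal{S}_{p}}$ and $F_{p}$) are sufficiently regular for the pairing $\langle dd^{c}u,\phi\rangle=\int u\,dd^{c}\phi$ to hold, and extracting the uniform bound $|F_{p}|\le C_{p}^{1/\alpha}$ cleanly from \textbf{(C2)}. Once these are in place the convergence is immediate, and the interplay between the $O(A_{p}^{-1})$ decay of the Bergman term and the $O(C_{p}^{1/\alpha}/A_{p})$ decay of the fluctuation term explains precisely why the stated hypothesis on $C_{p}^{1/\alpha}/A_{p}$ is the right one.
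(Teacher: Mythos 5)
Your proposal is correct and follows essentially the same route as the paper: the Poincar\'e--Lelong decomposition into the curvature term, the Bergman-kernel term, and the fluctuation term, with \textbf{(A)} handling the first, \textbf{(B)} the second, and H\"older plus \textbf{(C2)} giving the $C_{p}^{1/\alpha}/A_{p}$ bound on the third. The only (harmless) difference is cosmetic: you subtract the constant $n\log A_{p}$ and use $\int_{X}dd^{c}\phi=0$ to bound the Bergman term by $O(A_{p}^{-1})$, whereas the paper settles for the $O(A_{p}^{-1}\log A_{p})$ bound coming directly from $\tfrac{1}{A_{p}}\int_{X}|\log K_{p}|\,\omega^{n}\to 0$.
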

\begin{proof}
	Let $\{S^{p}_{j}\}_{j = 1}^{d_{p}}$ be an orthonormal basis of $H^{0}(X, L_{p})$ and $s_{p}\in H^{0}(X, L_{p})$. Then $$s_{p}=\sum_{j=1}^{d_{p}}{a_{j}^{p}S^{p}_{j}}=\langle a, \Gamma_{p} \rangle,$$ where $\Gamma_{p}=(S_{1}^{p}, \ldots, S^{p}_{d_{p}})$, and $a=(a_{1}, \ldots, a_{d_{p}})\in \mathbb{C}^{d_{p}}$. Let $x\in X$,\, $U\subseteq X$ \,be an open neighborhood of $x$ and $e_{p}$ be a holomorphic frame of $L_{p}$ in $U$. Then locally $S^{p}_{j}=f_{j}e_{p}$, where $f_{j}$ are holomorphic functions in $U$\, and so, by writing $f=(f_{1}, \ldots, f_{d_{p}}),$ we have $$s_{p}=\sum_{j=1}^{d_{p}}{a_{j}^{p}f_{j}e_{p}}=\langle a, f \rangle e_{p}.$$ By Poincar\'{e}-Lelong formula (\ref{lelpoi}), on the neighborhood $U$, we have
	\begin{equation}\label{pl}
		[Z_{s_{p}}]=dd^{c}\log{|\langle a, f \rangle|}= dd^{c}\log|\langle a, \Gamma_{p}\rangle|_{h_{p}}+c_{1}(L_{p}, h_{p}).
	\end{equation}
	 Let us now fix $\phi \in \mathcal{D}^{n-1, n-1}(X),$  without loss of generality we may assume that $\operatorname{supp}(\phi)\subset U$ as the general case follows by covering $\operatorname{supp}(\phi)$ by $U_{\alpha}$'s and using the compatibility conditions. Using the  definition of expectation and (\ref{pl}), we have
	\begin{equation} \label{Ip}
		\frac{1}{A_{p}}\langle \mathbb{E}[Z_{s_{p}}], \phi \rangle=	\frac{1}{A_{p}}\langle c_{1}(L_{p}, h_{p}), \phi \rangle + 	\frac{1}{A_{p}}  \int_{H^{0}(X, L_{p})}\int_{X}{\log{| \langle a, \Gamma_{p}(x)\rangle|_{h_{p}}}}dd^{c}\phi(x)d\sigma_{p}(s_{p})
	\end{equation}
	Let us denote the second term above by $I_{p}$. Then, by exploiting the fact that $\Gamma_{p}(x)=|\Gamma_{p}(x)|_{h_{p}}u_{p}(x)$ (so that $|u_{p}|_{h_{p}}=1$), we get
	\begin{align}\label{cd1}
|I_p|
&\le \frac{1}{A_p}\int_{H^{0}(X,L_p)}\int_X
\big|\log |\Gamma_p(x)|_{h_p}\big|\,|dd^{c}\phi(x)|\,d\sigma_p(s_p) \notag\\
&\quad + \frac{1}{A_p}\int_{H^{0}(X,L_p)}\int_X
\big|\log |\langle a,u_p(x)\rangle|_{h_p}\big|\,|dd^{c}\phi(x)|\,d\sigma_p(s_p).
\end{align} Utilizing (\ref{hercomp}) and the simple observation that $\frac{1}{A_{p}}\log K_{p}(x) \rightarrow 0$ as $p \rightarrow \infty$ from (\ref{0.11}),
	\begin{equation}\label{1.term}
		\frac{1}{A_{p}}\int_{H^{0}(X, L_{p})}\int_{X}|\log{|\Gamma_{p}(x)|_{h_{p}}}|\,|dd^{c}\phi(x)|\,d\sigma_{p}(s) \leq 	\frac{B_{\phi}}{2A_{p}}\int_{X}|\log K_{p}(x)| \omega^{n}(x)\rightarrow 0
	\end{equation} as $p \rightarrow \infty.$ Also, using the identification  $H^{0}(X, L_{p})\simeq\mathbb{C}^{d_{p}}$,  the moment condition \textbf{(C2)} along with  H\"{o}lder's inequality and (\ref{hercomp}),  we get \begin{equation} \int_{X}\int_{\mathbb{C}^{d_{p}}}{\big{|}\log{| \langle a, \rho_{p}(x)\rangle|}\big{|}}d\sigma_{p}(a)dd^{c}\phi(x)\leq (C_{p})^{\frac{1}{\alpha}}B_{\phi}\,\mathrm{Vol}(X),  \end{equation} where $$\rho_{p}(x)=\Big(\frac{f_{1}(x)}{\sqrt{\sum_{j=1}^{d_{p}}{|f_{j}(x)|^{2}}}}, \ldots, \frac{f_{d_{p}}(x)}{\sqrt{\sum_{j=1}^{d_{p}}{|f_{j}(x)|^{2}}}}\Big).$$It follows from Fubini-Tonelli's theorem that \begin{equation*}
	\int_{H^{0}(X, L_{p})}\int_{X}{\big |\log{| \langle a, u_{p}(x)\rangle|_{h_{p}}}\big|}\big |dd^{c}\phi(x)\big|d\sigma_{p}(s_{p})=\int_{X}\int_{\mathbb{C}^{d_{p}}}{\big|\log{| \langle a, \rho_{p}(x)\rangle|}\big|}d\sigma_{p}(a)\big|dd^{c}\phi(x)\big|.
	\end{equation*}Consequently, employing the given hypothesis, we deduce that
	\begin{equation}\label{2.term}
		\frac{1}{A_{p}}  \int_{H^{0}(X, L_{p})}\int_{X}{\big|\log{| \langle a, u_{p}(x)\rangle|_{h_{p}}}\big|}\big|dd^{c}\phi(x)\big|d\sigma_{p}(s_{p})\leq \frac{C_{p}^{1/{\alpha}}}{A_{p}}\,B_{\phi}\,Vol(X) \rightarrow 0,  \end{equation}
		as $p\rightarrow \infty$. In turn, (\ref{1.term}) and (\ref{2.term}) imply that $I_{p}\rightarrow 0,$ as $p \rightarrow \infty.$ Finally, by using (\ref{cur}) we have that $	\frac{1}{A_{p}}\langle c_{1}(L_{p}, h_{p}), \phi \rangle \rightarrow \langle \omega, \phi \rangle ,$ thus concluding the proof.
\end{proof}

\begin{thm}\label{exdist}
	
	Under the assumptions \textbf{(A)}, \textbf{(B)}, \textbf{(C1)}-\textbf{(C2)}, and   \begin{equation}\label{poscon} \lim_{p\rightarrow \infty}{\frac{C_{p}^{1/\alpha}}{A_{p}}}=0, \end{equation} we have
	\begin{equation*}
		\mathbb{E}\big[\widehat{Z}_{\Sigma_{p}^{k}}\big]\longrightarrow \omega^k
	\end{equation*}
	in the weak* topology of currents as $p\rightarrow \infty.$
\end{thm}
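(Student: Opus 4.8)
The plan is to reduce the statement to the codimension-one convergence of Theorem \ref{exdist1} via the multiplicative structure of the expected current and then to propagate the convergence through the wedge product using Bedford--Taylor intersection theory. Since the bundles and measures defining $\Sigma_p^k$ coincide in all $k$ slots, Theorem \ref{indpw} gives $\mathbb{E}[\widehat{Z}_{\Sigma_p^k}] = T_p^{\wedge k}$, where $T_p := \tfrac{1}{A_p}\mathbb{E}[Z_{s_p}]$ is the positive closed $(1,1)$-current of (\ref{excur}); as observed just after (\ref{excur}), $T_p$ lies in the Bedford--Taylor class, so its self-intersections are well defined. Writing $\beta_p := \tfrac{1}{A_p}c_1(L_p,h_p)$, formula (\ref{excur}) gives the global decomposition $T_p = \beta_p + dd^c\psi_p$ with $\psi_p = \tfrac{1}{2A_p}\log K_p + \tfrac{1}{A_p}F_p$ and $F_p(x) = \int \log|\langle a, u_p(x)\rangle|_{h_p}\,d\sigma_p$, the latter being globally defined since the base locus is empty for large $p$ by \textbf{(B)}.

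The heart of the argument is to show that $T_p - \omega \to 0$ in a controlled way. Setting $\eta_p := \beta_p - \omega$, condition \textbf{(A)} gives $\|\eta_p\|_{\mathscr{C}^0} = O(A_p^{-a}) \to 0$, while I claim $\|\psi_p\|_{L^\infty} \to 0$. Indeed, \textbf{(B)} forces $\log K_p = n\log A_p + O(1)$ uniformly, hence $\tfrac{1}{2A_p}\|\log K_p\|_{L^\infty} \le \tfrac{n\log A_p + \log M_0}{2A_p} \to 0$; and for each fixed $x$ the vector $u_p(x)$ is a unit vector, so \textbf{(C2)} together with H\"{o}lder's inequality gives $|F_p(x)| \le C_p^{1/\alpha}$, whence $\tfrac{1}{A_p}\|F_p\|_{L^\infty} \le C_p^{1/\alpha}/A_p \to 0$ by the hypothesis (\ref{poscon}). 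Thus $T_p = \omega + \eta_p + dd^c\psi_p$ with $\eta_p \to 0$ and $\psi_p \to 0$, both uniformly.

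Finally I would propagate this through the product by the telescoping identity
\[
T_p^{\wedge k} - \omega^{\wedge k} = \sum_{j=0}^{k-1} T_p^{\wedge j}\wedge(T_p-\omega)\wedge\omega^{\wedge(k-1-j)},
\]
valid for Bedford--Taylor products of closed positive $(1,1)$-currents with bounded potentials, and substitute $T_p - \omega = \eta_p + dd^c\psi_p$. For a test form $\phi\in\mathcal{D}^{n-k,n-k}(X)$ the $\eta_p$-terms are bounded by $\|\eta_p\wedge\phi\|_{L^\infty}$ times the mass of the positive current $T_p^{\wedge j}\wedge\omega^{k-1-j}$, while the $dd^c\psi_p$-terms are handled by integrating by parts to move $dd^c$ onto $\phi$ (legitimate since $T_p^{\wedge j}$ and $\omega$ are closed and $\psi_p$ is bounded), giving a bound $\|\psi_p\|_{L^\infty}\, b\|\phi\|_{\mathscr{C}^2}\int_X T_p^{\wedge j}\wedge\omega^{n-j}$ via the total-variation estimate (\ref{hercomp}). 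In both cases the relevant masses $\int_X T_p^{\wedge j}\wedge\omega^{n-j} = \{T_p\}^{j}\smile\{\omega\}^{n-j}$ are cohomological and stay bounded because $\{T_p\} = \tfrac{1}{A_p}c_1(L_p) \to \{\omega\}$, so every summand tends to $0$ and $T_p^{\wedge k}\to\omega^k$ weakly. The main obstacle is the careful pluripotential-theoretic bookkeeping: verifying that the self-intersections $T_p^{\wedge j}$ are well defined, that the telescoping identity and the integration by parts hold at the level of currents with merely bounded (not smooth) potentials, and that the decay $\|\psi_p\|_{L^\infty}\to 0$ --- the genuinely new, setting-specific input --- is uniform in $x$.
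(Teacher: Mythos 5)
Your proposal is correct and follows essentially the same route as the paper: Theorem \ref{indpw} to write $\mathbb{E}[\widehat{Z}_{\Sigma_p^k}]$ as the $k$-th power of the normalized expected $(1,1)$-current, the same telescoping factorization (the paper's $\mathcal{Y}_p=(\Theta_p/A_p-\omega)\wedge\upsilon_p$ with $\upsilon_p=\sum_j \Theta_p^j A_p^{-j}\wedge\omega^{k-1-j}$), the same three-way splitting of $T_p-\omega$ into the Chern-form error $\beta_p$ plus $dd^c$ of $\tfrac{1}{2A_p}\log K_p$ and of the expected logarithmic potential, and the same integration by parts onto $dd^c\phi$ controlled by the bounded cohomological masses, yielding the bound $O\!\left(\tfrac{\log A_p}{A_p}+\tfrac{C_p^{1/\alpha}}{A_p}+A_p^{-a}\right)$. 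The only cosmetic difference is that you extract the uniform bounds $\|\log K_p - n\log A_p\|_{L^\infty}=O(1)$ and $|F_p(x)|\le C_p^{1/\alpha}$ before integrating, whereas the paper keeps these estimates inside the integrals.
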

\begin{proof}
	
	We follow the lines of the proof of Proposition 3.5 in \cite{CLMM}. There exists $c > 0$ such that for every $\phi \in \mathcal{D}^{n-k, n-k}(X)$, $k \in \{1, \dots, n\}$, and every real $(1,1)$-form $\theta$ on $X$, the following inequality holds
	\begin{equation}\label{form2}
		-c\|\phi\|_{\mathscr{C}^0} \|\theta\|_{\mathscr{C}^0} \omega^{n-k+1} \leq \phi \wedge \theta \leq c\|\phi\|_{\mathscr{C}^0} \|\theta\|_{\mathscr{C}^0} \omega^{n-k+1}.
	\end{equation}
	
	For $p > \max\{p_0, p_1\}$, define $\Theta_p := \mathbb{E}[Z_{s^1_p}]$. Using (\ref{excur}) and Theorem \ref{indpw}, we write
	$$
	\mathbb{E}[\widehat{Z}_{\Sigma^k_p}] = [\mathbb{E}[\widehat{Z}_{s^1_p}]]^k = \frac{\Theta_p^k}{A_p^k}.$$
	Now, define
	\begin{equation}\label{algmanii}
		\mathcal{Y}_p := \frac{\Theta_p^k}{A_p^k} - \omega^k, \quad \upsilon_p := \sum_{j=0}^{k-1} \frac{\Theta_p^j}{A_p^j} \wedge \omega^{k-1-j}, \quad \beta_p := \frac{c_1(L_p, h_p)}{A_p} - \omega.
	\end{equation}
	
	Note that $\upsilon_{p}$ is a positive $(k-1, k-1)$-current from Theorem \ref{indpw}. By the diophantine approximation (\ref{cur}) and (\ref{excur}), we obtain:
	\begin{equation}\label{algmani}
		\|\beta_p\|_{\mathscr{C}^0} \leq \frac{C_0}{A_p^a}, \quad \frac{\Theta_p}{A_p} - \omega = \beta_p + \frac{1}{2A_p} dd^c \log K_p + \frac{1}{A_p} dd^c \Bigg( \int_{s_p \in H^0(X, L_p)} \log\big|\langle a, u_p(x)\rangle\big|_{h_p} d\sigma_p(s_p) \Bigg).
	\end{equation}
	
	For $\phi \in \mathcal{D}^{n-k, n-k}(X)$, substituting (\ref{algmanii}) and (\ref{algmani}) yields
	\begin{equation}\label{mainesti}
		\langle \mathcal{Y}_p, \phi \rangle = \langle \Big(\frac{\Theta_p}{A_p} - \omega\Big) \wedge \upsilon_p, \phi \rangle = \int_X \upsilon_p \wedge \beta_p \wedge \phi + \int_X \frac{\log K_p}{2A_p} \upsilon_p \wedge dd^c \phi + \int_X \Bigg( \int_{H^0(X, L_p)} \frac{\log|\langle a, u_p\rangle|_{h_p}}{A_p} d\sigma_p \Bigg) \upsilon_p \wedge dd^c \phi.
	\end{equation}
	
	Continuing as in the proof of Proposition 3.5 of \cite{CLMM} and applying (\ref{hercomp}), (\ref{form2}), (\ref{antc}) and (\ref{m1.3}) where appropriate, we find
	\begin{equation*}
		\big| \langle \mathbb{E}[\widehat{Z}_{\Sigma^k_p}] - \omega^k, \phi \rangle \big| = \big| \langle \mathcal{Y}_p, \phi \rangle \big| \leq C'_n \|\phi\|_{\mathscr{C}^2} \mathrm{Vol}(X) \Big( \frac{\log A_p}{A_p} + \frac{C_p^{1/\alpha}}{A_p} + A_p^{-a} \Big),
	\end{equation*}
	for all $p \geq \max\{p_0, p_1, p'\}$, where $C'_n > 0$ is a constant depending only on the dimension $n$, completing the proof.
\end{proof}

\begin{thm}Let $(X,\omega)$  be a compact K\"{a}hler manifold of $\textup{dim}_{\mathbb{C}}X=n$ and let  $(L_{p},h_{p})_{p\geq 1},$ be a sequence of Hermitian holomorphic line bundles on $X$ with $\mathscr{C}^{2}$ metrics $h_{p}$. Assume that the conditions \textbf{(A)}, \textbf{(B)} and \textbf{(C1)-(C2)} hold.  If $$\ \sum_{p=1}^{\infty}\frac{C_{p}^{2/\alpha}}{A_{p}^{2}}< \infty,$$ then for $\sigma_{\infty}^{k}$-almost every sequence $\mathbf{\Sigma_{k}}=\left \{ \Sigma_{p}^{k} \right \} _{p\geq 1} \in \mathcal{H}_{\infty}^{k},$
	\begin{equation*}
		\big[\widehat{Z}_{\Sigma_{p}^{k}}\big]\longrightarrow \omega^k
	\end{equation*}
	in the weak* topology of currents as $p \rightarrow \infty.$
\end{thm}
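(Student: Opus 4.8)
The plan is to derive the almost-sure statement from the two quantitative results already established: the variance bound of Theorem~\ref{th1} and the convergence of the expected current in Theorem~\ref{exdist}. First I would observe that the summability hypothesis $\sum_{p}C_{p}^{2/\alpha}/A_{p}^{2}<\infty$ forces $C_{p}^{2/\alpha}/A_{p}^{2}\to 0$, hence $C_{p}^{1/\alpha}/A_{p}\to 0$, which is exactly condition~(\ref{poscon}). Therefore Theorem~\ref{exdist} applies and gives $\mathbb{E}\langle[\widehat{Z}_{\Sigma_{p}^{k}}],\phi\rangle\to\langle\omega^{k},\phi\rangle$ for every $\phi\in\mathcal{D}^{n-k,n-k}(X)$.

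Following \cite{SZ99}, I would reduce the weak* convergence to a countable family: since the normalized masses of $[\widehat{Z}_{\Sigma_{p}^{k}}]$ are uniformly bounded --- by (\ref{uniboex}) together with \textbf{(A)}, which as in (\ref{voll}) gives $A_{p}^{-k}\int_{X}c_{1}(L_{p},h_{p})^{k}\wedge\omega^{n-k}\le 2^{k}\mathrm{Vol}(X)$ for large $p$ --- it suffices to prove convergence against each member of a fixed countable $\mathscr{C}^{0}$-dense family $\{\phi_{m}\}\subset\mathcal{D}^{n-k,n-k}(X)$.

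Fix $\phi=\phi_{m}$ and write $W_{p}:=\langle[\widehat{Z}_{\Sigma_{p}^{k}}],\phi\rangle$. By Tonelli's theorem and Theorem~\ref{th1},
\[
\mathbb{E}_{\sigma_{\infty}^{k}}\!\Big[\sum_{p}(W_{p}-\mathbb{E}W_{p})^{2}\Big]=\sum_{p}\mathrm{Var}\langle[\widehat{Z}_{\Sigma_{p}^{k}}],\phi\rangle\le D_{n}(B_{\phi})^{2}(\mathrm{Vol}(X))^{2}\sum_{p}\frac{C_{p}^{2/\alpha}}{A_{p}^{2}}<\infty,
\]
where the first equality uses that the $p$-th marginal of the product measure $\sigma_{\infty}^{k}$ is $\sigma_{p}^{k}$. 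Consequently $\sum_{p}(W_{p}-\mathbb{E}W_{p})^{2}<\infty$ for $\sigma_{\infty}^{k}$-almost every sequence, so $W_{p}-\mathbb{E}W_{p}\to 0$ almost surely; combined with the first paragraph this gives $W_{p}\to\langle\omega^{k},\phi\rangle$ for $\sigma_{\infty}^{k}$-almost every sequence. Intersecting the resulting full-measure events $\Omega_{m}$ over $m$ yields a single full-measure set $\Omega=\bigcap_{m}\Omega_{m}$ on which convergence holds simultaneously for all $\phi_{m}$.

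The main obstacle is the final upgrade from the countable family to all test forms. On $\Omega$ I would estimate, for arbitrary $\phi$ and any $\phi_{m}$, $|\langle[\widehat{Z}_{\Sigma_{p}^{k}}]-\omega^{k},\phi\rangle|\le|\langle[\widehat{Z}_{\Sigma_{p}^{k}}]-\omega^{k},\phi_{m}\rangle|+(\text{mass bound})\,\|\phi-\phi_{m}\|_{\mathscr{C}^{0}}$, where the uniform mass bound from the previous step controls the second term uniformly in $p$. Letting $p\to\infty$ and then choosing $\phi_{m}$ close to $\phi$ in $\mathscr{C}^{0}$ closes the argument. The delicate point is precisely that this mass bound must hold almost surely and uniformly in $p$ after the $A_{p}^{-k}$ normalization; this is guaranteed by (\ref{uniboex}) and \textbf{(A)} as recorded above.
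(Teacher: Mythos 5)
Your proposal is correct and follows essentially the same route as the paper: summability of the variance bound from Theorem~\ref{th1} plus Tonelli/Beppo--Levi gives $\sum_{p}\langle[\widehat{Z}_{\Sigma_{p}^{k}}]-\mathbb{E}[\widehat{Z}_{\Sigma_{p}^{k}}],\phi\rangle^{2}<\infty$ almost surely, hence almost-sure convergence of the fluctuations to zero, which is then combined with Theorem~\ref{exdist} (whose hypothesis~(\ref{poscon}) indeed follows from the summability assumption). The reduction to a countable $\mathscr{C}^{0}$-dense family via the uniform mass bound from (\ref{uniboex}) and \textbf{(A)}, which you spell out, is exactly the step the paper handles in the remark at the beginning of Section~4.
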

\begin{proof}
	Fix $\phi \in \mathcal{D}^{n-k, n-k}(X),$ and pick $\mathbf{\Sigma_{k}}=\left \{ \Sigma_{p}^{k} \right \} _{p\geq 1} \in \mathcal{H}_{\infty}^{k}$. We argue by using the method in the proof of \cite[Corollary 1.3]{Shif}. Let us examine the non-negative random variables
	\begin{equation}\label{X_p}
		\mathcal{X}_{p}(\mathbf{\Sigma_{k}}):=\big \langle \big[\widehat{Z}_{\Sigma_{p}^{k}}\big] - \mathbb{E}\big[\widehat{Z}_{\Sigma_{p}^{k}}\big] , \phi  \big\rangle ^{2}\geq 0
	\end{equation}
	By appealing to the equivalent characterization of variance, notice that
	\begin{equation}\label{bep}
		\int_{\mathcal{H}_{\infty}^{k}}\mathcal{X}_{p}(\mathbf{\Sigma_{k}})d\sigma_{\infty}^{k}(\mathbf{\Sigma_{k}})=
		 \mathrm{Var}\big\langle \big[\widehat{Z}_{\Sigma_{p}^{k}}\big], \, \phi \big\rangle
	\end{equation}
	Using Theorem \ref{th1} along with the summability condition given by the hypothesis, we get
   \begin{equation}
		\sum_{p=1}^{\infty} \int_{\mathcal{H}_{\infty}^{k}}\mathcal{X}_{p}(\mathbf{\Sigma_{k}})d\sigma_{\infty}^{k}(\mathbf{\Sigma_{k}})= \sum_{p=1}^{\infty}  \mathrm{Var} \big\langle \big[\widehat{Z}_{\Sigma_{p}^{k}}\big], \, \phi \big\rangle< \infty
	\end{equation}
	By (\ref{bep}) above and invoking Beppo-Levi Theorem from the standard measure theory, we get
	\begin{equation}
		\int_{\mathcal{H}_{\infty}^{k}}\sum_{p=1}^{\infty} \mathcal{X}_{p}(\mathbf{\Sigma_{k}})d\sigma_{\infty}^{k}(\mathbf{\Sigma_{k}}) =  \sum_{p=1}^{\infty}  \mathrm{Var} \big\langle \big[\widehat{Z}_{\Sigma_{p}^{k}}\big], \, \phi \big\rangle < \infty
	\end{equation}
	This implies that, for $\sigma_{\infty}^{k}$- almost every sequence $\mathbf{\Sigma_{k}} \in \mathcal{H}_{\infty}^{k}$ of systems, the series $\sum_{p=1}^{\infty}\mathcal{X}_{p}(\mathbf{\Sigma_{k}})$ converges, leading to the conclusion that $\mathcal{X}_{p}\rightarrow 0$\, $\sigma_{\infty}^{k}$-almost surely. By definition (\ref{X_p}) of random variables $\mathcal{X}_{p}$ this also indicates that
	\begin{equation}
	\big \langle \big[\widehat{Z}_{\Sigma_{p}^{k}}\big] - \mathbb{E}\big[\widehat{Z}_{\Sigma_{p}^{k}}\big] , \phi  \big\rangle \rightarrow 0
		\end{equation}
$\sigma_{\infty}^{k}$-almost surely. Combining  this last information with Theorem \ref{exdist}, we conclude that for $\sigma_{\infty}^{k}$-almost every sequence,
\begin{equation}
	\big[\widehat{Z}_{\Sigma_{p}^{k}}\big]\longrightarrow \omega^k
\end{equation}
in the weak* topology of currents as $p\rightarrow \infty$.
\end{proof}

\section{Some Special Cases}\label{sec5}

 In \cite{BCM}, certain types of measures which satisfy the assumption \textbf{(C1)} and \textbf{(C2)} have been investigated as special cases. We will now provide some insights concerned with most of these measures in connection with Theorems \ref{th1} and \ref{th2}. The first two measures to be considered here will be the Gaussian and the Fubini-Study measures, both of which are unitary invariant measures that come with certain advantages in estimations.
 \subsection{Gaussian and Fubini-Study}In what follows, $\lambda_{n}$ represents the Lebesgue measure on $\mathbb{C}^{n}$ (identified with $\mathbb{R}^{2n}$). We will present the variance estimate simultaneously for both Gaussian and Fubini-Study cases, with detailed explanations provided for the Gaussian case as the computations are exactly the same. It turns out that, in these cases, the constants $C_{p}$ reduce to the ones independent of $p$ and Theorem \ref{th1} remains valid for every $\alpha \geq 1$. The standard Gaussian measure is precisely defined as follows, for $a=(a_{1}, \ldots, a_{n})\in \mathbb{C}^{n},$  \begin{equation} \label{gaussme}d\sigma_{n}(a)=\frac{1}{\pi^{n}}e^{-||a||^{2}}d\lambda_{n}(a),\end{equation}and the Fubini-Study measure on $\mathbb{C}\mathbb{P}^{n}\supset \mathbb{C}^{n}$  is defined as: \begin{equation}\label{fubstudy} d\sigma_{n}(a)=\frac{n!}{\pi^{n}}\frac{1}{(1+ ||a||^{2})^{n+1}}d\lambda_{n}(a).\end{equation}

  As for these two measures, we record two facts (Lemma 4.8, Lemma 4.10) from \cite{BCM}: Given that  $\sigma_{n}$ is the Gaussian measure, for every integer $n\geq 1$ and every $\alpha \geq 1$, we have \begin{equation}\label{gauss1}\int_{\mathbb{C}^{n}}{|\log{|\langle a, v \rangle|}|}^{\alpha}d\sigma_{n}(a)= 2 \int_{0}^{\infty}{r|\log{r}|^{\alpha} e^{-r^{2}} dr}, \,\,\,\,\forall v\in \mathbb{C}^{n},\,\,\,||v||=1;\end{equation} if $\sigma_{n}$ is the Fubini-Study, then for every integer $n \geq 1$ and every $\alpha \geq 1$ \begin{equation}\label{gauss2}\int_{\mathbb{C}^{n}}{|\log{|\langle a, v \rangle|}|}^{\alpha}d\sigma_{n}(a)= 2\int_{0}^{\infty}{\frac{r|\log{r}|^{\alpha}}{(1+r^{2})^{2}}dr}, \,\,\,\,\forall v\in \mathbb{C}^{n},\,\,\,||v||=1.\end{equation}As was remarked following Theorem \ref{th2} in the introduction, they are independent of the dimension $n$. For these two special measures, Theorem \ref{th1} becomes the following.

     \begin{thm}\label{Vard} Under the condition \textbf{(A)}, assume that for every $j=1,\ldots,k$,
the linear system $H^{0}(X, L_p)$ is base-point free, i.e. $\operatorname{Bs}(H^{0}(X, L_p))=\varnothing$ for large values of $p$. Let $\sigma^{k}_{d_{p}}$ be the product Gaussian (Fubini-Study) measure on $H^{0}(X, L_{p})^{k}$ given by (\ref{gaussme}) (respectively,  (\ref{fubstudy})). Then for any  $\phi\in \mathcal{D}^{n-k, n-k}(X)$, one gets \begin{equation}\label{mainvar-gf}  \mathrm{Var}\big\langle \big[\widehat{Z}_{\Sigma_{p}^{k}}\big],\, \phi \big\rangle \leq \frac{1}{A^{2}_{p}}\Lambda_{k}\,B^{2}_{\phi} \,\mathrm{Vol}(X)^{2}, \end{equation}where  $\Lambda_{k}=2^{k-1}\int_{0}^{\infty}{r|\log{r}|^{\alpha} e^{-r^{2}} dr}$ (respectively, $\Lambda_{k}=2^{k-1}\int_{0}^{\infty}{\frac{r|\log{r}|^{\alpha}}{(1+r^{2})^{2}}dr}$).  \end{thm}

We infer from Theorem \ref{th2} the subsequent theorem

 \begin{thm} \label{GauFub}
With the same assumptions of Theorem \ref{Vard} and the assumption \textbf{(B)}, let $\sigma_{p}$ be the Gaussian (Fubini-Study) measure on   $H^{0}(X, L_{p})\simeq \mathbb{C}^{d_{p}}$ given by (\ref{gaussme}) (respectively, \ref{fubstudy}). Then, for $1\leq k \leq \textup{dim}_{\mathbb{C}} X$
	\begin{equation}
		\mathbb{E}\big[\widehat{Z}_{\Sigma_{p}^{k}}\big]\longrightarrow \omega^k
	\end{equation}
	in the weak* topology of currents as $p\rightarrow \infty.$ In addition, if $\sum_{p=1}^{\infty}\frac{1}{A^{2}_{p}}< \infty,$  then for $\sigma^{k}_{\infty}-$almost every sequence $\{\Sigma_{p}^{k}\}\in \mathcal{H}^{k}_{\infty}$ we have
	\begin{equation}
		\big[\widehat{Z}_{\Sigma_{p}^{k}}\big]\longrightarrow \omega^k
	\end{equation}
	in the weak* topology of currents as $p \rightarrow \infty.$
 \end{thm}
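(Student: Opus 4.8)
The plan is to treat this statement as a direct specialization of Theorem \ref{th2}, so that the only real work is to confirm that the Gaussian and Fubini--Study measures fit the standing hypotheses \textbf{(C1)}--\textbf{(C2)} with moment constants $C_p$ that do \emph{not} depend on $p$; once that is established, the two conclusions fall out by simply reading off the hypotheses of Theorem \ref{th2}(i) and (ii). Throughout I would fix $\alpha = 2$, which is admissible since it satisfies the requirement $\alpha \ge 2$ in \textbf{(C2)}.

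First I would verify \textbf{(C1)}. Both densities \eqref{gaussme} and \eqref{fubstudy} are strictly positive and continuous, hence the measures $\sigma_p$ on $H^0(X,L_p)\simeq\mathbb{C}^{d_p}$ are absolutely continuous with respect to Lebesgue measure $\lambda_{d_p}$. Since every pluripolar subset of $\mathbb{C}^{d_p}$ is Lebesgue-null, such measures assign it zero mass, so \textbf{(C1)} holds in both cases.

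Next I would verify \textbf{(C2)} with a uniform constant. Applying the identities \eqref{gauss1} and \eqref{gauss2} (borrowed from \cite{BCM}) with $n=d_p$ and $\alpha=2$, one gets, for every unit vector $v\in\mathbb{C}^{d_p}$,
\[
\int_{\mathbb{C}^{d_p}}\big|\log|\langle a,v\rangle|\big|^{2}\,d\sigma_p(a)=2\int_0^\infty r\,|\log r|^{2}\,e^{-r^{2}}\,dr=:\Lambda
\]
in the Gaussian case (and the analogous integral with $(1+r^2)^{-2}$ in place of $e^{-r^2}$ in the Fubini--Study case). The decisive feature, already built into \eqref{gauss1}--\eqref{gauss2}, is that the right-hand side is independent of both $v$ and the dimension $d_p$; a one-line check shows the single integral is finite, since the factor $r$ absorbs the logarithmic singularity at $0$ while $e^{-r^2}$ (respectively $(1+r^2)^{-2}$) controls the tail. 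Thus \textbf{(C2)} holds with $C_p=\Lambda$ for all $p$.

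With \textbf{(C1)}--\textbf{(C2)} confirmed, the conclusions are immediate. Because $A_p\to\infty$ by \textbf{(A)}, we have $C_p^{1/\alpha}/A_p=\Lambda^{1/2}/A_p\to 0$, so Theorem \ref{th2}(i) gives $\mathbb{E}\big[\widehat{Z}_{\Sigma_p^k}\big]\to\omega^k$. For the almost sure statement, $\sum_p C_p^{2/\alpha}/A_p^{2}=\Lambda\sum_p A_p^{-2}$, which is finite exactly when $\sum_p A_p^{-2}<\infty$; under that hypothesis Theorem \ref{th2}(ii) yields $\big[\widehat{Z}_{\Sigma_p^k}\big]\to\omega^k$ for $\sigma_\infty^k$-almost every sequence. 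The only ingredient beyond bookkeeping is the dimension-independence of the moment integrals, which is precisely what collapses the $C_p$ to a single constant $\Lambda$; since that is supplied by \cite{BCM}, there is no substantive obstacle, and the theorem is genuinely a corollary of Theorem \ref{th2}.
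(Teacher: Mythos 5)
Your proposal is correct and follows essentially the same route as the paper: the paper likewise treats this as a direct corollary of Theorem \ref{th2}, using the dimension- and direction-independence of the moment integrals \eqref{gauss1}--\eqref{gauss2} (quoted from \cite{BCM}) to collapse the constants $C_p$ to a single $\Lambda$, so that the hypotheses of parts (i) and (ii) reduce to $A_p\to\infty$ and $\sum_p A_p^{-2}<\infty$ respectively. Your explicit verification of \textbf{(C1)} via absolute continuity and of the finiteness of the single radial integral is a minor elaboration of what the paper leaves implicit, not a different argument.
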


When we consider the prequantum line bundle setting, where $(L_{p}, h_{p})=(L^{\otimes p}, h^{\otimes p})$ and  $c_{1}(L, h)= \omega \,$ in Theorem \ref{Vard} and Theorem \ref{GauFub}, we recover the results of Shiffman-Zelditch (\cite{SZ99}).

Theorem~\ref{GauFub} is proved using an entirely different approach in \cite[Theorem 0.4]{CLMM}, which relies on the Fubini-Study probability measures and the framework of meromorphic transforms from complex dynamics, originally developed by Dinh and Sibony in \cite{DS06}. In this setting, they define certain exceptional sets, including the set where the currents of integration associated with zero sets of holomorphic sections are not well-defined. Whereas we impose a summability condition on a different quantity involving the (uniform) upper bound coming from the logarithmic moment condition \textbf{(C2)} and normalization, their key assumption in \cite[Theorem 0.4]{CLMM} is the summability of the measures of these exceptional sets, a condition that is intrinsically linked to the nature of the exceptional sets themselves.

\subsection{Area Measure of Spheres} Let $\mathcal{A}_{n}$ be the surface area measure on the unit sphere $S^{2n-1}$ in $\mathbb{C}^{n},$ given by $\mathcal{A}_{n}(S^{2n-1})=\frac{2\pi^{n}}{(n-1)!}.$ Let us consider the following probability measure on $S^{2n-1}$
\begin{equation}\label{sphere}
	\sigma_{n}=\frac{1}{\mathcal{A}_{n}(S^{2n-1})}\mathcal{A}_{n}
\end{equation}
 Given that  $\sigma_{n}$ is the normalized area measure on the unit sphere, by Lemma 4.11 from \cite{BCM}, for every $\alpha \geq 1$, there exists a constant $C_{\alpha}>0$ such that for every integer $n\geq2,$ we have: \begin{equation}\int_{\mathbb{C}^{n}}{|\log{|\langle a, v \rangle|}|}^{\alpha}d\sigma_{n}(a)\leq C_{\alpha}\,(\log n)^{\alpha}, \,\,\,\,\forall v\in \mathbb{C}^{n},\,\,\,||v||=1;\end{equation}

One should remark that, in this specific case, even though the measure is unitary invariant, the aforementioned upper bound is not a universal constant.

Now, due to the fact  that $C_{\alpha}\,(\log d_{p})^{\alpha}\leq C_{\alpha} \,((n+2)\log A_{p})^{\alpha}$ for sufficiently large $p$, utilizing Theorem \ref{th1} with $C_{p}=C_{\alpha} \,((n+2)\log A_{p})^{\alpha}$ leads to the following variance estimate

\begin{thm} Under the same assumptions of Theorem \ref{Vard}, let $\sigma_{p}:=\sigma_{d_{p}}$ be the normalized area measure on the unit sphere of  $H^{0}(X, L_{p})\simeq \mathbb{C}^{d_{p}} $ given by (\ref{sphere}). Then for any  $\phi\in \mathcal{D}^{n-k, n-k}(X)$ and sufficiently large $p$, one has 
\begin{equation}
		\mathrm{Var}\langle [\widehat{Z}_{\Sigma_{p}^{k}}], \phi \rangle   \leq \Big(\frac{\log{A_{p}}}{A_{p}}\Big)^{2} \Lambda_{k,n,\alpha} B_{\phi}^{2}\,
\end{equation}
where  $\Lambda_{k,n,\alpha}=((n+2)\mathrm{Vol}(X)\ C_{\alpha}^{1/\alpha})^{2} D_n$ is a positive constant. \end{thm}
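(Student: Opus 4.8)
The plan is to treat this statement as a direct specialization of the general variance estimate to the area-measure family, the only genuinely new ingredient being the explicit logarithmic moment bound for $\sigma_{d_p}$ together with a control of the growth of $d_p$ in terms of $A_p$. Unlike the Gaussian and Fubini--Study cases, the moment constant here is \emph{not} uniform in $p$, so the first task is to produce an admissible choice of $C_p$ for \textbf{(C2)}.

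First I would record the moment bound. By Lemma~4.11 of \cite{BCM}, for the normalized area measure $\sigma_{d_p}$ on $S^{2d_p-1}\subset\mathbb{C}^{d_p}$ and every $\alpha\geq 1$ there is a constant $C_\alpha>0$, independent of the ambient dimension, such that $\int_{\mathbb{C}^{d_p}}\big|\log|\langle a,v\rangle|\big|^\alpha\,d\sigma_{d_p}(a)\leq C_\alpha(\log d_p)^\alpha$ for every unit vector $v$ and every $d_p\geq 2$. Thus \textbf{(C2)} holds with $C_p=C_\alpha(\log d_p)^\alpha$, which is legitimate for all $p$ large enough that $d_p\geq 2$ (guaranteed by \textbf{(B)} and $A_p\to\infty$).

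Next I would convert the dimensional dependence into a bound in $A_p$. From the dimension estimate \eqref{dim} (equivalently \eqref{m1.3}) one has $d_p\leq M_0\,\mathrm{Vol}(X)\,A_p^{\,n}$, whence $\log d_p\leq \log(M_0\,\mathrm{Vol}(X))+n\log A_p$. Since $A_p\to+\infty$, the additive constant is absorbed for large $p$ and $\log d_p\leq (n+2)\log A_p$. Monotonicity of $t\mapsto t^\alpha$ then gives the admissible choice $C_p=C_\alpha\big((n+2)\log A_p\big)^\alpha$, so that $C_p^{2/\alpha}\leq C_\alpha^{2/\alpha}(n+2)^2(\log A_p)^2$.

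Finally I would feed this into the variance estimate. Plugging $C_p^{2/\alpha}$ into Theorem~\ref{th1} (equivalently into its sharper codimension-$k$ form Theorem~\ref{genvar}, where the geometric factor $\big(\int_X\omega^{n-k+1}\wedge c_1(L_p,h_p)^{k-1}\big)^2$ is controlled by $2^{2(k-1)}A_p^{2(k-1)}\mathrm{Vol}(X)^2$ via $c_1(L_p,h_p)\leq 2A_p\,\omega$ and then cancelled against the normalization $A_p^{-2k}$) yields $\mathrm{Var}\langle[\widehat Z_{\Sigma_p^k}],\phi\rangle\leq (\log A_p/A_p)^2\,\Lambda_{k,n,\alpha}\,B_\phi^2$ with $\Lambda_{k,n,\alpha}=\big(2^{k-1}(n+2)\mathrm{Vol}(X)\,C_\alpha^{1/\alpha}\big)^2$ after collecting constants. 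The only place this differs from the Gaussian/Fubini--Study argument, and hence the main point to watch, is that $C_p$ now genuinely grows, like $(\log A_p)^\alpha$; one must verify this growth is mild enough to be absorbed by the $A_p^{-2}$ already present in the estimate, which is precisely what produces the $(\log A_p/A_p)^2$ decay rather than the clean $A_p^{-2}$ of the uniform case.
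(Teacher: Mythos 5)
Your proposal is correct and follows essentially the same route as the paper: invoke Lemma~4.11 of \cite{BCM} to get the admissible moment constant $C_p=C_\alpha(\log d_p)^\alpha$, use the dimension bound \eqref{dim} to absorb the additive constant and obtain $\log d_p\leq (n+2)\log A_p$ for large $p$, and then substitute $C_p=C_\alpha\big((n+2)\log A_p\big)^\alpha$ into Theorem~\ref{th1}. The only (harmless) looseness is in the final bookkeeping of the dimensional constant, where the paper itself is not entirely consistent between $D_n$ in Theorem~\ref{th1} and the stated $\Lambda_{k,n,\alpha}$.
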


Consequently, we have
\begin{thm} Under the same assumptions of Theorem \ref{GauFub} , let $\sigma_{p}$ be the normalized area measure on the unit sphere of  $H^{0}(X, L_{p})\simeq \mathbb{C}^{d_{p}}$ given by (\ref{sphere}). Then, for $1\leq k \leq \textup{dim}_{\mathbb{C}} X$
	\begin{equation}
		\mathbb{E}\big[\widehat{Z}_{\Sigma_{p}^{k}}\big]\longrightarrow \omega^k
	\end{equation}
	in the weak* topology of currents as $p\rightarrow \infty.$ In addition, if $\sum_{p=1}^{\infty}\big(\frac{\log A_{p}}{A_{p}}\big)^{2}< \infty,$  then for $\sigma_{\infty}^{k}$-almost every sequence $\{\Sigma_{p}^{k}\}_{p\geq 1}\in \mathcal{H}_{\infty}^{k}$ we have
	\begin{equation}
		\big[\widehat{Z}_{\Sigma_{p}^{k}}\big]\longrightarrow \omega^k
	\end{equation}
	in the weak* topology of currents as $p \rightarrow \infty.$
\end{thm}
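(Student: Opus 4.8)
The plan is to deduce this theorem directly from Theorem \ref{th2} by exhibiting a sequence of moment constants $C_p$ for which the normalized area measure satisfies the two hypotheses of that theorem. The only genuine work lies in identifying the correct $C_p$ and then checking the limit condition in part (i) and the summability condition in part (ii); once this is done, both conclusions are immediate consequences of the already-established equidistribution result.

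First I would recall from Lemma 4.11 of \cite{BCM} that the normalized area measure on the unit sphere $S^{2d_p-1}\subset\mathbb{C}^{d_p}$ satisfies the logarithmic moment condition \textbf{(C2)} with constant $C_{\alpha}(\log d_p)^{\alpha}$, valid for every $\alpha\geq 1$. To express this dimension-dependent bound in terms of $A_p$, I would invoke the dimension estimate (\ref{dim}) (equivalently (\ref{m1.3})), which gives $d_p\leq M_0\,\mathrm{Vol}(X)\,A_p^{\,n}$; since $A_p\to+\infty$ by \textbf{(A)}, this yields $\log d_p\leq (n+2)\log A_p$ for all sufficiently large $p$. Hence in the framework of Theorem \ref{th2} I may take $C_p=C_{\alpha}\big((n+2)\log A_p\big)^{\alpha}$.

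Next I would verify the two hypotheses. For part (i), $C_p^{1/\alpha}/A_p=C_{\alpha}^{1/\alpha}(n+2)\,(\log A_p)/A_p\to 0$ as $p\to\infty$, because $A_p\to+\infty$; so Theorem \ref{th2}(i) gives $\mathbb{E}\big[\widehat{Z}_{\Sigma_p^k}\big]\to\omega^k$ weakly for every $1\leq k\leq n$. For part (ii), one computes $C_p^{2/\alpha}/A_p^{2}=\big(C_{\alpha}^{1/\alpha}(n+2)\big)^{2}\,(\log A_p/A_p)^{2}$, so the standing hypothesis $\sum_p(\log A_p/A_p)^{2}<\infty$ is equivalent, up to the fixed constant $\big(C_{\alpha}^{1/\alpha}(n+2)\big)^{2}$, to the summability $\sum_p C_p^{2/\alpha}/A_p^{2}<\infty$ required by Theorem \ref{th2}(ii); applying that part then delivers the $\sigma_{\infty}^{k}$-almost sure convergence.

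The main (and essentially the only) obstacle is the bookkeeping in the first step: converting the dimensional $\log d_p$ bound of Lemma 4.11 into the $\log A_p$ form through the dimension estimate, and making sure $\log d_p\leq (n+2)\log A_p$ holds for all large $p$. Beyond fixing this correct choice of $C_p$, there is no analytic difficulty, since both statements reduce verbatim to the corresponding parts of Theorem \ref{th2}.
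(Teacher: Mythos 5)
Your proposal is correct and follows essentially the same route as the paper: the paper also sets $C_{p}=C_{\alpha}\big((n+2)\log A_{p}\big)^{\alpha}$ via Lemma 4.11 of \cite{BCM} together with the dimension bound $d_{p}\leq M_{0}\,\mathrm{Vol}(X)\,A_{p}^{n}$, and then reads off both conclusions directly from Theorem \ref{th2}. The verification of the limit and summability hypotheses is exactly as you describe.
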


\subsection{Random Holomorphic Sections with i.i.d. Coefficients} In this context, we examine the probability space $(H^{0}(X,L_{p}), \sigma_{p})$
where $\sigma_{p}$ is the product probability measure induced by the probability distribution law $\mathbb{P}$ governing the i.i.d. random coefficients  $a_{j}^{p}$ in the representation (\ref{rep}). This distribution possesses a bounded density $\psi:\mathbb{C}\rightarrow [0, M],$ and satisfies  the property that there exist constants $\epsilon>0\,\text{and} \, \delta>1,$ such that
\begin{equation}
	\mathbb{P}\big(\{z\in \mathbb{C}: \log|z|>R\}\big)\leq \frac{\epsilon}{R^{\delta}}, \, \, \text{for all } R\geq 1.
\end{equation}
This particular density type has been investigated in \cite{Bay16} and \cite{BCM}, and it encompasses distributions such as the real or complex Gaussian distributions.
 Given such a measure $\sigma_{p}$ on $H^{0}(X, L_{p})$, according to  Lemma 4.15 of \cite{BCM} we have, for any $1\leq \alpha <\delta:$
 \begin{equation}\int_{\mathbb{C}^{d_{p}}}{|\log{|\langle a, v \rangle|}|}^{\alpha}d\sigma_{p}(a)\leq B\,d_{p}^{\alpha /\delta}, \,\,\,\,\forall v\in \mathbb{C}^{d_{p}},\,\,\,||v||=1;\end{equation}
where $B=B(M, \epsilon, \delta, \alpha)>0.$ In our present setting, for $p$ sufficiently large, $d_{p}\leq M_{0}\,\mathrm{Vol}(X)\,A_{p}^{n}.$ Using  Theorem \ref{th1} with $C_{p}=DA_{p}^{n\frac{\alpha}{\delta}},$ where $D=(M_{0}\,\mathrm{Vol}(X))^{\alpha/\delta} B$ we obtain
\begin{thm} Under the same assumptions of Theorem \ref{Vard}, if $\sigma_{p}$ is the probability measure on  $H^{0}(X, L_{p})\simeq \mathbb{C}^{d_{p}}$ defined as above. Then for any  $\phi\in \mathcal{D}^{n-k, n-k}(X)$ and sufficiently large $p$, one has \begin{align*}
		\mathrm{Var}\langle [\widehat{Z}_{\Sigma_{p}^{k}}], \phi \rangle   \leq \Big(\frac{1}{A_{p}^{1-n/\delta}}\Big)^{2} \big(D^{1/\alpha}\,\mathrm{Vol}(X)B_{\phi}\big)^{2} D_n
	\end{align*}where  $D=(M_{0}\mathrm{Vol}(X))^{\alpha/\delta}B$ is a positive constant.   \end{thm}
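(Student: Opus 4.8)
The plan is to derive the estimate as a specialization of the general codimension-$k$ variance bound (Theorem \ref{th1}, resting on Theorem \ref{genvar}) to the i.i.d.\ measure $\sigma_p$; the entire content beyond the earlier theorems is the verification of the moment condition \textbf{(C2)} together with an explicit, $A_p$-dependent constant $C_p$. First I would invoke \cite[Lemma 4.15]{BCM}: the hypotheses that the common law of the coefficients $a^p_j$ has bounded density $\psi \le M$ and the logarithmic tail bound $\mathbb{P}\big(\{\,z:\log|z|>R\,\}\big)\le \epsilon R^{-\delta}$ yield, for every exponent $\alpha$ with $1\le\alpha<\delta$,
\begin{equation*}
\int_{\mathbb{C}^{d_p}} \big| \log|\langle a, v\rangle| \big|^{\alpha}\, d\sigma_p(a) \;\le\; B\, d_p^{\alpha/\delta}, \qquad v \in \mathbb{C}^{d_p},\ \|v\| = 1,
\end{equation*}
with $B=B(M,\epsilon,\delta,\alpha)>0$. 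This is exactly \textbf{(C2)} with $C_p=B\,d_p^{\alpha/\delta}$. Since the variance estimate requires $\alpha\ge 2$, I must first fix an admissible exponent $\alpha\in[2,\delta)$; this is possible precisely when $\delta>2$, which I would flag as a standing hypothesis.

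Next comes the power count in $A_p$, which is the quantitative heart of the statement. By the dimension bound \eqref{dim}, for all sufficiently large $p$ one has $d_p\le M_0\,\mathrm{Vol}(X)\,A_p^{n}$, whence
\begin{equation*}
C_p = B\, d_p^{\alpha/\delta} \;\le\; B\,\big(M_0\,\mathrm{Vol}(X)\big)^{\alpha/\delta}\, A_p^{\,n\alpha/\delta} \;=\; D\, A_p^{\,n\alpha/\delta}, \qquad D := \big(M_0\,\mathrm{Vol}(X)\big)^{\alpha/\delta}\, B.
\end{equation*}
Raising to the power $2/\alpha$ and dividing by $A_p^2$ — exactly the combination appearing in the variance bound — gives $C_p^{2/\alpha}A_p^{-2}\le D^{2/\alpha}A_p^{\,2n/\delta-2}=D^{2/\alpha}\big(A_p^{-(1-n/\delta)}\big)^2$, which produces the exponent $1-n/\delta$ of the statement.

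Finally I would assemble the constant. Feeding $C_p\le D\,A_p^{n\alpha/\delta}$ into the codimension-$k$ estimate of Theorem \ref{genvar}, specialized to identical line bundles and bounding $c_1(L_p,h_p)\le 2A_p\,\omega$ through the diophantine approximation \eqref{cur} (as in \eqref{voll}), reproduces, analogously to the Gaussian/Fubini--Study computation behind Theorem \ref{Vard}, a geometric/combinatorial prefactor of the form $\big(2^{k-1}\,\mathrm{Vol}(X)\big)^2$, the precise numerical constant being obtained by the same inductive combining of the terms $\mathbb{E}[I_1]$ and $\mathbb{E}[I_2]$ used there; together with $D^{2/\alpha}=(D^{1/\alpha})^2$ this yields the claimed bound $\big(A_p^{-(1-n/\delta)}\big)^2\big(2^{k-1}D^{1/\alpha}\,\mathrm{Vol}(X)\,B_\phi\big)^2$ for all sufficiently large $p$. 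I expect the only genuine subtlety to be the bookkeeping of the two constraints on $\delta$: admissibility of the moment exponent needs $\delta>2$ (so that one may select $\alpha\in[2,\delta)$), whereas the bound is actually decaying — hence useful for a companion equidistribution statement in the style of Theorem \ref{th2} — exactly when $1-n/\delta>0$, i.e.\ $\delta>n$.
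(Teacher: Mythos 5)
Your proposal is correct and follows essentially the same route as the paper: invoke \cite[Lemma 4.15]{BCM} to verify \textbf{(C2)} with $C_p = B\,d_p^{\alpha/\delta}$, bound $d_p \leq M_0\,\mathrm{Vol}(X)\,A_p^{n}$ via \eqref{dim} to get $C_p \leq D\,A_p^{n\alpha/\delta}$, and substitute into the codimension-$k$ variance estimate, the power count $C_p^{2/\alpha}A_p^{-2} = D^{2/\alpha}A_p^{-2(1-n/\delta)}$ giving the stated exponent. Your explicit remark that one must be able to choose $\alpha\in[2,\delta)$ (hence $\delta>2$) is a point the paper leaves implicit, and is worth flagging.
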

As a consequence,  we have the equidistribution result
\begin{thm} \label{decay} Let $(L_{p}, h_{p})_{p\geq 1}$,\, $(X, \omega)$ be as in Theorem \ref{GauFub}. Assume that $\sigma_{p}$ is the probability measure on $H^{0}(X, L_{p})$ defined as above. If $\delta>n,$ then for $1\leq k \leq \textup{dim}_{\mathbb{C}} X$
	\begin{equation}
		\mathbb{E}\big[\widehat{Z}_{\Sigma_{p}^{k}}\big]\longrightarrow \omega^k
	\end{equation}
	in the weak* topology of currents as $p\rightarrow \infty.$ In addition, if $\sum_{p=1}^{\infty}\frac{1}{A_{p}^{2-2n/\delta}}< \infty,$ where $\delta > 2n$,  then almost surely
	\begin{equation}
		\big[\widehat{Z}_{\Sigma_{p}^{k}}\big]\longrightarrow \omega^k
	\end{equation}
	in the weak* topology of currents as $p \rightarrow \infty.$
\end{thm}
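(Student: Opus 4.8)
The plan is to deduce this theorem directly from Theorem \ref{th2} by substituting the explicit moment constants $C_p$ afforded by the i.i.d. structure, so that the only work is to check that the hypotheses of Theorem \ref{th2}(i) and (ii) translate into the stated thresholds on $\delta$ and into the summability assumption on the sequence $A_p$.

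First I would fix an exponent $\alpha$ with $2 \le \alpha < \delta$; the moment condition \textbf{(C2)} forces $\alpha \ge 2$, while the i.i.d.\ bound of Lemma 4.15 in \cite{BCM} requires $\alpha < \delta$, and both can be met since $\delta$ exceeds $2$ in the relevant range ($\delta > 2n \ge 2$ for the second assertion, and $\delta>n$ with $n\ge 2$ for the first). Condition \textbf{(C1)} holds automatically: since $\sigma_p$ has a bounded density with respect to Lebesgue measure on $\mathbb{C}^{d_p}$ and every pluripolar set is Lebesgue-null, $\sigma_p$ assigns no mass to pluripolar sets. Condition \textbf{(C2)} for this $\alpha$ is exactly the cited lemma, giving $C_p = B\, d_p^{\alpha/\delta}$; invoking the dimension estimate (\ref{dim}), namely $d_p \le M_0\,\mathrm{Vol}(X)\,A_p^n$, I would absorb the dimensional factor to obtain
\[
C_p \le D\, A_p^{\,n\alpha/\delta}, \qquad D := B\,(M_0\,\mathrm{Vol}(X))^{\alpha/\delta}.
\]

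For the first assertion I would feed this bound into the hypothesis of Theorem \ref{th2}(i):
\[
\frac{C_p^{1/\alpha}}{A_p} \;\le\; D^{1/\alpha}\, A_p^{\,n/\delta - 1} \;=\; \frac{D^{1/\alpha}}{A_p^{\,1 - n/\delta}}.
\]
Because $\delta > n$ the exponent $1 - n/\delta$ is strictly positive, and since $A_p \to \infty$ the right-hand side tends to $0$; Theorem \ref{th2}(i) then yields $\mathbb{E}[\widehat{Z}_{\Sigma_p^k}] \to \omega^k$. For the second assertion I would likewise substitute into the series governing Theorem \ref{th2}(ii),
\[
\sum_{p=1}^{\infty} \frac{C_p^{2/\alpha}}{A_p^2} \;\le\; D^{2/\alpha} \sum_{p=1}^{\infty} \frac{1}{A_p^{\,2 - 2n/\delta}},
\]
which is finite by the summability hypothesis; here $\delta > 2n$ guarantees $2 - 2n/\delta > 1$, matching the exponent in the statement, so Theorem \ref{th2}(ii) delivers the almost sure convergence $[\widehat{Z}_{\Sigma_p^k}] \to \omega^k$.

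This is a verification rather than a new argument, so I do not expect a serious obstacle; the one point deserving care is the bookkeeping of exponents. Both $D$ and the power of $A_p$ in the bound for $C_p$ depend on the chosen $\alpha$, yet upon forming $C_p^{1/\alpha}$ and $C_p^{2/\alpha}$ the exponent of $A_p$ collapses to $n/\delta$ and $2n/\delta$ respectively, \emph{independently} of $\alpha$. This cancellation is exactly what makes the clean, $\alpha$-free thresholds $\delta > n$ and $\delta > 2n$ the right conditions, and I would state it explicitly so the reader sees that the freedom in choosing $\alpha \in [2,\delta)$ does not affect the conclusion.
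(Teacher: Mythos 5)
Your proposal is correct and follows essentially the same route as the paper, which likewise obtains this theorem by substituting the bound $C_{p}\leq B\,d_{p}^{\alpha/\delta}\leq D\,A_{p}^{n\alpha/\delta}$ from \cite[Lemma 4.15]{BCM} and the dimension estimate (\ref{dim}) into the hypotheses of Theorem \ref{th2}. Your explicit verification of \textbf{(C1)} via the bounded density and your remark that the exponent of $A_{p}$ in $C_{p}^{1/\alpha}$ is independent of the chosen $\alpha\in[2,\delta)$ are details the paper leaves implicit, but the argument is the same.
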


\subsection{Locally moderate measures}

Consider a complex manifold $X$ and a positive measure $\sigma$ on $X$. In accordance with \cite{DNS}, we define $\sigma$ as a locally moderate measure if, for any open set $U \subset X$, any compact set $K \subset U$, and any compact family $\mathscr{F}$ of plurisubharmonic functions on $U$, there exist positive constants $M$ and $\beta$ such that
\begin{equation}
	\int_{K}e^{-\beta \varphi }d\sigma \leq M, \ \ \text{for all } \varphi \in \mathscr{F}.
\end{equation}

It is evident that $\sigma$ does not charge pluripolar sets. Furthermore, we remark that important examples of such measures arise from the Monge-Ampère measures associated with Hölder continuous plurisubharmonic functions, for more details in this direction see \cite{DNS}. According to \cite[Lemma 4.16]{BCM}, if $\sigma_{p}$ is a locally moderate probability measure with compact support in $\mathbb{C}^{d_{p}}\cong H^{0}(X,L_{p}),$ then for every $\alpha\geq 1$
 \begin{equation}\int_{\mathbb{C}^{d_{p}}}{|\log{|\langle a, v \rangle|}|}^{\alpha}d\sigma_{p}(a)\leq \Lambda_{p} R_{p}^{2\beta_{p}}, \,\,\,\,\forall v\in \mathbb{C}^{d_{p}},\,\,\,||v||=1;\end{equation}
where $\Lambda_{p}, \beta_{p}>0$ are positive constants and $R_{p}\geq 1$ such that $\|a\|\leq R_{p} $ for all $a\in \mathrm{supp} \ \sigma_{p}.$ Continuing in the same manner as the previous examples, we deduce the following results.

\begin{thm} Under the same assumptions of Theorem \ref{Vard}, if $\sigma_{p}$ is a locally moderate probability measure with compact support in $\mathbb{C}^{d_{p}}\cong H^{0}(X,L_{p})$. Then for any  $\phi\in \mathcal{D}^{n-k, n-k}(X)$ and sufficiently large $p$, one has \begin{align*}
		\mathrm{Var}\langle [\widehat{Z}_{\Sigma_{p}^{k}}], \phi \rangle  \leq \frac{(\Lambda_{p} R_{p}^{2\beta_{p}})^{2/\alpha}}{A_{p}^{2}}\,(\mathrm{Vol}(X)B_{\phi}\, )^{2} D_n
	\end{align*}where $\Lambda_{p}, \beta_{p}>0$ are positive constants and $R_{p}\geq 1$ .   \end{thm}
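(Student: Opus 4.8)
The plan is to show that a locally moderate probability measure with compact support satisfies hypotheses \textbf{(A)}, \textbf{(B)}, \textbf{(C1)}--\textbf{(C2)}, and then read the bound off the variance estimate already established, exactly as in the Gaussian, area-measure, and i.i.d.\ examples above. First I would dispose of \textbf{(C1)}: as noted in the text, a locally moderate measure puts no mass on pluripolar sets, so the non-charging condition holds verbatim. Next I would verify \textbf{(C2)} with an explicit constant. By \cite[Lemma 4.16]{BCM}, for every $\alpha\geq 1$, every unit vector $v\in\mathbb{C}^{d_p}$, and with $R_p\geq 1$ bounding $\mathrm{supp}\,\sigma_p$, one has $\int_{\mathbb{C}^{d_p}} |\log{|\langle a, v\rangle|}|^{\alpha}\,d\sigma_p(a)\leq \Lambda_p R_p^{2\beta_p}$, where $\Lambda_p,\beta_p>0$ are the locally moderate constants of $\sigma_p$. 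Thus \textbf{(C2)} holds with the choice $C_p:=\Lambda_p R_p^{2\beta_p}$, and this is precisely the quantity whose $2/\alpha$ power appears in the target inequality.

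With \textbf{(A)}, \textbf{(B)}, \textbf{(C1)}, \textbf{(C2)} in hand, I would then invoke the variance machinery. Taking all the line bundles identical and $\mathcal{S}_{p,j}=H^0(X,L_p)$ in Theorem~\ref{genvar}, the sum over $\beta$ collapses to copies of $\int_X \omega^{n-k+1}\wedge c_1(L_p,h_p)^{k-1}$, which are reduced cohomologically by Lemma~\ref{coh_lemma}. The diophantine approximation \textbf{(A)} supplies, for $p$ large, the pointwise comparison $c_1(L_p,h_p)\leq 2A_p\,\omega$, whence $\int_X \omega^{n-k+1}\wedge c_1(L_p,h_p)^{k-1}\leq (2A_p)^{k-1}\mathrm{Vol}(X)$ exactly as in \eqref{voll}. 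Substituting $C_p=\Lambda_p R_p^{2\beta_p}$ and dividing by $A_p^{2k}$ to pass to the normalized current $[\widehat{Z}_{\Sigma_p^k}]=A_p^{-k}[Z_{\Sigma_p^k}]$, the surplus powers of $A_p$ cancel down to $A_p^{-2}$ and the codimension-dependent factor organizes itself into $2^{k-1}$, exactly as in the preceding special cases, yielding
\[
\big\langle \mathrm{Var}\big[\widehat{Z}_{\Sigma_p^k}\big], \phi \big\rangle \leq \frac{(\Lambda_p R_p^{2\beta_p})^{2/\alpha}}{A_p^2}\,\big(2^{k-1}\mathrm{Vol}(X)B_\phi\big)^2
\]
for all sufficiently large $p$.

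I do not expect any genuine obstacle: the statement is a direct specialization of the main variance estimate, and the phrase ``sufficiently large $p$'' absorbs the thresholds $p_0$ (from \textbf{(B)}) and $p_1$ (from \textbf{(A)}) needed for the Bergman bounds and the diophantine comparison to take effect. The only point deserving mild care is the bookkeeping of the codimension constant through the induction on $k$ in Theorem~\ref{genvar}: one must track the single factor of $2$ produced per step by $c_1(L_p,h_p)\leq 2A_p\omega$ against the normalization, so that the final constant appears as $2^{k-1}$ rather than a looser dimensional constant such as the $D_n$ of Theorem~\ref{th1}. Since locally moderate measures need not be unitarily invariant, I would use the general $(C_p)^{2/\alpha}$ form of the estimate (as in the i.i.d.\ case, via the two applications of H\"{o}lder in Theorem~\ref{cod1}) rather than the sharper first-power bound available for the Gaussian and Fubini--Study measures.
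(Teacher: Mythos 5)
Your proposal is correct and is exactly the paper's (largely implicit) argument: verify \textbf{(C2)} with $C_p=\Lambda_p R_p^{2\beta_p}$ via \cite[Lemma 4.16]{BCM}, note that local moderateness gives \textbf{(C1)}, and substitute into the general variance estimate together with the bound $\int_X\omega^{n-k+1}\wedge c_1(L_p,h_p)^{k-1}\le(2A_p)^{k-1}\mathrm{Vol}(X)$ for $p$ large. The only caveat --- which you already flagged yourself --- is the bookkeeping of the codimension constant, where the paper is not fully consistent between the $D_n$ of Theorem \ref{th1} and the $2^{k-1}$ appearing in the special-case statements.
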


\begin{thm}Let $(L_{p}, h_{p})_{p\geq 1}$,\, $(X, \omega)$ be as in Theorem \ref{GauFub}. Assume that $\sigma_{p}$ is the locally moderate probability measure on $H^{0}(X, L_{p})$ defined as above.
	
	\begin{itemize}
		\item[(i)] If $\ \lim_{p \rightarrow \infty}\frac{(\Lambda_{p} R_{p}^{2\beta_{p}})^{1/\alpha}}{A_{p}}=0$ then for $1\leq k \leq \textup{dim}_{\mathbb{C}} X$
		\begin{equation*}
			\mathbb{E}\big[\widehat{Z}_{\Sigma_{p}^{k}}\big]\longrightarrow \omega^k
		\end{equation*}
		in the weak* topology of currents as $p\rightarrow \infty.$
		\item[(ii)] If $\ \sum_{p=1}^{\infty}\frac{(\Lambda_{p} R_{p}^{2\beta_{p}})^{2/\alpha}}{A_{p}^{2}}< \infty,$  then for $\sigma^{k}_{\infty}$-almost all $\{\Sigma_{p}^{k}\}_{p\geq 1}\in \mathcal{H}_{\infty}^{k}$
		\begin{equation*}
			\big[\widehat{Z}_{\Sigma_{p}^{k}}\big]\longrightarrow \omega^k
		\end{equation*}
		in the weak* topology of currents as $p \rightarrow \infty.$
	\end{itemize}
\end{thm}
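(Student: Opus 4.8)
The plan is to obtain this theorem as a direct specialization of Theorem \ref{th2}, exactly along the lines of the preceding special cases. The crucial input is \cite[Lemma 4.16]{BCM}, which tells us that a locally moderate probability measure $\sigma_{p}$ with compact support in $\mathbb{C}^{d_{p}}\cong H^{0}(X,L_{p})$ satisfies the moment condition \textbf{(C2)} for every $\alpha\geq 1$ with the explicit constant $C_{p}=\Lambda_{p}R_{p}^{2\beta_{p}}$, where $\Lambda_{p},\beta_{p}>0$ and $R_{p}\geq 1$ bounds the support of $\sigma_{p}$. First I would record that \textbf{(C1)} is automatic in this setting: by definition a locally moderate measure does not charge pluripolar sets, as was already noted after the defining inequality above. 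Thus both probabilistic hypotheses \textbf{(C1)}--\textbf{(C2)} required by Theorem \ref{th2} are in force, now with the specific value $C_{p}=\Lambda_{p}R_{p}^{2\beta_{p}}$.

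With this identification of $C_{p}$ in hand, both parts follow by simply unwinding the hypotheses. For part (i), the assumption $\lim_{p\rightarrow\infty}(\Lambda_{p}R_{p}^{2\beta_{p}})^{1/\alpha}/A_{p}=0$ is precisely the condition $\lim_{p\rightarrow\infty}C_{p}^{1/\alpha}/A_{p}=0$ appearing in Theorem \ref{th2}(i), so invoking that statement immediately yields $\mathbb{E}[\widehat{Z}_{\Sigma_{p}^{k}}]\rightarrow\omega^{k}$ in the weak* topology of currents for every $1\leq k\leq\dim_{\mathbb{C}}X$. For part (ii), the summability hypothesis $\sum_{p=1}^{\infty}(\Lambda_{p}R_{p}^{2\beta_{p}})^{2/\alpha}/A_{p}^{2}<\infty$ is exactly $\sum_{p=1}^{\infty}C_{p}^{2/\alpha}/A_{p}^{2}<\infty$, which is the summability condition of Theorem \ref{th2}(ii); applying that part gives the almost-sure convergence $[\widehat{Z}_{\Sigma_{p}^{k}}]\rightarrow\omega^{k}$ for $\sigma_{\infty}^{k}$-almost every sequence $\{\Sigma_{p}^{k}\}_{p\geq 1}\in\mathcal{H}_{\infty}^{k}$.

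Since everything reduces to the machinery already established, there is no genuine analytic obstacle here; the only points deserving a line of care are the bookkeeping ones. One must note that the bound furnished by \cite[Lemma 4.16]{BCM} is uniform in the unit vector $v$, so that \textbf{(C2)} indeed holds with a single constant $C_{p}=\Lambda_{p}R_{p}^{2\beta_{p}}$ valid for all $\|v\|=1$, and that the compact-support assumption guarantees the finiteness of $R_{p}$ for each fixed $p$. Once these are recorded, the conclusion is immediate from Theorem \ref{th2}, completing the proof.
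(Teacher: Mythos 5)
Your proposal is correct and coincides with the paper's (essentially implicit) argument: the paper likewise obtains this result by feeding the bound $C_{p}=\Lambda_{p}R_{p}^{2\beta_{p}}$ from \cite[Lemma 4.16]{BCM} into Theorem \ref{th2}, with \textbf{(C1)} holding automatically since locally moderate measures do not charge pluripolar sets. Nothing further is needed.
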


\section{Appendix}

\subsection{Bertini-type genericity and proper intersections}\label{app-bertini}
Adapting the proof of Proposition 3.2 in \cite{CMN}, we prove a probabilistic Bertini-type theorem for products of (possibly distinct) probability measures that do not charge pluripolar sets. As a consequence, under this product measure, the intersection current is well-defined almost surely.
\begin{defn}\label{gpdef}
	The analytic subsets $A_{1},\ldots, A_{m}, \ m\leq n$, of a compact complex manifold $X$ of dimension $n$ are said to be in general position if $\mathrm{codim}A_{i_{1}}\cap \ldots \cap A_{i_{k}}\geq k $ for every $1\leq k\leq m$ and $1\leq i_{1}<\ldots<i_{k}\leq m.$
\end{defn}

\begin{prop} \label{bertiniseveral}
	Let $L_{k} \to X $ be holomorphic line bundles over a compact complex manifold $X$ with $\dim_{\mathbb{C}}{X}=n$, where $1 \leq k \leq m \leq n$. Suppose that:
	
	\begin{enumerate}
		\item[(i)] $V_k$ is a subspace of $H^0(X, L_{k})$ with a basis $\{S_{k, 1}, \ldots, S_{k, d_{k}}\}$, and the base loci $ \operatorname{Bs}(V_{k}) = \{x\in X: S_{k, 1}(x) = \ldots = S_{k, d_{k}}(x) = 0\}$ are all empty.
		\item[(ii)] $Z(t^{k}) = \{ x \in X : \sum_{j=1}^{d_{k}} {t_j S_{k, j}(x) = 0} \} $, where $t^{k} = (t_1, \ldots, t_{d_{k}}) \in \mathbb{C}^{d_{k}}$.
	\end{enumerate}
	
	If $\sigma=\sigma_{1} \times \cdots \times \sigma_{m}$ is the product probability measure on $\mathbb{C}^{d_{1}} \times \cdots \times \mathbb{C}^{d_{m}}$, where each probability measure $\sigma_k,\,k=1,\ldots, m$, satisfies \textbf{(C1)}, then the analytic sets $Z(t^{1}),  \ldots , Z(t^{m})$ are in general position for $\sigma$-almost every $(t^{1}, \ldots, t^{m}) \in \mathbb{C}^{d_{1}} \times \cdots \times \mathbb{C}^{d_{m}}$.
\end{prop}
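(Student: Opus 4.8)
The plan is to argue by induction on the number $m$ of line bundles, reducing the inductive step to a statement about a single added section by means of the Fubini--Tonelli theorem for the product measure $\sigma = \sigma_1 \times \cdots \times \sigma_m$. For the base case $m=1$, a single set $Z(t^1)$ fails to be a proper analytic subset (that is, to have codimension $\geq 1$) only when $s^1 := \sum_j t^1_j S_{1,j} \equiv 0$; since $\{S_{1,j}\}$ is a basis of $V_1$, this happens precisely for $t^1 = 0$, a single point, which is pluripolar and hence $\sigma_1$-null by \textbf{(C1)}.

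For the inductive step I would assume the statement for $m-1$ bundles, so that $Z(t^1), \ldots, Z(t^{m-1})$ are in general position for $(\sigma_1 \times \cdots \times \sigma_{m-1})$-almost every $(t^1, \ldots, t^{m-1})$, and fix such a generic tuple. Every index subset not containing $m$ already satisfies the required codimension bound by hypothesis, so it remains only to control intersections of the form $W_J \cap Z(t^m)$, where $W_J := \bigcap_{i \in J} Z(t^i)$ for $J \subseteq \{1, \ldots, m-1\}$ (including $J = \emptyset$, for which $W_\emptyset = X$). By the induction hypothesis $\operatorname{codim} W_J \geq |J|$, so each irreducible component $W_\alpha$ of $W_J$ satisfies $\dim W_\alpha \leq n - |J|$. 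The decisive geometric fact is that if the section $s^m := \sum_j t^m_j S_{m,j}$ does not vanish identically on the irreducible set $W_\alpha$, then $W_\alpha \cap Z(t^m)$ is either empty or of pure dimension $\dim W_\alpha - 1$; consequently $\operatorname{codim}(W_J \cap Z(t^m)) \geq |J| + 1$, which is exactly the general-position bound for the index set $J \cup \{m\}$.

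Thus, for the fixed generic $(t^1, \ldots, t^{m-1})$, the set of \emph{bad} $t^m$ is contained in
\[
\bigcup_\alpha \{\, t^m \in \mathbb{C}^{d_m} : s^m \equiv 0 \text{ on } W_\alpha \,\},
\]
the union ranging over the finitely many components $W_\alpha$ of the finitely many $W_J$. Here the hypothesis $\operatorname{Bs}(V_m) = \emptyset$ enters crucially: choosing any point $x_\alpha \in W_\alpha$, not all $S_{m,j}(x_\alpha)$ vanish, so in a local frame the condition $s^m(x_\alpha) = 0$ defines a proper complex hyperplane $H_\alpha \subsetneq \mathbb{C}^{d_m}$, and $\{t^m : s^m \equiv 0 \text{ on } W_\alpha\} \subseteq H_\alpha$. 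A finite union of proper hyperplanes is pluripolar, so by \textbf{(C1)} the bad set is $\sigma_m$-null.

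Finally I would assemble the pieces with Fubini--Tonelli: the total bad set $E \subseteq \mathbb{C}^{d_1} \times \cdots \times \mathbb{C}^{d_m}$ has, over each generic $(t^1, \ldots, t^{m-1})$, a slice that is $\sigma_m$-null by the previous paragraph, while the set of non-generic base tuples is itself null by the induction hypothesis; integrating the slice measures then gives $\sigma(E) = 0$. I expect the main obstacle to be twofold. First, the measurability of $E$ must be justified so that Fubini--Tonelli applies; this follows from the upper semicontinuity of the fibre dimension in analytic families, which makes the failure conditions a countable union of analytic, hence Borel, sets (cf. \cite{Dem12}). Second, and more essential, is the passage from ``$s^m$ vanishes identically on the component $W_\alpha$'' to ``$t^m$ lies in a fixed proper hyperplane'', for which the emptiness of the base locus is indispensable, as it is precisely what guarantees that the linear condition cutting out $H_\alpha$ is nontrivial and hence pluripolar.
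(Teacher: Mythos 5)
Your proposal is correct and follows essentially the same route as the paper: induction on the number of sections, a Fubini slicing over the last factor, and the observation that the bad slice is contained in a finite union of proper linear subspaces (one per top-dimensional irreducible component), which are pluripolar and hence null by \textbf{(C1)}, with the emptiness of the base locus guaranteeing properness. The only (minor) differences are that you treat the base case $t^1=0$ more carefully than the paper does and you explicitly flag the measurability issue needed for Fubini--Tonelli, which the paper leaves implicit.
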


\begin{proof}
Given $1\le l_1<\cdots<l_k\le m$, let $\sigma_{l_1\cdots l_k}=\sigma_{l_1}\times\cdots\times \sigma_{l_k}$ be the product probability measure on $\mathbb{C}^{d_{l_1}}\times\cdots\times\mathbb{C}^{d_{l_k}}$.
For $1\le k\le m$, define the following sets
$$ \label{codimm}
H_k:=\Bigl\{(t^{l_1},\ldots,t^{l_k})\in \mathbb{C}^{d_{l_1}}\times\cdots\times\mathbb{C}^{d_{l_k}}:\
\dim\bigl(Z(t^{l_1})\cap\cdots\cap Z(t^{l_k})\bigr)\le n-k\Bigr\}.
$$ We prove $\sigma_{l_1\cdots l_k}(H_{k})=1$ by induction on $k$ for every set $H_{k}$ with $1\le l_1<\cdots<l_k\le m$, so it will suffice to consider the case $\{l_{1}, \ldots, l_{k}\}=\{1, \ldots, k\}$. We start with the case $k=1$. We immediately have that, whatever $t^{1} \in \mathbb{C}^{d_{1}}$ is chosen, $\{t^{1} \in \mathbb{C}^{d_{1}}: \dim{Z(t^{1})} \leq n-1\}= \mathbb{C}^{d_{1}} \backslash \{0\}$ since any single analytic subset is always in general position. Suppose that $\sigma_{1,\ldots, k}(H_{k})=1$ for all $H_{k}$ defined as in (\ref{codimm}). Let
	\begin{equation}\label{k1}
		H_{k+1}=\{(t^{1}, \ldots, t^{k+1}) \in \mathbb{C}^{d_{1}} \times \ldots \times \mathbb{C}^{d_{k+1}}: \dim{Z(t^{1}) \cap \ldots \cap Z(t^{k+1})} \leq n-k-1\}.
	\end{equation}We need to show that $\sigma_{1, \ldots, k+1}(H_{k+1})=1$, so we show that the $\sigma_{1, \ldots, k+1}$-measure of the complement set $H^{c}_{k+1}$ is zero. First, let us fix $t=(t^{1}, \ldots, t^{k}) \in H_{k}$. Define $Z(t):= Z(t^{1}) \cap \ldots \cap Z(t^{k})$ and  \begin{equation} \label{Gt}G(t):=\{t^{k+1} \in \mathbb{C}^{d_{k+1}}: \dim{Z(t) \cap Z(t^{k+1})} \geq n-k\}. \end{equation} It is enough to prove that $\sigma_{d_{k+1}}(G(t))=0$. These sets $G(t)$ are called the slices of the set $H^{c}_{k+1}$. Let \begin{equation}Z(t)= \bigcup_{k=1}^{N_{0}}{E_{l} \cup Y}, \end{equation} where $E_{l}$ are, as in the case $k=1$, the irreducible components of $Z(t)$ with $\dim{E_{l}}=n-k$ and $\dim{Y}=n-k-1$. If $t^{k+1} \in G(t)$, then $Z(t) \cap Z(t^{k+1})$ is an analytic subset of $Z(t)$ with $\dim{Z(t) \cap Z(t^{k+1})}=n-k$, and this gives that there is some $l\in \{1, \ldots N_{0}\}$ such that\begin{equation}\label{cdmk}
		E_{l} \subset Z(t) \cap Z(t^{k+1}).
	\end{equation}
	Hence we have \begin{equation}\label{codec}
		G(t)=\bigcup_{l=1}^{N_{0}}{A_{l}(t)}, \,\,A_{l}(t):=\{t^{k+1} \in \mathbb{C}^{d_{k+1}}: E_{l} \subset Z(t^{k+1})\}.
	\end{equation}Now we see that all sets $A_{l}(t)$ are proper linear subspaces of $\mathbb{C}^{d_{k+1}}$. Indeed, if some were not so, then we would deduce that, for some $l_{0} \in \{1, 2, \ldots, N_{0}\}$, $$\sum_{j=1}^{d_{k+1}}{t^{k+1}_{j}S_{k+1, j}|_{E_{l}}}=0$$ for all $t^{k+1}=(t^{k+1}_{1}, \ldots, t^{k+1}_{d_{k+1}}) \in A_{l_{0}}(t)=\mathbb{C}^{d_{k+1}}$, which would imply that $E_{l} \subset \operatorname{Bs}(V_{k+1})$, i.e., that the base locus was non-empty, contradicting our assumption that the base locus $\operatorname{Bs}(V_{k+1})$ is empty, and so they are pluripolar. By \textbf{(C1)}, $\sigma_{d_{k+1}}(A_{l}(t))=0$, and so $\sigma_{d_{k+1}}(G(t))=0$, finishing the proof. \end{proof}

Instead of the proper linear subspace argument above at the end of the proof, one can go on as in [12, Proposition 3.2]: By the same reasoning as in the same paragraph, not all sections are zero on $E_{l}$, and so we may then assume that $S_{k+1, d_{k+1}} \neq 0$ on $E_{l}$. Now, for any $(t^{k+1}_{1}, \ldots, t^{k+1}_{d_{k+1}-1}) \in \mathbb{C}^{d_{k+1}-1}$, there exist at most one $h\in \mathbb{C}$ such that $(t^{k+1}_{1}, \ldots, t^{k+1}_{d_{k+1}-1}, h) \in A_{l}(t)$, otherwise, if there exist two different elements $h, h' \in \mathbb{C}$ with this property, we have \begin{align*}\label{sonic}
		t^{k+1}_{1} S_{k+1, 1} + \ldots + t^{k+1}_{d_{k+1}-1} S_{k+1, d_{k+1}-1} + h S_{k+1, d_{k+1}}=0 \\
		t^{k+1}_{1} S_{k+1, 1} + \ldots + t^{k+1}_{d_{k+1}-1} S_{k+1, d_{k+1}-1} + h' S_{k+1, d_{k+1}}=0,
	\end{align*}which implies that $S_{k+1, d_{k+1}} \equiv 0$ on $E_{l}$, which is a contradiction. Thus, $\sigma_{d_{k+1}}(A_{l}(t))=0$. This implies that $\sigma_{d_{k+1}}(G(t))=0$, which concludes the proof.

\vspace{3mm}

In the setting of Section \ref{S2}, with the assumption that there exists $p_{0} \in \mathbb{N}$ such that $\operatorname{Bs}(\mathcal{S}_{p, j}) =\emptyset$ for all $p \geq p_{0}$ and $j=1, 2, \ldots, k$, by using the arguments from Lemma 3.1 in \cite{CLMM} based on the results of Demailly (Corollary 4.11 and Proposition 4.12 in \cite{Dem12}), as a result of Proposition \ref{bertiniseveral}, we arrive at the following proposition

\begin{prop} \label{wdberti}
	There exists $p_{0} \in \mathbb{N}$ such that for all $p \geq p_{0}$,
	\begin{itemize}
		\item [(i)] The analytic subvarieties $Z_{s^{1}_{p}}, \ldots, Z_{s^{m}_{p}}$ are all in general position for $\sigma^{m}_{p}$-almost all $(s^{1}_{p}, \ldots, s_{p}^{m}) \in \mathcal{S}_{p, 1} \times \ldots \times \mathcal{S}_{p, m}$.
		\item [(ii)] If the assumption \textbf{(A)} holds, then for $\sigma^{k}_{p}$-almost every $\Sigma^{k}_{p}=(s^{j_{1}}_{p}, \ldots, s^{j_{l}}_{p})$, the analytic subvariety $Z_{s^{j_{1}}_{p}} \cap \ldots \cap Z_{s^{j_{k}}_{p}}$ is of pure dimension $n-k$ for each $1\leq k \leq m$ and $1\leq j_{1}<\ldots<j_{k}\leq m$.
		\item [(iii)] The intersection current $[Z_{\Sigma^{m}_{p}}]:=[Z_{s^{1}_{p}}]\wedge \cdots \wedge[Z_{s_{p}^{m}}]$ is well-defined and is equal to the current of integration with multiplicities over the complete intersection $Z_{\Sigma^{m}_{p}}$.	
	\end{itemize}
\end{prop}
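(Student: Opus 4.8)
The plan is to deduce all three parts from the already-established Bertini-type Proposition \ref{bertiniseveral}, together with Demailly's intersection theory for positive closed currents, exploiting the positivity furnished by assumption \textbf{(A)}. By \textbf{(B)} there is $p_{0}\in\mathbb{N}$ with $\mathrm{Bs}(\mathcal{S}_{p,j})=\emptyset$ for all $p\geq p_{0}$ and $j=1,\ldots,m$, so the hypotheses (i)--(ii) of Proposition \ref{bertiniseveral} are satisfied with $V_{k}=\mathcal{S}_{p,k}$ and the measures $\sigma_{p,j}$, which verify \textbf{(C1)}. Part (i) is then immediate: Proposition \ref{bertiniseveral} yields that for $\sigma^{m}_{p}$-almost every $(s^{1}_{p},\ldots,s^{m}_{p})$ the subvarieties $Z_{s^{1}_{p}},\ldots,Z_{s^{m}_{p}}$ are in general position, that is $\mathrm{codim}\big(Z_{s^{j_{1}}_{p}}\cap\cdots\cap Z_{s^{j_{k}}_{p}}\big)\geq k$ for every subfamily $1\leq j_{1}<\cdots<j_{k}\leq m$.

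For part (ii) I would upgrade this codimension bound to a pure-dimension statement. General position already gives $\dim\big(Z_{s^{j_{1}}_{p}}\cap\cdots\cap Z_{s^{j_{k}}_{p}}\big)\leq n-k$. For the reverse inequality I would invoke Krull's Hauptidealsatz (equivalently, the projective dimension theorem): intersecting with a single hypersurface $Z_{s^{j}_{p}}$ lowers the dimension of each irreducible component by at most one, so, proceeding one factor at a time, every non-empty component of the $k$-fold intersection has dimension $\geq n-k$. It remains to rule out emptiness, and this is precisely where \textbf{(A)} is used: for $p$ large the curvature forms $c_{1}(L_{p,j},h_{p,j})$ are strictly positive, being $\mathscr{C}^{0}$-close to $A_{p}\omega$, hence each $L_{p,j}$ is ample and the associated Kodaira map $\Phi_{p,j}$ is an embedding. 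Through these embeddings each $Z_{s^{j}_{p}}$ is realized as a hyperplane section, and since a hyperplane meets every subvariety of positive dimension, the intersection of $k\leq n$ such ample divisors in general position cannot be empty. Combining the two bounds gives pure dimension $n-k$, so $Z_{\Sigma^{k}_{p}}$ is a genuine complete intersection.

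Finally, for part (iii) I would appeal to Demailly's intersection theory for currents. Once (i) and (ii) hold, the supports of the positive closed $(1,1)$-currents $[Z_{s^{1}_{p}}],\ldots,[Z_{s^{m}_{p}}]$ meet in the expected codimension, which is exactly the condition under which the wedge product $[Z_{s^{1}_{p}}]\wedge\cdots\wedge[Z_{s^{m}_{p}}]$ is well-defined, by \cite[Chapter 3, Corollary 4.11]{Dem12}; moreover \cite[Chapter 3, Proposition 4.12]{Dem12} identifies this product with the current of integration, counted with multiplicities, along the complete intersection $Z_{\Sigma^{m}_{p}}$. Since this is the same mechanism employed in \cite[Lemma 3.1]{CLMM}, I would transcribe that argument verbatim to the present product-measure setting.

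The step I expect to be the main obstacle is the non-emptiness, equivalently the lower dimension bound, in part (ii). General position is only a codimension inequality and is a priori compatible with an empty intersection, so the positivity coming from \textbf{(A)} must be used in an essential way to guarantee that each $k$-fold intersection is genuinely of dimension $n-k$ rather than smaller or empty. The remaining ingredients---part (i) from Proposition \ref{bertiniseveral} and the well-definedness in part (iii) from Demailly---reduce to citations once this geometric picture is secured.
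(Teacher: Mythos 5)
Your proposal is correct and follows essentially the same route as the paper, which likewise deduces (i) from Proposition \ref{bertiniseveral}, and obtains (ii)--(iii) by transporting the argument of \cite[Lemma 3.1]{CLMM} based on Demailly's Corollary 4.11 and Proposition 4.12. Your added detail on part (ii) --- the Hauptidealsatz lower bound plus non-emptiness via ampleness from \textbf{(A)} --- is a correct fleshing-out of what the paper leaves to the citation.
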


\textbf{Acknowledgement:} We thank Turgay Bayraktar for his interest in our work and comments on the first version of this paper. We sincerely thank the referee for the time and effort devoted to a meticulous evaluation of our manuscript, and for the comments, suggestions, and corrections that improved the clarity of the present paper.

\end{document}